\documentclass{amsart}
\usepackage[utf8]{inputenc}
\usepackage{amsmath,amssymb,amsthm}
\usepackage{thm-restate}
\usepackage{url}
\usepackage{cleveref}
\usepackage[all,cmtip]{xy}
\usepackage{graphicx,color,fullpage}

\usepackage[dvipsnames]{xcolor}

\newcommand{\R}{\mathbb{R}}
\newcommand{\N}{\mathbb{N}}
\newcommand{\C}{\mathbb{C}}
\newcommand{\hyp}{\mathcal{H}}

\newtheorem{thm}{Theorem}[section]
\newtheorem{lemma}[thm]{Lemma}
\newtheorem{conj}[thm]{Conjecture}

\theoremstyle{definition}

\newtheorem{cor}[thm]{Corollary}
\theoremstyle{definition} 
\newtheorem{definition}[thm]{Definition}
\newtheorem{rem}[thm]{Remark}
\numberwithin{equation}{section}
\title{Symmetric Hyperbolic Polynomials}
\author{Grigoriy Blekherman}
\author{Julia Lindberg}
\author{Kevin Shu}

\DeclareMathOperator{\Sym}{Sym}

\DeclareMathOperator{\disc}{Disc}
\DeclareMathOperator{\argmax}{argmax}

\thanks{Grigoriy Blekherman and Kevin Shu were partially supported by NSF grant DMS-1901950.}

\begin{document}
\maketitle
\begin{abstract}
Hyperbolic polynomials have been of recent interest due to applications in a wide variety  of fields. We seek to better understand these polynomials in the case when they are symmetric, i.e. invariant under all permutations of variables. We give a complete characterization of the set of symmetric hyperbolic polynomials of degree 3, and a large class of symmetric hyperbolic polynomials of degree 4. For a class of polynomials, which we call hook-shaped, we relate symmetric hyperbolic polynomials to a class of linear maps of univariate polynomials preserving hyperbolicity, and give evidence toward a beautiful characterization of all such hook-shaped symmetric hyperbolic polynomials.
We show that hyperbolicity cones of a class of symmetric hyperbolic polynomials, including all symmetric hyperbolic cubics, are spectrahedral. Finally, we connect testing hyperbolicity of a symmetric polynomial to the degree principle for symmetric nonnegative polynomials.
\end{abstract}
\section{Introduction}
A homogeneous polynomial $p \in \R[x_1, \dots, x_n]$ is \emph{hyperbolic} with respect to some $v \in \R^n$ if $p(v) \neq 0$, and for any $x \in \R^n$, the univariate polynomial $p_x(t) = p(x + tv)$ has only real roots.
Hyperbolic polynomials are surprisingly ubiquitous in modern mathematics. They played important roles in the proofs of the Kadison-Singer conjecture \cite{marcus2015interlacing}, the Schrijver-Valiant conjecture \cite{gurvits2006hyperbolic}, and many recent results in sampling theory \cite{anari2016monte}. Moreover, hyperbolic polynomials (and the closely related stable polynomials) have inspired many fruitful generalizations, such as the notion of Lorentzian polynomials which were defined concurrently in \cite{branden2020lorentzian} and \cite{anari2018log}.

Despite their ubiquity, understanding the structure of the set of hyperbolic polynomials is difficult. It is known that hyperbolic polynomials form a contractible set in the vector space of all homogeneous polynomials in a given degree and number variables \cite{nuij1968a}.
On the other hand, it was shown in \cite{saunderson2019certifying} that it is NP-hard to determine if a given polynomial is hyperbolic with respect to a given direction $v$ (even if the degree of the polynomial is of degree 3).

Here, we consider polynomials $p$ that are \emph{symmetric}, i.e, invariant with respect to all permutations of variables, and also hyperbolic with respect to the all 1's vector $\vec{1}$.
We refer to such polynomials as \emph{symmetric hyperbolic} polynomials.

Symmetric hyperbolic polynomials are of particular interest due to \cite{bauschke2001hyperbolic}, which showed that symmetric hyperbolic polynomials can be used to construct many examples of hyperbolic polynomials.
\begin{thm}[Theorem 3.1 of \cite{bauschke2001hyperbolic}]
    Let $q(x)$ be a polynomial of degree $d$ which is hyperbolic with respect to $v \in \R^n$. Let $\lambda_1(x), \dots, \lambda_d(x) \in \R$ be the roots of the polynomial $q_x(t) = q(x + tv)$. If $p \in \R[y_1, \dots, y_d]$ is symmetric hyperbolic, then $p(\lambda_1(x), \dots, \lambda_d(x))$
    is hyperbolic with respect to $v$.
\end{thm}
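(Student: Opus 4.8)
The plan is to set $P(x) := p(\lambda_1(x),\dots,\lambda_d(x))$ and prove three things in order: (i) $P$ is a genuine homogeneous polynomial in $x$, (ii) $P(v)\neq 0$, and (iii) for every $x\in\R^n$ the univariate polynomial $P(x+tv)$ has only real roots. Step (iii) is the crux, and I expect it to follow almost formally once the right bookkeeping is in place, so the ``hard part'' is really just recognizing the correct reduction.

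For (i): since $p$ is symmetric, write $p=\Phi(e_1,\dots,e_d)$ for a polynomial $\Phi$ in the elementary symmetric functions. Writing $q_x(t)=q(x+tv)$ as a polynomial in $t$, its leading coefficient is $q(v)\neq 0$ and $q_x(t)=q(v)\prod_{i=1}^d\bigl(t-\lambda_i(x)\bigr)$, so $e_k(\lambda_1(x),\dots,\lambda_d(x))$ equals, up to the nonzero scalar $q(v)$, the coefficient of $t^{d-k}$ in $q(x+tv)$, which is a polynomial in $x$. Hence $P(x)=\Phi\bigl(e_1(\lambda(x)),\dots,e_d(\lambda(x))\bigr)\in\R[x_1,\dots,x_n]$. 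Homogeneity comes from the identity $q(sx+tv)=s^d q\bigl(x+(t/s)v\bigr)$, which shows $\lambda_i(sx)=s\,\lambda_i(x)$; since $p$ is homogeneous, $P(sx)=s^{\deg p}P(x)$.

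For (ii): $q_v(t)=q\bigl((1+t)v\bigr)=(1+t)^d q(v)$, so $-1$ is the only root, i.e.\ $\lambda_i(v)=-1$ for all $i$, and therefore $P(v)=p(-1,\dots,-1)=(-1)^{\deg p}p(\vec 1)\neq 0$ because $p$ is hyperbolic with respect to $\vec 1$.

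For (iii): fix $x\in\R^n$ and let $\mu_1,\dots,\mu_d$ be the roots of $q_x(t)=q(x+tv)$, all of which are real since $q$ is hyperbolic with respect to $v$. From $q_{x+tv}(s)=q\bigl(x+(t+s)v\bigr)=q_x(t+s)$ we see that the roots of $q_{x+tv}$ in $s$ are exactly $\mu_j-t$, $j=1,\dots,d$. Hence $P(x+tv)=p(\mu_1-t,\dots,\mu_d-t)=p(\mu-t\vec 1)$ where $\mu=(\mu_1,\dots,\mu_d)\in\R^d$. Since $p$ is hyperbolic with respect to $\vec 1$, the polynomial $t\mapsto p(\mu+t\vec 1)$ has only real roots, and replacing $t$ by $-t$ shows the same for $P(x+tv)$. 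Together with (ii), this says precisely that $P$ is hyperbolic with respect to $v$. The only subtlety worth flagging is that symmetry of $p$ is exactly what is needed in step (i) to guarantee $P$ is a polynomial at all, and the translation identity $q_{x+tv}(s)=q_x(t+s)$ is what turns a restriction of $P$ along $v$ into a restriction of $p$ along $\vec 1$.
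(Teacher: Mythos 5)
Your proof is correct, and it is essentially the standard argument for this theorem (which the paper cites from Bauschke--G\"uler--Lewis--Sendov without reproducing a proof). The three steps are exactly what is needed: polynomiality via the elementary symmetric functions of the $\lambda_i$ being (up to the nonzero scalar $q(v)$) the coefficients of $q_x$, nonvanishing at $v$ via $\lambda_i(v)=-1$, and the translation identity $q_{x+tv}(s)=q_x(t+s)$ reducing real-rootedness of $P$ along $v$ to real-rootedness of $p$ along $\vec 1$, with the symmetry of $p$ making the unordered multiset of roots the only thing that matters.
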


Some of our results concern the \emph{hyperbolicity cone} of a hyperbolic polynomial. The hyperbolicity cone of a polynomial $p$ hyperbolic with respect to $v$ is the set
\[
    H_v(p) = \{x \in \R^n : p(x+tv) \neq 0 \text{ for all }t>0\}.
\]
It was shown in \cite{gaarding1959inequality} that the hyperbolicity cone of a hyperbolic polynomial is always convex. The Generalized Lax conjecture states that the hyperbolicity cone of every hyperbolic polynomial is spectrahedral, i.e. that  for some $m \in \N$, there exist matrices $A_1, \dots, A_n \in \Sym(\R^m)$ so that
\[
    H_v(p) = \{x \in \R^n : \sum_{i=1}^n A_i x_i \succeq 0\}.
\]
The Lax conjecture stated that every hyperbolic polynomial in 3 variables has a \emph{definite determinantal representation}, meaning that the polynomial is of the form $p(x) = \det(\sum_{i=1}^n A_i x_i)$, for some symmetric matrices satisfying $\sum_{i=1}^n A_i v_i \succ 0$.
The Lax conjecture was proved by Helton and Vinnikov in \cite{helton2007linear}, and this implies the Generalized Lax Conjecture for 3 variable polynomials.
There are hyperbolic polynomials, such as the elementary symmetric polynomials, which have no such definite determinantal representation.

One reason for the interest in the hyperbolicity cones of polynomials is that they are linear slices of the cone of nonnegative polynomials, and they can be used to certify that other polynomials are nonnegative as shown in \cite{kummer2015hyperbolic}.
Concretely, if $u,w \in H_v(p)$, then the \emph{mixed derivative} $$\Delta_{uw}p = D_u p D_wp - pD_{uw}p$$  is globally nonnegative, where $D_up$ denotes the directional derivative of $p$ in the $u$ direction. A polynomial $p$ is said to be \emph{weakly SOS-hyperbolic} (a phrase coined in \cite{saunderson2019certifying}) if $\Delta_{uw} p$ is a sum of squares for all $u,w \in H_v(p)$. If any power of $p$ has a determinantal representation, then $p$ is SOS-hyperbolic \cite[Proposition 4.7]{saunderson2019certifying}.

There has been much work concerning the elementary symmetric polynomials, defined as\\ $e_k = \sum_{\atop{S \subseteq [n],}{|S| = k}} \prod_{i \in S} x_i$.
For example, it was shown in \cite{zinchenko2008hyperbolicity} that the hyperbolicity cone of of $e_{n-1}$ is a spectrahedral shadow, which was improved in \cite{sanyal2013derivative} to show that the hyperbolicity cone of $e_{n-1}$ is in fact spectrahedral. These results were generalized in \cite{branden2014hyperbolicity} where it was shown that the hyperbolicity cone of $e_k$ is spectrahedral for any $1 \le k \le n$, and this spectrahedral representation was simplified and extended to other polyomials in \cite{kummer2021spectral}.

Our contributions to the study of symmetric hyperbolic polynomials include a connection between symmetric hyperbolic polynomials and the theory of univariate hyperbolicity preservers originally due to Polya and Schur in \cite{schur1914zwei}. This connection allows us to neatly characterize symmetric hyperbolic polynomials of degree three, as well as symmetric hyperbolic polynomials of degree four whose expansion into elementary symmetric polynomials only involves hook-shaped partitions. We call such symmetric polynomials \emph{hook-shaped polynomials}. In degree five, we exhibit interesting hook-shaped hyperbolic polynomials which are counterexamples to some generalizations of our results in degrees 3 and 4. We offer a compelling conjecture for a characterization of hook-shaped hyperbolic polynomials of arbitrary degree, and extensive computational evidence supporting the conjecture.

In addition, we show that a class of symmetric hyperbolic polynomials, including all symmetric cubic hyperbolic polynomials, have spectrahedral hyperbolicity cones. We also show a version of the degree principle, as stated in \cite{timofte2003positivity} that applies to hyperbolic polynomials.

\subsection{Main Results in Detail}
While we will give some general results about symmetric hyperbolic polynomials, including the connection to degree principle, in \Cref{sec:general} our work focuses on \emph{hook-shaped polynomials} which form a linear subspace of symmetric hyperbolic polynomials.
\begin{definition}  
A homogeneous symmetric polynomial of degree $d$ is \emph{hook-shaped} if it is of the form
\[
    p(x) = \sum_{i=1}^d a_i e_1^{d-i}(x) e_{i}(x),
\]
where $e_i$ is the elementary symmetric polynomial of degree $i$, and $a_i \in \R$ for $i = 1,\dots,d$. We denote by $\Gamma_{n,d}$ the vector space of hook-shaped symmetric polynomials in $n$ variables of degree $d$.
\end{definition}  

The term `hook-shaped' originates from the study of partitions, as these basis polynomials correspond to hook-shaped partitions of $d$. Note that all cubic symmetric polynomials are hook-shaped.

Hook-shaped polynomials can be associated to certain linear maps between vector spaces of univariate polynomials.
We outline this connection after defining a subspace of the space of univariate polynomials, which we can think of as being those polynomials of degree $n$ whose roots sum to 0.
\begin{definition}  
\label{def:zero_sum_polys}
Let $\R[t]_{n,0}$ denote the vector space of univariate polynomials of degree at most $n$ with the coefficient of $t^{n-1}$ equal to 0. Let $\hyp_{n,0}$ denote the set of polynomials in $\R[t]_{n,0}$ with only real roots.
\end{definition}  
Note that $\R[t]_{n,0}$ contains all polynomials of degree $ \le n-2$.

\begin{definition}  
A linear map $T: \R[t]_{n,0} \rightarrow \R[t]_{d,0}$ is a \emph{0-sum hyperbolicity preserver} if $T(\hyp_{n,0}) \subseteq \hyp_{d,0}$.
The map $T$ is called diagonal if there exist $\gamma_1, \dots, \gamma_d \in \R$ so that $T(t^{n-k}) = \gamma_k t^{d-k}$ for all $k = 0\dots d$, and $T(t^{n-k}) = 0$ for $k > d$. 
\end{definition}  
\begin{definition}  
Let $p \in \Gamma_{n,d}$. The \emph{associated operator} to $p$ is the function $T : \R[t]_{n,0} \rightarrow \R[t]_{d,0}$ defined as follows.
If $g(t) \in \R[t]_{n,0}$ is a polynomial so that 
\[g(t) = a(t-r_1)(t-r_2)\dots(t-r_n),\]
for $r_1, \dots, r_n \in \C$, $a \in \R \neq 0$ then we let  $T(g)(t) = ap(\vec{r} - \vec{1}t)$,
where $\vec{r}$ denotes the vector whose entries are the roots $r_i$. We then extend this definition to all $g\in \R[t]_{n,0}$ (including those of degree less than $n$) by continuity.
\end{definition}  
Although it is not immediately clear from the definition, we establish that for any $p \in \Gamma_{n,d}$ the associated operator $T$ is a diagonal linear map, and $T$ is a $0$-sum hyperbolicity preserver if and only if $p$ is symmetric hyperbolic.
\begin{restatable}{thm}{hookop}
\label{thm:hookop}
Let $p \in \Gamma_{n,d}$, with the associated operator $T$. Then $T$ is a diagonal linear map.
Moreover, $p$ is symmetric hyperbolic if and only if $T$ is a 0-sum hyperbolicity preserver. The map sending a hook-shaped polynomial to its associated operator is linear and invertible. 
\end{restatable}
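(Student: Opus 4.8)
The plan is to make the associated operator $T$ completely explicit by evaluating $p$ along the line $\vec r-t\vec1$, and then to read off all three assertions from the resulting formula. Two facts drive the computation. First, if $g(t)=a(t-r_1)\cdots(t-r_n)\in\R[t]_{n,0}$, then the vanishing of the $t^{n-1}$-coefficient says exactly $\sum_i r_i=e_1(\vec r)=0$, whence $e_1(\vec r-t\vec1)=e_1(\vec r)-nt=-nt$. Second, the classical shift expansion
\[
 e_i(\vec r-t\vec1)=\sum_{j=0}^{i}(-1)^{i-j}\binom{n-j}{i-j}\,t^{\,i-j}\,e_j(\vec r),
\]
obtained from $\prod_k\bigl(z-(r_k-t)\bigr)=\prod_k\bigl((z+t)-r_k\bigr)$ by comparing coefficients of $z^{n-i}$; since $e_1(\vec r)=0$, the $j=1$ term of this expansion drops out.

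Substituting both facts into $p(\vec r-t\vec1)=\sum_{i=1}^d a_i(-nt)^{d-i}e_i(\vec r-t\vec1)$ and collecting powers of $t$, one gets $p(\vec r-t\vec1)=\sum_{j}\mu_j\,e_j(\vec r)\,t^{\,d-j}$, the sum running over $j=0$ and $j=2,\dots,d$ (there is no $j=1$), where each $\mu_j$ is an explicit scalar depending only on $n$, $d$, and the $a_i$; in particular no $t^{d-1}$ term survives, so $T(g)=a\,p(\vec r-t\vec1)\in\R[t]_{d,0}$. Writing $g(t)=\sum_k b_k t^{n-k}$ we have $a\,e_j(\vec r)=(-1)^j b_j$ and $b_1=0$, so
\[
 T(g)(t)=\sum_{k}\gamma_k\,b_k\,t^{\,d-k},\qquad \gamma_k=(-1)^k\mu_k,
\]
with $\gamma_k=0$ for $k>d$. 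The right-hand side is linear and continuous in $g$, so it agrees with the ``extend by continuity'' definition on all of $\R[t]_{n,0}$; hence $T$ is well defined, linear, and diagonal with the stated $\gamma_k$. For invertibility of $p\mapsto T$: for $j\ge2$ the relation $\mu_j=a_j+(\text{a linear combination of }a_{j+1},\dots,a_d)$ is unitriangular, and $\mu_0$ then recovers $a_1$ with the nonzero coefficient $(-n)^d$; so the $d\times d$ change of basis between $(a_1,\dots,a_d)$ and $(\gamma_0,\gamma_2,\dots,\gamma_d)$ is invertible. Linearity of $p\mapsto T$ is immediate from $T(g)=a\,p(\vec r-t\vec1)$.

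It remains to prove the hyperbolicity equivalence. Since $p$ is homogeneous, $p(x+s\vec1+t\vec1)=p\bigl(x+(s+t)\vec1\bigr)$, so real-rootedness of $t\mapsto p(x+t\vec1)$ depends only on the affine line $x+\R\vec1$, and each such line meets the hyperplane $\{e_1=0\}$ in a single point. Hence $p$ is hyperbolic with respect to $\vec1$ if and only if $p(\vec r-t\vec1)$ is real-rooted for every $\vec r$ with $e_1(\vec r)=0$, i.e. if and only if $T(g)$ is real-rooted for every $g\in\hyp_{n,0}$ of degree exactly $n$ (these are, up to a nonzero scalar, exactly the $g(t)=\prod_i(t-r_i)$ with $\sum_i r_i=0$). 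One direction is then clear: a $0$-sum hyperbolicity preserver certainly sends such $g$ into $\hyp_{d,0}$. For the converse I would show the degree-$n$ elements are dense in $\hyp_{n,0}$: given a real-rooted $g$ of degree $m\le n-2$ with root sum $S$, the polynomials
\[
 \frac{(-1)^{n-m-1}}{n-m-1}\,\varepsilon^{\,n-m}\,g(t)\,(t-1/\varepsilon)^{\,n-m-1}\bigl(t+S+(n-m-1)/\varepsilon\bigr)
\]
have degree $n$, only real roots, root sum $0$, and converge to $g$ as $\varepsilon\to0$; since $T$ is continuous and $\hyp_{d,0}$ is closed, real-rootedness of $T$ on this dense subset propagates to all of $\hyp_{n,0}$. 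This gives $p$ symmetric hyperbolic $\iff$ $T$ a $0$-sum hyperbolicity preserver. (In the converse direction one also uses $p(\vec1)\ne0$, which is automatic once $p$ is hyperbolic and corresponds to $\gamma_0\ne0$.) I expect the only genuinely technical point to be this density/approximation step; everything else is bookkeeping around the shift identity.
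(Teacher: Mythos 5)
Your proof is correct and follows essentially the same route as the paper: expand $p(\vec r-t\vec 1)$ via the shift identity, use $e_1(\vec r)=0$ to see $T$ is diagonal, deduce invertibility from an upper-triangular coefficient system (a minor slip: the diagonal entries are $(-n)^{d-j}$, not $1$, so the system is triangular with nonzero diagonal rather than unitriangular, but the conclusion is the same), and reduce hyperbolicity of $p$ to real-rootedness of $T$ on the hyperplane $e_1=0$. The one place you are more explicit than the paper is the density step for low-degree inputs, where the paper just invokes ``homogeneity and continuity of $T$'' and you supply a concrete real-rooted, zero-sum, degree-$n$ approximating family.
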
 

In light of this theorem, we study symmetric hyperbolic polynomials in terms of diagonal 0-sum hyperbolicity preservers.
We compare the 0-sum hyperbolicity preservers to the set of \emph{hyperbolicity preservers}, which are simply maps $T :\R[t]_n \rightarrow \R[t]_d$ that preserve real rootedness for all univariate polynomials of degree at most $n$.
The diagonal hyperbolicity preservers were already characterized in the work of Schur and Polya in \cite{schur1914zwei}. This work was extended in various ways in subsequent work, culminating in the work of Borcea and Brand\"en in \cite{borcea2009lee, borcea2009polya}, which characterized not only (nondiagonal) linear hyperbolicity preservers of univariate polynomials, but also linear hyperbolicity preservers of multivariate multiaffine polynomials as well.

\begin{definition} 
Let $T : \R[t]_{n,0} \rightarrow  \R[t]_{d,0}$ be a diagonal, 0-sum hyperbolicity preserver. We say $T$ is \emph{extendable} if there exists a diagonal linear map $\hat{T} : \R[t]_{n} \rightarrow  \R[t]_{d}$ so that $\hat{T}$ is a hyperbolicity preserver, and $T$ is the restriction of $\hat{T}$ to $\R[t]_{n,0}$.
\end{definition} 

The following result shows that diagonal, $0$-sum hyperbolicity preservers are always extendable to full hyperbolicity preservers only if the degree $d$ is at most $4$, and there exist non-extendable $0$-sum hyperbolicity preservers if the degree is at least $5$.

\begin{thm} 
\label{thm:extendable}
Let $T:\R[t]_{n,0} \rightarrow  \R[t]_{d,0}$ be a 0-sum hyperbolicity preserver. If $d \le 4$, then $T$ is extendable, and moreover, this is the case if and only if $T((x-1)^{n-1}(x+n-1))$
has real roots, where $d-1$ of them have the same sign.

If $d \ge 5$, then there exists a 0-sum hyperbolicity preserver which is not extendable.
\end{thm}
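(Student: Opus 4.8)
The plan is to handle the two regimes separately, using \Cref{thm:hookop} to translate between $0$-sum hyperbolicity preservers and the diagonal multipliers $(\gamma_0,\dots,\gamma_d)$, and then to lean on the classical Pólya–Schur characterization of diagonal hyperbolicity preservers on $\R[t]_n$. Recall that a diagonal map $\hat T(t^{n-k}) = \gamma_k \binom{n}{k} t^{d-k} / \binom{d}{k}$-type normalization turns into a \emph{multiplier sequence}, and the Pólya–Schur theory tells us exactly which finite sequences $(\gamma_0,\dots,\gamma_d)$ extend real-rootedness: up to the usual reversal/scaling normalizations these are governed by the sign-regularity of the $\gamma_i$ (equivalently, $\sum \gamma_k \binom{d}{k} x^k y^{d-k}$ must itself be real-rooted/hyperbolic in two variables). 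So ``extendable'' is really a statement about the finite sequence $(\gamma_0,\dots,\gamma_d)$, while ``$0$-sum hyperbolicity preserver'' constrains that sequence only through its action on polynomials whose $t^{n-1}$-coefficient vanishes. The gap between the two is precisely the freedom in the coefficient $\gamma_1$ (the one attached to $t^{d-1}$), since that coefficient is invisible when testing on $\R[t]_{n,0}$ after we have normalized — the image automatically lands in $\R[t]_{d,0}$ only because of how the associated operator is built, and choosing a bad $\gamma_1$ is exactly what can break extendability.

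For the case $d \le 4$: first I would reduce to showing that if $T$ preserves $0$-sum hyperbolicity then the associated finite sequence $(\gamma_0,\dots,\gamma_d)$ is (after the standard normalization) a multiplier sequence, hence extends. The key test polynomial is $(t-1)^{n-1}(t+n-1)$, whose roots are the coordinates of $\vec 1$ perturbed to lie in the hyperplane $\sum r_i = 0$; more precisely its roots, shifted, are $n-1$ copies of one value and one copy of another, so it is a ``generic extreme ray'' of $\hyp_{n,0}$. Applying $T$ to it and demanding real-rootedness with a $(d-1,1)$ sign split is, I claim, equivalent to the hyperbolicity in two variables of the binary form $\sum_k \gamma_k \binom{d}{k} x^k y^{d-k}$ — this is a $\le 4$-dimensional algebra-of-signs computation (discriminant conditions on a degree $\le 4$ univariate polynomial), and for $d \le 4$ the Pólya–Schur criterion for a sequence of length $\le 5$ to be a multiplier sequence coincides with that same discriminant/sign-regularity condition. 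So the chain is: $T$ preserves $0$-sum hyperbolicity $\Rightarrow$ (test on $(t-1)^{n-1}(t+n-1)$) $\Rightarrow$ the stated root condition $\Rightarrow$ $(\gamma_k)$ is a multiplier sequence $\Rightarrow$ $\hat T$ exists; and each implication is reversible, giving the ``if and only if''. The direction showing the root condition is \emph{necessary} is immediate since $(t-1)^{n-1}(t+n-1) \in \hyp_{n,0}$; the substance is that it is \emph{sufficient}, which is where the low-degree Pólya–Schur classification does the work.

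For the case $d \ge 5$: the plan is an explicit construction. I would take a diagonal sequence $(\gamma_0,\dots,\gamma_d)$ that acts correctly (preserves real-rootedness) on the codimension-one subspace $\hyp_{n,0}$ but is not a multiplier sequence on all of $\R[t]_n$ — concretely, start from a genuine multiplier sequence of length $d+1$, then modify $\gamma_1$ (the coefficient that is ``free'' on $\R[t]_{n,0}$) to a value that violates the Pólya–Schur sign conditions for the full sequence while still, by \Cref{thm:hookop}, corresponding to a hook-shaped symmetric hyperbolic $p$. To verify the $0$-sum preservation one uses that every $g \in \hyp_{n,0}$ is a limit of products of the extreme-ray polynomials $(t-a)^{n-1}(t - b)$ with $(n-1)a + b = 0$, together with the Bauschke–Hoffman composition theorem quoted in the excerpt (symmetric hyperbolic composed with hyperbolic is hyperbolic), so it suffices to check hyperbolicity of $p$ itself, which can be done by exhibiting a determinantal or spectrahedral certificate in degree $5$. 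To verify non-extendability, one shows the only candidate $\hat T$ — its multipliers are forced to be the $\gamma_k$ — fails Pólya–Schur, e.g. $\hat T$ applied to $(1+t)^d$ gives a binary form with a complex root pair. The main obstacle I anticipate is the $d \ge 5$ construction: one needs a $5$-term sequence that is simultaneously ``good on the hyperplane'' and ``bad globally,'' and proving the former rigorously (rather than just numerically) requires a clean description of the extreme rays of $\hyp_{n,0}$ and a monotonicity/interlacing argument for how $T$ acts on them — I expect this to be the technically heaviest part, whereas the $d \le 4$ half is essentially bookkeeping on top of the classical multiplier-sequence theorem.
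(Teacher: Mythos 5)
Your high-level framing is right — the problem is to lift a sequence $(\gamma_0,\gamma_2,\dots,\gamma_d)$, visible through the action on $\R[t]_{n,0}$, to a full multiplier sequence $(\gamma_0,\gamma_1,\dots,\gamma_d)$, and P\'olya--Schur governs the latter. But both halves of your plan have substantive gaps.

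\emph{The $d\ge 5$ construction is wrong.} You propose to start with a genuine multiplier sequence $(\gamma_0,\dots,\gamma_d)$ and then ``modify $\gamma_1$'' to break the P\'olya--Schur condition. But $\gamma_1$ is precisely the coordinate that does not affect the restriction of $\hat T$ to $\R[t]_{n,0}$ (the target lands in $\R[t]_{d,0}$, so the $t^{d-1}$ coefficient is killed). Modifying $\gamma_1$ therefore produces a \emph{different} diagonal map on $\R[t]_n$ with the \emph{same} restriction $T$, and this $T$ is still extendable --- by the original, unmodified multiplier sequence. To produce a non-extendable $T$ you must show that \emph{no} choice of $\gamma_1$ yields a multiplier sequence, which requires a genuine obstruction. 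The paper supplies one via \Cref{lem:mult1}: choosing $T$ so that $T(g_0) = (t-1)^2(t-2)^2(t+6)$, any preimage $f$ under $\delta_5$ would need roots of multiplicity $3$ at both $1$ and $2$ (\Cref{lem:mult1} forces $\delta_5$ to drop multiplicity by exactly one at nonzero roots), which is impossible in degree $5$. Separately, the fact that this $T$ actually preserves $0$-sum hyperbolicity is established by proving that the quintic $p = 6\tilde e_5 - \tfrac{22}{3}\tilde e_1\tilde e_4 + \tfrac{7}{3}\tilde e_1^2\tilde e_3$ is symmetric hyperbolic (via an SOS certificate for $\Delta_{\vec1\vec1}p$ and an interlacing argument), not via any decomposition of $\hyp_{n,0}$ into extreme rays composed with the Bauschke et al.\ theorem.

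\emph{The $d\le 4$ step ``root condition $\Rightarrow$ multiplier sequence'' is asserted, not proven.} Necessity of the root condition is indeed easy (the paper's \Cref{lem:necessary}). Sufficiency is the whole theorem: given $g = T(g_0)$ real-rooted with $d-1$ roots of one sign, you must produce a degree-$d$ polynomial $f$ with all roots of the same sign satisfying $\delta_d(f) = g$ (equivalently, choose the missing $\gamma_1$ so that the full sequence is a multiplier sequence). Your proposal waves at ``discriminant conditions on a degree $\le 4$ polynomial'' but does not exhibit the lift or prove it exists. The paper's proof here is genuinely nontrivial: it reformulates the lifting problem as surjectivity of a continuous map $\phi : A_4 \to A_3$ between root simplices (\Cref{lem:rhosurj}) and proves surjectivity topologically, by lifting a boundary loop of $A_3$ to $A_4$ and using that $A_4$ is simply connected while a non-contractible loop in $A_3 \setminus \{x\}$ cannot have a lift if $x$ is omitted from the image. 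Your sketch contains no substitute for this surjectivity argument, and it is exactly the step where the theorem's content lives (indeed, it is where the argument breaks for $d\ge 5$).
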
 

Using \Cref{thm:extendable} we can immediately obtain characterizations of hook-shaped symmetric hyperbolic polynomials of degree at most $4$. We first state the result for cubics:
\begin{cor}
Let $p$ be a cubic, symmetric polynomial. $p$ is symmetric hyperbolic if and only if $p(u + \vec{1}t)$ is real rooted, where $u$ is any coordinate vector.
\end{cor}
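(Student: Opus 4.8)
The plan is to specialize the hook-shaped machinery to the cubic case. Since every cubic symmetric polynomial is hook-shaped, we have $p \in \Gamma_{n,3}$, so by \Cref{thm:hookop} the associated operator $T : \R[t]_{n,0} \to \R[t]_{3,0}$ is a diagonal linear map, and $p$ is symmetric hyperbolic if and only if $T$ is a $0$-sum hyperbolicity preserver. Because $d = 3 \le 4$, \Cref{thm:extendable} tells us that a diagonal $0$-sum linear map $T$ is a hyperbolicity preserver precisely when $T\big((x-1)^{n-1}(x+n-1)\big)$ has real roots, two of which have the same sign. So it suffices to (i) identify $T(g)$, for $g := (x-1)^{n-1}(x+n-1)$, with $p(u + \vec{1}t)$ up to an invertible affine reparametrization and a nonzero scalar, and (ii) observe that the two-equal-signs condition is automatic in degree three.

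For (i), note that $g$ is monic of degree $n$ with roots $1$ (multiplicity $n-1$) and $-(n-1)$, which sum to $0$, so $g \in \hyp_{n,0}$ and the definition of the associated operator applies verbatim with leading coefficient $a = 1$. Writing $\vec r = (1,\dots,1,-(n-1))$ for the vector of roots of $g$, the vector $\vec r - \vec{1}t$ has its first $n-1$ entries equal to $1-t$ and last entry equal to $-(n-1)-t$, so
\[
    \vec r - \vec{1}t = (1-t)\vec{1} - n u ,
\]
where $u$ is the $n$-th coordinate vector. Using symmetry of $p$ (to replace $u$ by an arbitrary coordinate vector) together with homogeneity of degree $3$,
\[
    T(g)(t) = p(\vec r - \vec{1}t) = p\big((1-t)\vec{1} - n u\big) = -n^{3}\, p\!\left(u + \tfrac{t-1}{n}\,\vec{1}\right).
\]
Since $t \mapsto \tfrac{t-1}{n}$ is an invertible affine self-map of $\R$, the polynomial $T(g)$ has only real roots if and only if $p(u + s\vec{1})$ does; and by symmetry of $p$ this condition is independent of the chosen coordinate vector $u$.

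For (ii), if $p(u+s\vec{1})$ — equivalently $T(g)$ — has three real roots, then among any three real numbers at least two share a sign, so the requirement in \Cref{thm:extendable} is met automatically; if instead $p(u+s\vec{1})$ has degree at most one it is trivially real rooted, and since $T(g) \in \R[t]_{3,0}$ it cannot have degree exactly two, so no further cases arise. Chaining the equivalences, $p$ is symmetric hyperbolic $\iff$ $T$ is a $0$-sum hyperbolicity preserver $\iff$ $T(g)$ is real rooted $\iff$ $p(u+\vec{1}t)$ is real rooted, which is the assertion. The only thing that requires care is the bookkeeping in step (i): getting the affine substitution and the factor $-n^{3}$ correct, and confirming that the reduction to a single coordinate vector uses nothing beyond symmetry and homogeneity; once \Cref{thm:hookop} and \Cref{thm:extendable} are available, no step is a genuine obstacle.
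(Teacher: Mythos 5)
Your proposal is correct and follows exactly the route the paper intends: the paper states this corollary immediately after \Cref{thm:extendable} with no written proof, treating it as an instance of the $d\le 4$ case together with the observations that every symmetric cubic is hook-shaped (so \Cref{thm:hookop} applies) and that the sign condition is vacuous in degree three. Your explicit identification $T(g)(t) = -n^3\,p\bigl(u + \tfrac{t-1}{n}\vec{1}\bigr)$ via the root vector $\vec r = (1,\dots,1,-(n-1))$ and the pigeonhole observation that among three real numbers (here summing to zero and in $\R[t]_{3,0}$, so either of degree $3$ or of degree $\le 1$) at least two are simultaneously nonnegative or simultaneously nonpositive are precisely the two pieces the paper leaves to the reader, and both are handled correctly.
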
 
Note that this enables us to check that a cubic symmetric polynomial is hyperbolic by checking if a single univariate polynomial is hyperbolic. 
As an immediate consequence we can give a semi-algebraic description of all symmetric hyperbolic cubics.
\begin{cor}
    Let $p \in \R[x_1,\ldots,x_n]_3$ be a hyperbolic cubic polynomial of the form
    \[ p = a \Tilde{e_1}^3 + b \Tilde{e_1} \Tilde{e_2}+ c \Tilde{e_3} \]
    where $\Tilde{e_k}(x) = \binom{n}{k}^{-1} e_k^n$. Then $p$ is hyperbolic if and only if $(a+b+c) \left(27 a c^2-b^3-9 b^2 c\right) \le 0$.
\end{cor}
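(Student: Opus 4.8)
The plan is to convert hyperbolicity into the real-rootedness of a single explicit univariate cubic and then extract the inequality from the classical cubic discriminant. By the preceding corollary, the symmetric cubic $p$ is hyperbolic with respect to $\vec{1}$ if and only if $p(u + \tau\vec{1})$ is real rooted (in $\tau$) for $u$ any coordinate vector; I would take $u$ to be the first standard basis vector, so that $u + \tau\vec{1} = (1+\tau, \tau, \dots, \tau)$. First I would record the restrictions of the normalized elementary symmetric polynomials to this line: extracting the coefficient of $z^k$ in the generating identity $\prod_{i=1}^n(1 + y_i z) = (1 + (1+\tau)z)(1 + \tau z)^{n-1}$ gives $e_k(1+\tau,\tau,\dots,\tau) = \binom{n}{k}\tau^k + \binom{n-1}{k-1}\tau^{k-1}$, hence $\tilde{e}_k(1+\tau,\tau,\dots,\tau) = \tau^k + \tfrac{k}{n}\tau^{k-1}$. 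Writing $s = 1/n$ and substituting, $p(u+\tau\vec{1})$ becomes the cubic $(a+b+c)\tau^3 + 3s(a+b+c)\tau^2 + s^2(3a+2b)\tau + as^3$.

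Rather than expand the five-term discriminant of this cubic directly, I would first depress it via the substitution $\tau = t - s$, which is precisely the shift that removes the quadratic term (its coefficient being $3s$ times the leading one). Plugging $\tau = t-s$ into $\tilde{e}_k(1+\tau,\tau,\dots,\tau) = \tau^k + ks\tau^{k-1}$ gives the clean values $\tilde{e}_1 = t$, $\tilde{e}_2 = t^2 - s^2$, $\tilde{e}_3 = t^3 - 3s^2 t + 2s^3$, so
\[
  p\bigl(u + (t-s)\vec{1}\bigr) = (a+b+c)\,t^3 - s^2(b + 3c)\,t + 2cs^3 .
\]
Since the discriminant of a univariate polynomial is invariant under translating the variable, the discriminant of $p(u+\tau\vec{1})$ equals that of this depressed cubic, namely $-4(a+b+c)\bigl(-s^2(b+3c)\bigr)^3 - 27(a+b+c)^2(2cs^3)^2$, which simplifies to
\[
  \disc\bigl(p(u+\tau\vec{1})\bigr) = -4s^6\,(a+b+c)\bigl(27ac^2 - b^3 - 9b^2 c\bigr).
\]

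Finally, since $p(\vec{1}) = a+b+c$, hyperbolicity with respect to $\vec{1}$ forces $a+b+c\neq 0$, so $p(u+\tau\vec{1})$ is a genuine cubic, and a real cubic is real rooted if and only if its discriminant is nonnegative. As $s^6 > 0$, the condition $\disc\bigl(p(u+\tau\vec{1})\bigr) \ge 0$ is exactly $(a+b+c)(27ac^2-b^3-9b^2c)\le 0$, which is the assertion. There is no serious obstacle here: all the conceptual content is carried by the preceding corollary, and the only computation is the one-line simplification of the depressed-cubic discriminant, the depression step being what keeps it short. The one point to be careful about is the boundary case $a+b+c=0$, where $p(\vec{1})=0$ and $p$ fails to be hyperbolic with respect to $\vec{1}$; this is implicitly excluded from the statement, and it is why one may assume the univariate polynomial has degree exactly $3$ before invoking the discriminant criterion.
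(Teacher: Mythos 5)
Your proof is correct and follows the same route as the paper's: reduce to real-rootedness of $p(u + t\vec{1})$ via the preceding corollary, compute the discriminant of that univariate cubic, and invoke the criterion that a cubic is real rooted iff its discriminant is nonnegative. The only differences are cosmetic — you carry out the ``direct computation'' explicitly via the depression $\tau = t - 1/n$ (your discriminant retains the harmless positive factor $n^{-6}$ that the paper suppresses), and you usefully flag that $a+b+c = 0$ must be excluded since then $p(\vec{1}) = 0$.
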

\begin{proof}
   Let $u = [1,0,\ldots,0]^T$. By direct computation we see:
    \[
        \disc(p(u + t \vec{1})) =  
        -4(a+b+c) \left(27 a c^2-b^3-9 b^2 c\right).
    \]
    Since a cubic polynomial is real rooted if and only if its discriminant is nonnegative, we obtain the result.
\end{proof}

\Cref{thm:extendable} also immediately characterizes hook-shaped quartic polynomials.
\begin{cor} 
\label{thm:quartics}
Let $p \in \Gamma_{n,4}$. 
Let $q(t) = p(u + \vec{1}t)$, where $u$ is a coordinate vector.
$p$ is symmetric hyperbolic if and only if $q(t-\frac{1}{n})$ is real rooted, with at least $3$ roots having the same sign.
\end{cor}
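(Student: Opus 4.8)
The plan is to reduce the statement to \Cref{thm:extendable} via \Cref{thm:hookop} and then carry out one explicit computation. By \Cref{thm:hookop}, $p \in \Gamma_{n,4}$ is symmetric hyperbolic if and only if its associated operator $T \colon \R[t]_{n,0} \rightarrow \R[t]_{4,0}$ is a $0$-sum hyperbolicity preserver, and moreover $T$ is automatically diagonal. Since $d = 4 \le 4$, \Cref{thm:extendable} then says that $T$ is a $0$-sum hyperbolicity preserver if and only if the test polynomial $T\big((x-1)^{n-1}(x+n-1)\big)$ has only real roots, with $d-1 = 3$ of them of the same sign. So the only remaining task is to identify this test polynomial, up to an increasing affine reparametrization of the variable and a positive scalar multiple, with $q(t-\tfrac1n)$.

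For the computation, note that $g(x) := (x-1)^{n-1}(x+n-1)$ has leading coefficient $1$ and roots $r_1 = \dots = r_{n-1} = 1$ and $r_n = 1-n$, whose sum is $0$, so indeed $g \in \R[t]_{n,0}$. By the definition of the associated operator, $T(g)(t) = p(\vec{r} - \vec{1}\,t)$, and the vector $\vec{r} - \vec{1}\,t$ has $n-1$ coordinates equal to $1-t$ and one equal to $1-t-n$; that is, writing $u$ for the corresponding coordinate vector, $\vec{r} - \vec{1}\,t = (1-t)\vec{1} - n u = -n\big(\tfrac{t-1}{n}\vec{1} + u\big)$. Since $p$ is homogeneous of degree $4$ and symmetric (so the particular coordinate vector $u$ is immaterial, both here and in the statement),
\[
    T(g)(t) = (-n)^4\, p\Big(\tfrac{t-1}{n}\vec{1} + u\Big) = n^4\, q\Big(\tfrac{t-1}{n}\Big),
\]
where $q(s) = p(u + \vec{1}\,s)$ as in the corollary.

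It remains to translate the root data. The maps $t \mapsto \tfrac{t-1}{n}$ and $t \mapsto t/n$ are increasing affine bijections of $\R$, so $T(g)$ is real rooted if and only if $q$ is, if and only if the shift $q(t-\tfrac1n)$ is. Moreover $t_0$ is a root of $T(g)$ exactly when $s_0 := \tfrac{t_0-1}{n}$ is a root of $q$, equivalently exactly when $s_0 + \tfrac1n = \tfrac{t_0}{n}$ is a root of $q(t-\tfrac1n)$; since $n > 0$, the numbers $t_0$ and $\tfrac{t_0}{n}$ have the same sign, so the (multi)set of roots of $T(g)$ and that of $q(t-\tfrac1n)$ have identical sign patterns. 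Hence $T(g)$ is real rooted with at least $3$ roots of the same sign if and only if $q(t-\tfrac1n)$ is, and combining this with the previous two paragraphs completes the proof.

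Since the two cited theorems do the heavy lifting, there is no substantial obstacle here beyond bookkeeping; the only points requiring a little care are to confirm the hypotheses of \Cref{thm:extendable} — namely that $T$ is diagonal (\Cref{thm:hookop}) and that $(x-1)^{n-1}(x+n-1)$ genuinely lies in $\R[t]_{n,0}$ (its roots sum to zero) — and to note that the sign–count conclusion transfers verbatim even in the mildly degenerate cases where $q$, equivalently $T(g)$, has degree strictly less than $4$ or a root at $-\tfrac1n$.
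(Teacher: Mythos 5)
Your proof is correct and takes exactly the approach the paper intends: the paper presents this corollary as an immediate consequence of Theorems~\ref{thm:hookop} and~\ref{thm:extendable}, and your computation that $T\big((x-1)^{n-1}(x+n-1)\big)(t) = n^4\, q\big(\tfrac{t-1}{n}\big)$, together with the observation that the affine reparametrization $t \mapsto t/n$ preserves real-rootedness and the signs of roots, is precisely the bookkeeping that the paper leaves to the reader.
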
 
Finally, we say more about a quintic example of a non-extendable $0$-sum hyperbolicity preserver and its associated hool-shaped hyperbolic polynomial.
\begin{thm} 
\label{thm:quintic_example}
    Let 
    \[
        p = 4500 e_5 - 220 e_1 e_4 + 7 e_1^2 e_3,
    \]
    which is in $5$ variables.
    Then $p$ is symmetric hyperbolic, $p$'s associated operator is not extendable, $\Delta_{\vec{1}\vec{1}}p$ is SOS, and $p$ is not SOS hyperbolic.
\end{thm}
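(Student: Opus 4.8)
The statement packages four claims about the single polynomial $p = 4500e_5 - 220e_1e_4 + 7e_1^2e_3 \in \Gamma_{5,5}$, and the plan is to dispatch them in order: the first two through the associated diagonal operator $T$, the last two by explicit (computer-found, then rationally certified) sum-of-squares data.

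\emph{Computing $T$, and symmetric hyperbolicity.} By \Cref{thm:hookop}, $p$ is symmetric hyperbolic if and only if its associated operator $T \colon \R[t]_{5,0} \to \R[t]_{5,0}$ is a $0$-sum hyperbolicity preserver, and $T$ is automatically diagonal. The first step is to compute $T$ explicitly: for a monic $g(t) = \prod_{i=1}^5(t - r_i) \in \R[t]_{5,0}$ (so $e_1(\vec r) = 0$) one expands $p(\vec r - \vec 1 t)$ using the translation identity $e_j(\vec r - \vec 1 t) = \sum_k \binom{5-k}{j-k}(-t)^{j-k}e_k(\vec r)$; this yields $T(t^{5-k}) = \gamma_k t^{5-k}$ with $(\gamma_0,\gamma_2,\gamma_3,\gamma_4,\gamma_5) = (-750,-1725,-2475,-3400,-4500)$, while $\gamma_1$ never enters since $\R[t]_{5,0}$ has no $t^4$ term. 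Given the explicit $T$, one checks that $T(\hyp_{5,0}) \subseteq \hyp_{5,0}$ using the characterization of diagonal $0$-sum hyperbolicity preservers developed earlier in the paper; a concrete sanity check is that $T$ sends $(t-1)^4(t+4)$ to (a scalar multiple of) $(t-1)^2(t-2)^2(t+6)$, which is real-rooted. With \Cref{thm:hookop} this proves $p$ is symmetric hyperbolic.

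\emph{Non-extendability --- the crux.} A diagonal extension $\hat T \colon \R[t]_5 \to \R[t]_5$ of $T$ must send $t^{5-k} \mapsto \gamma_k t^{5-k}$ for $k \neq 1$, leaving exactly one free parameter $\mu$, the multiplier of $t^4$. Since $\hat T((1+t)^5) = \sum_{j=0}^5 \binom 5j \gamma_{5-j} t^j =: \Phi_\mu(t)$ and only the $t^4$ coefficient of $\Phi_\mu$ depends on $\mu$, the Pólya--Schur characterization of diagonal hyperbolicity preservers \cite{schur1914zwei} says $\hat T$ preserves real-rootedness on $\R[t]_5$ if and only if $\Phi_\mu$ is real-rooted with all roots of the same sign. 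Inspecting the signs of the fixed coefficients forces ``all roots of one sign'' to mean ``all roots $\le 0$'', hence $\mu < 0$; writing $c = -\mu > 0$ and normalizing, the problem reduces to showing that the one-parameter family $\psi_c(t) = 150t^5 + ct^4 + 3450t^3 + 4950t^2 + 3400t + 900$ has a non-real root for every $c > 0$. I expect this to be the main obstacle, because Newton's inequalities for $\psi_c$ are \emph{satisfied} precisely on a short interval $c \in [c_1,c_2]$ (roughly $1137 \le c \le 1202$) and so do not finish the job; I would close this bounded interval by computing $\disc_t(\psi_c)$ as a polynomial in $c$ and checking that $\psi_c$ never acquires five real roots there (via the sign of $\disc_t(\psi_c)$ together with a Sturm or Descartes count at a few sample values of $c$, plus separate handling of the finitely many $c$ with $\disc_t(\psi_c) = 0$). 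This also explains the interest of the example: one computes that $T((t-1)^4(t+4))$ has four of its five real roots of the same sign, so the $d \le 4$ criterion in \Cref{thm:extendable} would spuriously predict extendability --- the phenomenon genuinely breaks at $d = 5$.

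\emph{The sum-of-squares claims.} Using $D_{\vec 1}e_k = (6-k)e_{k-1}$ one computes $D_{\vec 1}p$ (a symmetric hyperbolic quartic) and $D_{\vec 1}^2 p$, hence $\Delta_{\vec 1 \vec 1}p = (D_{\vec 1}p)^2 - p\,D_{\vec 1}^2 p$, an explicit symmetric form of degree $8$ in $5$ variables. Its nonnegativity is automatic, since $\vec 1 \in H_{\vec 1}(p)$ and mixed derivatives of a hyperbolic polynomial at cone points are nonnegative (equivalently $\Delta_{\vec 1 \vec 1}p(x) = p(x)^2\sum_i s_i(x)^{-2}$, where $s_i(x)$ are the roots of $p(x + t\vec 1)$); the content is producing an explicit sum-of-squares decomposition, which I would obtain by solving the associated semidefinite program and rounding to an exact rational Gram matrix, then verify directly. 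One expects feasibility here since a definite determinantal representation $M(x)$ of a power of $p$ would give $\Delta_{\vec 1 \vec 1}p = \mathrm{tr}(\mathrm{adj}(M)^2)$, a manifest sum of squares; no such representation exists (by the last claim), but the semidefinite program turns out to be feasible anyway. Finally, to show $p$ is not SOS-hyperbolic I would exhibit explicit directions $u, w$ in the open convex cone $H_{\vec 1}(p)$ --- membership verified by checking that $p(u + t\vec 1)$ and $p(w + t\vec 1)$ have no positive roots, e.g. for suitable perturbations of $\vec 1$ --- for which the mixed derivative $\Delta_{uw}p = D_u p\, D_w p - p\, D_{uw}p$, a nonnegative degree-$8$ form in effectively four variables (so with room to be non-SOS), fails to be a sum of squares; this I would certify by an explicit dual functional (an infeasibility certificate for the corresponding semidefinite program), again in exact arithmetic. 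Thus the real work lies in the quintic analysis for non-extendability, where the naive Newton obstruction is inconclusive, and in the two explicit certificates for the SOS and non-SOS claims.
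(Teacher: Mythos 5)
Your proof of symmetric hyperbolicity has a genuine gap. You write that ``one checks that $T(\hyp_{5,0}) \subseteq \hyp_{5,0}$ using the characterization of diagonal $0$-sum hyperbolicity preservers developed earlier in the paper,'' but no such characterization exists for $d = 5$: \Cref{thm:extendable} only gives a characterization for $d \le 4$, and the natural Pólya--Schur-type criterion for general $d$ is precisely \Cref{conj:main}, which is open. Verifying that $T((t-1)^4(t+4))$ is real-rooted with four roots of the same sign is only \emph{necessary} (\Cref{lem:necessary}), not sufficient. Separately, your SOS paragraph begins ``Its nonnegativity is automatic, since $\vec 1 \in H_{\vec 1}(p)$\ldots'' --- which assumes the very hyperbolicity you have not yet established. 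The paper's actual logic runs the implication the other way: it first verifies that $D_{\vec 1}p$ is hyperbolic (via \Cref{thm:quartics}, a quartic), irreducible and hence square-free; then it checks \emph{computationally} that $\Delta_{\vec 1 \vec 1}p$ is a sum of squares; and then invokes the elementary univariate fact that if $D_{\vec 1}p$ is hyperbolic and square-free and $\Delta_{\vec 1 \vec 1}p \ge 0$ then $p$ is hyperbolic. So the SOS certificate you describe computing is not a corollary of hyperbolicity but the \emph{engine} that proves it; you have the right certificate, but your logical arrows point the wrong way.

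Your route to non-extendability is different from the paper's and much heavier. You propose to show that the one-parameter family $\psi_c$ obtained by varying the $t^4$ coefficient never becomes real-rooted with all roots of one sign, by discriminant and Sturm-sequence analysis on a bounded $c$-interval where Newton's inequalities fail to resolve matters. This can be made rigorous, but the paper avoids it entirely with a structural argument: \Cref{lem:roots_extendable} shows $T$ is extendable iff there is an $f \in \R[t]_5$ with nonnegative real roots of one sign and $\delta_5(f) = T(g_0) = (t-1)^2(t-2)^2(t+6)$ (up to a nonzero scalar), and \Cref{lem:mult1} shows $\delta_5$ drops multiplicity at a nonzero root by exactly one, so such an $f$ would need roots of multiplicity $3$ at both $t=1$ and $t=2$, impossible for a quintic. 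This multiplicity obstruction is the reason the example is quintic --- one needs a $\delta_d$-image with two distinct double roots, forcing $d \ge 5$ --- and it replaces your entire discriminant computation with two lines. Your treatment of the remaining SOS/non-SOS claims (exact rational Gram matrix, dual infeasibility certificate at an explicit $u,w \in H_{\vec 1}(p)$, with the paper taking $u=\vec 1$, $w=(6,1,1,1,1)$) matches the paper's computational strategy.
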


Finally,  
we show that a class of hook-shaped polynomials have spectrahedral hyperbolicity cones.
\begin{restatable}{thm}{spectrahedral}
\label{thm:spectrahedral}
    If $\ell(x)$ is any linear functional such that $\ell(\vec{1}) \ge 0$, then the polynomial $e_k(x) + \ell(x)e_{k-1}(x)$ is hyperbolic with respect to $\vec{1}$, and its hyperbolicity cone is spectrahedral.
\end{restatable}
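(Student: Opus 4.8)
The plan is to exhibit an explicit spectrahedral representation built from the known spectrahedral representations of the hyperbolicity cones of the elementary symmetric polynomials $e_k$ from \cite{branden2014hyperbolicity, kummer2021spectral}, together with the Schur-complement machinery for "linear pencil plus rank-one correction." First I would establish hyperbolicity of $q := e_k + \ell \cdot e_{k-1}$ with respect to $\vec{1}$. Since $e_k$ and $e_{k-1}$ are both hyperbolic with respect to $\vec{1}$ with nested hyperbolicity cones ($H_{\vec 1}(e_{k-1}) \subseteq H_{\vec 1}(e_k)$, as $e_{k-1}$ is, up to scalar, the directional derivative $D_{\vec 1} e_k$), the one-parameter family $e_k + s\, e_{k-1}$ for $s \ge 0$ should remain hyperbolic: restricting to a line $x + t\vec 1$, we are adding a nonnegative multiple (using $\ell(\vec 1)\ge 0$, so $\ell(x+t\vec1) = \ell(x)+t\ell(\vec 1)$ and the relevant coefficient is controlled) of a real-rooted interlacing polynomial, and interlacing is preserved under such combinations by a standard Hermite--Biehler / Obreschkoff argument. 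Alternatively this follows from the general fact that $p + s\,D_v p$ is hyperbolic when $p$ is, applied after noting $e_{k-1}$ differs from $D_{\vec 1} e_k$ by a positive constant, combined with the substitution $x \mapsto x$ twisted by $\ell$; I would pick whichever is cleanest.

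For the spectrahedral representation, the key identity to exploit is a Schur complement. Write $A(x) = \sum_i A_i x_i \succeq 0$ for the spectrahedral representation of $H_{\vec 1}(e_{k-1})$ with $A(\vec 1) \succ 0$, and let $B(x)$ be a symmetric linear matrix whose determinant (or appropriate minor) is a power of $e_k$ on that cone. The mechanism I expect to work: the block matrix
\[
    M(x) \;=\; \begin{pmatrix} A(x) & b(x) \\ b(x)^{\mathsf T} & \ell(x) \end{pmatrix}
\]
with a suitably chosen vector $b(x)$ linear in $x$ has the property that on $\{A(x)\succ 0\}$, $M(x) \succeq 0 \iff \ell(x) - b(x)^{\mathsf T}A(x)^{-1}b(x) \ge 0$, and one wants to arrange $b$ so that this Schur complement equals $q/e_{k-1}$ (a ratio that is a polynomial, or becomes polynomial after clearing). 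Concretely, since $q/e_{k-1} = e_k/e_{k-1} + \ell$, and $e_k/e_{k-1}$ is a "derivative-type" ratio that is itself known to have spectrahedral sublevel/superlevel sets (this is exactly the content underlying the $e_{n-1}$ results of \cite{sanyal2013derivative} and their generalization), I would combine the spectrahedral representation of $\{e_k/e_{k-1} \ge c\}$-type regions with the single linear constraint contributed by $\ell$. The cleanest route may be to directly invoke \cite[results on]{kummer2021spectral}, which handle products and certain rational modifications of elementary symmetric polynomials, and check that $e_k + \ell\, e_{k-1}$ falls into the class they treat, or to use the "symmetric determinantal representation after one variable elimination" idea.

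The main obstacle I anticipate is twofold. First, verifying that the Schur complement really produces $e_k + \ell e_{k-1}$ up to a positive factor that is itself nonvanishing on the relevant cone — this requires knowing the precise form of the spectrahedral representation of $H_{\vec 1}(e_{k-1})$ and how $e_k$ sits inside it as a minor, i.e., an identity of the shape $e_k = e_{k-1}\cdot(\text{Schur complement})$ valid as polynomials. Second, and more delicate, one must ensure that the constructed spectrahedron is exactly $H_{\vec 1}(q)$ and not a proper subset: this amounts to checking that the boundary components match, i.e., that no spurious inequalities have been introduced and that the representation is "tight" along $\vec 1$ (so $M(\vec 1) \succ 0$). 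I would handle this by a dimension/degree count — the determinant of $M$ should be exactly $e_k + \ell e_{k-1}$ times the determinant of $A$, both sides hyperbolic of the right degree — plus the observation that the spectrahedron is connected and contains $\vec 1$ in its interior, so it must coincide with the hyperbolicity cone. If the general $k$ case proves stubborn, a fallback is to treat the $\ell = c\,e_1$ (scalar multiple of $e_1$) case first, where everything is fully symmetric, and then reduce the general linear functional to this case by the averaging/symmetrization argument available for symmetric hyperbolicity cones.
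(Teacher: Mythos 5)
Your high-level plan (start from the Br\"and\'en--Kummer representations and modify them by a term linear in $\ell$, then match determinants) is in the same spirit as the paper's proof, but the concrete mechanism you propose cannot work. If you border a pencil $A(x)$ representing $H_{\vec{1}}(e_{k-1})$ by a row and column with corner diagonal entry $\ell(x)$, then $M(x)\succeq 0$ forces $\ell(x)\ge 0$, whereas $H_{\vec{1}}(e_k+\ell e_{k-1})$ is in general \emph{not} contained in $\{\ell\ge 0\}$: take $n=k=2$ and $\ell=-x_1+2x_2$ (so $\ell(\vec{1})=1\ge 0$); then $e_2+\ell e_1=-x_1^2+2x_1x_2+2x_2^2$, and the point $(2.7,1)$ lies in its hyperbolicity cone (indeed $q\bigl((2.7,1)+t\vec{1}\bigr)=3t^2+6t+0.11$ has no roots $t\ge 0$) although $\ell(2.7,1)=-0.7<0$. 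Equivalently, your target Schur-complement identity asks for $b(x)^{\intercal}A(x)^{-1}b(x)=-e_k/e_{k-1}$, which is negative on the interior of the cone where $A(x)\succ 0$, while the left-hand side is nonnegative there; no choice of $b(x)$ exists. (A smaller issue: the hyperbolicity argument ``add a nonnegative multiple of an interlacer'' is not literal, since along $x+t\vec{1}$ the multiplier $\ell(x)+t\ell(\vec{1})$ is a linear polynomial in $t$ that can change sign; this is repairable, e.g.\ $e_k+\ell e_{k-1}$ is the pullback of $e_k^{n+1}$ under $x\mapsto(x,\ell(x))$ and $(\vec{1},\ell(\vec{1}))$ lies in the hyperbolicity cone of $e_k^{n+1}$, or one reads hyperbolicity off the definite determinantal identity below.)

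The ingredient you are missing---precisely the identity you flag as your ``main obstacle''---is the structural fact from \cite{branden2014hyperbolicity} and \cite{kummer2021spectral}: there is a pencil $B_{n,k}(x)$ representing $H_{\vec{1}}(e_k)$, a vector $m(x)$ of degree-$(k-1)$ forms whose first entry is $e_{k-1}(x)$, and the first coordinate vector $\delta$, satisfying $B_{n,k}(x)m(x)=e_k(x)\,\delta$, with $\det B_{n,k}(x)$ equal to $e_k(x)$ times a product of directional derivatives of $e_{k-1}$. The paper then uses an \emph{additive rank-one update} rather than a bordering: $D(x)=B_{n,k}(x)+\ell(x)\,\delta\delta^{\intercal}$, which does not force $\ell\ge 0$. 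Since $\delta^{\intercal}m(x)=e_{k-1}(x)$, the identity gives $D(x)m(x)=\bigl(e_k(x)+\ell(x)e_{k-1}(x)\bigr)\delta$, so $e_k+\ell e_{k-1}$ divides $\det D(x)$; because $\det D(x)$ is affine in the coefficients of $\ell$, the cofactor is the same $\ell$-independent product of derivatives of $e_{k-1}$, which does not vanish on the interior of $H_{\vec{1}}(e_{k-1})\supseteq H_{\vec{1}}(e_k+\ell e_{k-1})$ (as $e_{k-1}$ interlaces $e_k+\ell e_{k-1}$); together with $D(\vec{1})=B_{n,k}(\vec{1})+\ell(\vec{1})\delta\delta^{\intercal}\succ 0$, this yields exactness of the representation. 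So your fallback instinct to invoke the Br\"and\'en--Kummer machinery directly is right, but the proof hinges on this specific eigenvector identity and on the rank-one (not bordered) modification, neither of which your sketch supplies.
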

This result suffices to show that all cubic symmetric hyperbolic polynomials have spectrahedral hyperbolicity cones.
\begin{cor}
    If $p$ is a symmetric hyperbolic cubic polynomial, then $p$ has a spectrahedral hyperbolicity cone.
\end{cor}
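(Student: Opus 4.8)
The plan is to reduce an arbitrary symmetric hyperbolic cubic to one of two model shapes by linear changes of variables, which are harmless: if $M$ is an invertible linear map then $H_{M^{-1}\vec{1}}(p\circ M)=M^{-1}H_{\vec{1}}(p)$, and substituting $x=My$ into a defining linear matrix inequality shows this cone is spectrahedral if and only if $H_{\vec1}(p)$ is; scaling $p$ by a nonzero constant also changes nothing. The maps I will use are $M_\mu=I+\mu\vec1\vec1^{\top}$ for $\mu\neq -1/n$, which commute with permutations and send $\vec1\mapsto(1+n\mu)\vec1$, so $p\circ M_\mu$ is again symmetric and hyperbolic with respect to $\vec1$. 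Now, since every symmetric cubic is hook-shaped, write $p=a e_1^3+b e_1 e_2+c e_3$. If $c=0$ then $p=e_1\cdot q$ with $q=a e_1^2+b e_2$; as $e_1(\vec1)$ and $q(\vec1)$ are nonzero (the latter since $p(\vec1)\ne0$) and $p$ is hyperbolic, both factors are hyperbolic with respect to $\vec1$, so $H_{\vec1}(p)=H_{\vec1}(e_1)\cap H_{\vec1}(q)$ is the intersection of a halfspace with the hyperbolicity cone of a hyperbolic quadratic, a classically spectrahedral (Lorentz-type) cone; an intersection of spectrahedra is a spectrahedron, which settles this case, as well as the degenerate cases $n\le 2$ (where $e_3\equiv0$ forces $c=0$).

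For the main case $c\neq0$, I would work in the basis $\{e_1^3,\,e_1 f_2,\,f_3\}$ of the space of symmetric cubics, where $f_k$ is the $k$-th elementary symmetric polynomial in the centered variables $x_i-\tfrac1n e_1$. The key features are: $f_2=-\tfrac12\sum_i(x_i-\tfrac1n e_1)^2\le0$ everywhere, with $f_2(x)=0$ forcing $f_3(x)=0$; $f_2(\vec1)=f_3(\vec1)=0$; and $M_\mu$ fixes $f_2,f_3$ while multiplying $e_1$ by $s:=1+n\mu$. Writing $p=\alpha e_1^3+\beta e_1 f_2+\gamma f_3$, we have $\gamma\ne0$ (as $c\ne0$), and $\alpha\ne0$ is forced since $p(\vec1)=\alpha n^3$; normalize $\alpha>0$ (replacing $p$ by $-p$ if needed). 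Because $f_k(x+t\vec1)=f_k(x)$ and $e_1(x+t\vec1)=e_1(x)+nt$, the polynomial $p(x+t\vec1)$ is, in the variable $w=e_1(x)+nt$ and up to the positive factor $\alpha$, the depressed cubic $w^3+(\beta/\alpha)f_2(x)\,w+(\gamma/\alpha)f_3(x)$; using the cubic discriminant and $f_2\le0$ one gets that $p$ is hyperbolic with respect to $\vec1$ if and only if $\kappa(p):=\beta^3/(\alpha\gamma^2)\ge\kappa_0:=\sup_{f_2(x)\ne0}\tfrac{27f_3(x)^2}{4|f_2(x)|^3}$, and $\kappa_0$ is finite because the displayed ratio is invariant under scaling and under translation by $\vec1$, hence is continuous on a compact quotient. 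A short computation shows $M_\mu$ acts on $(\alpha,\beta,\gamma)$ by $(\alpha s^3,\beta s,\gamma)$ and scaling acts diagonally, so within the locus of interest the orbit of $p$ is exactly $\{(\alpha',\beta',\gamma'):\alpha'\gamma'\neq0,\ \beta'^3/(\alpha'\gamma'^2)=\kappa(p)\}$; that is, $\kappa$ is a complete invariant of the action.

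To finish, apply \Cref{thm:spectrahedral} with $k=3$ and $\ell=\lambda e_1$ (so $\ell(\vec1)=\lambda n\ge0$) to the family $q_\lambda:=e_3+\lambda e_1 e_2$, $\lambda\ge0$: each $q_\lambda$ is hyperbolic with respect to $\vec1$ with a spectrahedral hyperbolicity cone. The function $\lambda\mapsto\kappa(q_\lambda)$ is continuous on $[0,\infty)$, is $\ge\kappa_0$ throughout (each $q_\lambda$ being hyperbolic), tends to $\infty$ as $\lambda\to\infty$, and equals $\kappa_0$ at $\lambda=0$: indeed $q_0=e_3$, and $e_3\big((1,\dots,1,0)+t\vec1\big)=(1+t)^2\big(\tbinom{n-1}{3}(1+t)+\tbinom{n-1}{2}t\big)$ has a double root, so the discriminant criterion above is tight for $e_3$, forcing $\kappa(e_3)=\kappa_0$. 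Hence $\kappa(q_\bullet)$ maps $[0,\infty)$ onto $[\kappa_0,\infty)$, so there is $\lambda\ge0$ with $\kappa(q_\lambda)=\kappa(p)$; by the preceding paragraph $p$ lies in the orbit of $q_\lambda$ under the $M_\mu$'s and scaling, and therefore $H_{\vec1}(p)$ is a linear image of the spectrahedral cone $H_{\vec1}(q_\lambda)$, hence spectrahedral.

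The heart of the argument, and the step that will take the most care to write out, is the second paragraph: identifying $\kappa$ as a complete invariant for the $M_\mu$-and-scaling action and, equivalently, describing the hyperbolic locus of symmetric cubics by the single inequality $\kappa\ge\kappa_0$ with $e_3$ sitting exactly on its boundary. That boundary fact is precisely what lets the one spectrahedral family $\{q_\lambda\}_{\lambda\ge0}$ meet every orbit; everything else is routine.
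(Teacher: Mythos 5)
Your proof is correct but takes a more elaborate route than the paper's. Both arguments exploit the same two ingredients: changes of variables of the form $I + \mu\vec{1}\vec{1}^{\top}$ (the paper's $T_a(x) = x - a e_1(x)\vec{1}$ is your $M_{-a}$) and \Cref{thm:spectrahedral} applied to the family $e_3 + \lambda e_1 e_2$. Where you differ is in how you land on the right $\lambda$. The paper makes a single well-chosen move: pick $t$ so that a coordinate vector $b$ lies on the boundary of the hyperbolicity cone of $p' = p\circ T_t$; since $e_2(b)=e_3(b)=0$, this forces the $e_1^3$ coefficient of $p'$ to vanish, and the remaining two coefficients have equal sign because $b$ is on the boundary. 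That is a three-line reduction. You instead introduce the coordinates $(\alpha,\beta,\gamma)$ in the centered basis $\{e_1^3, e_1 f_2, f_3\}$, identify $\kappa = \beta^3/(\alpha\gamma^2)$ as a complete invariant of the $M_\mu$-and-scaling action, characterize hyperbolicity by the single closed condition $\kappa \ge \kappa_0$ (with $e_3$ sitting exactly on the boundary because $e_3$ has a repeated root along a coordinate direction), and then argue by continuity that $\lambda \mapsto \kappa(e_3 + \lambda e_1 e_2)$ sweeps out all of $[\kappa_0,\infty)$. Your approach is longer but buys something the paper's terse proof does not make explicit: a clean semialgebraic description of the hyperbolic locus in coordinates, and an explicit classification of the orbits, which makes the "limiting argument" the paper invokes when $T_t$ is singular unnecessary. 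You also handle the reducible case $c=0$ (and $n\le 2$) separately via factorization, which the paper's argument covers implicitly. Both proofs are sound; the paper's is shorter, yours is more self-contained and more informative.
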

\section{Conjectures and Open Problems}
Our results leave open several interesting questions. First, we might wonder whether or not the natural extension of the Polya-Schur characterization to 0-sum hyperbolicity preservers holds:
\begin{conj}
    \label{conj:main}
    Let $T : \R[t]_{n,0} \rightarrow \R[t]_{d,0}$ be a diagonal linear map. Then $T$ is a 0-sum hyperbolicity preserver if and only if $T((x-1)^{n-1}(x+n-1))$
    has real roots with $d-1$ having the same sign.
\end{conj}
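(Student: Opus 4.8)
The plan is to transport the statement to hook-shaped polynomials via \Cref{thm:hookop} and then to develop a ``zero-sum'' analogue of the Polya--Schur theory of multiplier sequences \cite{schur1914zwei, borcea2009polya}. By \Cref{thm:hookop}, a diagonal $T$ is the associated operator of a unique $p\in\Gamma_{n,d}$, and $T$ is a $0$-sum hyperbolicity preserver if and only if $p$ is symmetric hyperbolic; moreover one checks that $\Gamma_{n,d}$ is closed under the derivative $D_{\vec 1}$. Writing $u=(0,\dots,0,1)$ and noting that $(x-1)^{n-1}(x+n-1)=(x-1)^{n-1}\bigl((x-1)+n\bigr)$ has roots $1$ (with multiplicity $n-1$) and $-(n-1)$, the definition of the associated operator gives
\[
    T\bigl((x-1)^{n-1}(x+n-1)\bigr)(t)=p\bigl((1-t)\vec 1-n\,u\bigr)=(-n)^d\,p\Bigl(u-\tfrac{1-t}{n}\vec 1\Bigr),
\]
so after an affine reparametrisation of $t$ this is a nonzero scalar multiple of the univariate restriction $t\mapsto p(u+t\vec 1)$ along the line through a coordinate vector, and the ``$d-1$ roots of the same sign'' condition is exactly the one appearing (after the shift by $1/n$) in \Cref{thm:quartics}. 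Thus the conjecture asserts that symmetric hyperbolicity of a hook-shaped $p$ is detected by this single univariate restriction; for $d\le 4$ this is \Cref{thm:extendable}, so the content is the range $d\ge 5$, where by \Cref{thm:quintic_example} the associated operator need not be extendable and one cannot simply restrict a classical linear hyperbolicity preserver on $\R[t]_n$.

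For the necessity of the root-sign condition, suppose $T$ is a $0$-sum hyperbolicity preserver. Then $(x-1)^{n-1}(x+n-1)\in\hyp_{n,0}$, so its image lies in $\hyp_{d,0}$ and is real rooted, and one must rule out its having at least two strictly positive and at least two strictly negative roots. I would argue by degeneration: the test polynomial lies on the boundary of $\hyp_{n,0}$ with a root of multiplicity $n-1$, and can be approached inside $\hyp_{n,0}$ by a family $g_\varepsilon$ in which that root splits into $n-1$ nearby simple roots; since $T$ is continuous and degree non-increasing, the roots of $T(g_\varepsilon)$ converge to those of $T\bigl((x-1)^{n-1}(x+n-1)\bigr)$, and the interlacing constraints along this family (via the Hermite--Biehler theorem) should force the limiting roots onto a single side of $0$. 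Equivalently, in the language of \Cref{sec:general}, a coordinate vector is precisely the point singled out by the degree principle \cite{timofte2003positivity}, and lying on the correct side of the relevant discriminant is the sign condition. I expect this half to be accessible, using only that $D_{\vec 1}$ preserves hyperbolicity and that $\Gamma_{n,d}$ is closed under it.

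The hard direction is sufficiency: assuming $T\bigl((x-1)^{n-1}(x+n-1)\bigr)$ is real rooted with $d-1$ roots of one sign, show $T(\hyp_{n,0})\subseteq\hyp_{d,0}$. In the classical setting a diagonal map of $\R[t]_n$ preserves hyperbolicity exactly when its ``symbol'' $T((1+t)^n)$ is real rooted with all roots of one sign, and the modern proof realises diagonal maps as finite free multiplicative convolutions, for which preservation of real-rootedness is automatic once the fixed polynomial has the right sign pattern; the goal would be to show that $(x-1)^{n-1}(x+n-1)$ plays the role of $(1+t)^n$ on the subspace $\R[t]_{n,0}$, so that the sign condition on its image pins the ``symbol'' of $T$ down to a class of \emph{zero-sum multiplier sequences} that provably preserves $\hyp_{n,0}$. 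A concrete attempt would be induction on $d$: using that $\Gamma_{n,d}$ is closed under $D_{\vec 1}$, deduce hyperbolicity of $p$ from that of $D_{\vec 1}p$ together with suitable control of $p$ near the boundary of the hyperbolicity cone of $D_{\vec 1}p$, with the single sign condition supplying exactly this control at each step. The main obstacle is the non-extendability for $d\ge 5$: none of the off-the-shelf tools --- Polya--Schur multiplier sequences, the Borcea--Brand\"en classification, or finite free probability --- describe linear maps that preserve real-rootedness \emph{only} on the zero-sum subspace, so one genuinely needs a new classification of zero-sum multiplier sequences, and the heart of the matter is to prove that every such sequence is already witnessed by the single polynomial $(x-1)^{n-1}(x+n-1)$; the case $d=5$, where \Cref{thm:quintic_example} supplies a non-extendable example to test any candidate argument against, is the natural place to start.
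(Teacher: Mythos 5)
The statement you were asked to prove is a \emph{conjecture}, and the paper explicitly leaves it open: it proves the equivalence only for $d\le 4$ (via \Cref{thm:extendable} and the topological extendability argument, Lemma~\ref{lem:rhosurj}) and establishes the necessity of the root-sign condition for general $d$ (Lemma~\ref{lem:necessary}); for $d\ge 5$ the existence of non-extendable $0$-sum hyperbolicity preservers (\Cref{lem:nonextendable}) is exactly what blocks the obvious reduction to Polya--Schur. Your reading of the problem is therefore correct, and your translation of the test polynomial $(x-1)^{n-1}(x+n-1)$ into the restriction $t\mapsto p(u+t\vec 1)$ along a coordinate vector matches the computations underlying \Cref{thm:quartics}.

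On the necessity half, your sketch differs from the paper's proof and is also incomplete as stated. You propose a degeneration argument, splitting the multiplicity-$(n-1)$ root and appealing to Hermite--Biehler/interlacing to ``force'' the limiting roots to one side of the origin; as written, this is a heuristic rather than an argument --- interlacing along a one-parameter family does not by itself constrain the signs of the limit roots, and you would need a quantitative input to close the gap. The paper's Lemma~\ref{lem:necessary} is more elementary and direct: it applies $T$ to $(t-1)(t+1)t^{n-2}$ to conclude that $\gamma_0$ and $\gamma_2$ share a sign, restricts $T$ to $\R[t]_{n-2}\subset\R[t]_{n,0}$ to obtain a bona fide diagonal hyperbolicity preserver $T':\R[t]_{n-2}\to\R[t]_{d-2}$ to which classical Polya--Schur applies, and finishes with Descartes' rule of signs. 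If you want a self-contained necessity proof, you should adopt that route or substantially sharpen the degeneration sketch.

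On the sufficiency half for $d\ge 5$, your outline identifies the right obstruction (non-extendability and the absence of any off-the-shelf classification of maps preserving real-rootedness \emph{only} on the zero-sum subspace), but it does not constitute a proof, and you acknowledge as much. The ideas you float --- $D_{\vec 1}$-induction through $\Gamma_{n,d}$, a finite-free-convolution model for $0$-sum multiplier sequences --- are plausible research directions but are not carried out in the paper either; the authors instead offer computational evidence for $d\le 6$ and the related SOS-based \Cref{conj:main} variants. So the correct conclusion is: your proposal correctly situates the conjecture and its difficulty, and correctly identifies what is already proved and what is open, but it should not be presented as a proof --- neither direction of the general-$d$ statement is established by it, and the necessity argument you give should be replaced by the Descartes/Polya--Schur argument the paper actually uses.
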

If true, this conjecture would give a characterization of all hook-shaped symmetric hyperbolic polynomials. 

We show that \Cref{conj:main} holds for $d \le 4$, and also that the sign condition on the roots of $T((x-1)^{n-1}(x+n-1))$ is necessary for all $d$.
Furthermore, we have extensive computational evidence that this conjecture holds when $d \le 6$, using the following procedure. We chose a real rooted polynomial $q\in \R[t]_{d,0}$ with $d-1$ roots of the same sign. For this polynomial $q$, there is a unique diagonal map $T$ satisfying $T((x-1)^{n-1}(x+n-1)) = q$, and a unique hook-shaped symmetric polynomial $p$ whose associated operator is $T$. We then verify that $\Delta_{\vec{1}, \vec{1}} p$ is a sum of squares, which (together with some additional properties of $p$) implies that $p$ is symmetric hyperbolic.
This leads us to an additional conjecture.
\begin{conj}
    If $p \in \Gamma_{n,d}$, then $p$ is symmetric hyperbolic if and only if $\Delta_{\vec{1}, \vec{1}} p $ is SOS.
\end{conj}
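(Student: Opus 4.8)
The plan is to prove the two implications separately. The forward direction -- $p$ symmetric hyperbolic $\Rightarrow \Delta_{\vec 1,\vec 1}p$ is SOS -- is the substantive one, and I expect it to be the main obstacle.

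A natural first step is a coordinate reduction. Setting $s = e_1(x)$ and $f_j = e_j\!\bigl(x - \tfrac{s}{n}\vec 1\bigr)$ for $j = 2,\dots,d$, a hook-shaped $p$ becomes a polynomial $P(s,f_2,\dots,f_d)$ of degree exactly $d$ in $s$ with vanishing $s^{d-1}$-coefficient (the ``$0$-sum'' feature). Since translation by $t\vec 1$ shifts $s$ by $nt$ and fixes every $f_j$, the operator $D_{\vec 1}$ acts as $n\,\partial_s$, whence
\[
    \Delta_{\vec 1,\vec 1}p \;=\; n^2\bigl((\partial_s P)^2 - P\,\partial_s^2 P\bigr);
\]
equivalently, under the correspondence of \Cref{thm:hookop}, $\Delta_{\vec 1,\vec 1}p$ is, up to the scalar $n^2$, the Wronskian-type form $(T(g)')^2 - T(g)\,T(g)''$ of the image polynomial. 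For the forward direction the goal becomes an explicit sum-of-squares decomposition of $(\partial_s P)^2 - P\,\partial_s^2 P$ over $\R[x_1,\dots,x_n]$. The natural template is the classical univariate identity $(q')^2 - q q'' = \sum_j \bigl(q/(t-\rho_j)\bigr)^2$ for a real-rooted $q$ with roots $\rho_j$; since the roots of $P(\,\cdot\,,f_2,\dots,f_d)$ are not rational functions of the $f_j$, the real work is to replace this irrational expression by a genuine certificate built from the structure of the diagonal $0$-sum hyperbolicity preserver $T$ attached to $p$. Concretely, I would try to encode ``$T$ is a $0$-sum hyperbolicity preserver'' as positive semidefiniteness of a Hermite/Hankel matrix in the diagonal multipliers $\gamma_k$ of $T$ -- in the spirit of P\'olya--Schur and the Borcea--Br\"anden classification, and in keeping with \Cref{conj:main} -- factor that matrix, and push the factorization through the symmetric-function substitution; a complementary route factors the symbol of $T$ in the Laguerre--P\'olya class, extracts a definite determinantal representation of a power of $p$, and then invokes \cite[Proposition 4.7]{saunderson2019certifying}.

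For the reverse direction -- $\Delta_{\vec 1,\vec 1}p$ SOS $\Rightarrow p$ symmetric hyperbolic -- the plan is to combine the sufficient condition for hyperbolicity that underlies the computational verification just described (SOS-ness of $\Delta_{\vec 1,\vec 1}p$ together with side conditions on $p$ forces hyperbolicity) with a check that an arbitrary $p\in\Gamma_{n,d}$ with $\Delta_{\vec 1,\vec 1}p$ SOS automatically satisfies those side conditions. This check is the delicate point: SOS-ness only yields global nonnegativity of $\Delta_{\vec 1,\vec 1}p$, and nonnegativity of the mixed derivative alone does not force hyperbolicity -- e.g.\ $\Delta_{\vec 1,\vec 1}(e_1 e_2 - e_1^3) = n^2 e_2^2 - n(n-1)e_1^2 e_2 + (n^2+n+1)e_1^4$ is a positive-definite form, hence a sum of squares, while $e_1 e_2 - e_1^3 = e_1(e_2 - e_1^2)$ is a linear form times a negative-definite quadratic and so is not hyperbolic with respect to $\vec 1$ for $n\ge 2$. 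One must therefore recover from the SOS decomposition the sign information about the roots of $T\bigl((x-1)^{n-1}(x+n-1)\bigr)$ required by \Cref{thm:extendable} and \Cref{conj:main}, not merely the nonnegativity of $\Delta_{\vec 1,\vec 1}p$.

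The hard part is the forward direction for $d\ge 4$. When $d=3$ every symmetric cubic is hook-shaped, $\Delta_{\vec 1,\vec 1}p$ is a quartic, and the implication follows from the nonnegativity of the mixed derivative on the hyperbolicity cone \cite{gaarding1959inequality,kummer2015hyperbolic} together with the classical fact that every symmetric nonnegative quartic form is a sum of squares; so in degree $3$ the forward implication is a theorem. For $d\ge 4$, $\Delta_{\vec 1,\vec 1}p$ has degree $2d-2\ge 6$, a range in which symmetric nonnegative forms need not be SOS, so nonnegativity alone is not enough and one must genuinely exploit the hook-shaped structure. Worse, by \Cref{thm:extendable} there exist $0$-sum hyperbolicity preservers in degrees $\ge 5$ that do not extend to honest univariate hyperbolicity preservers, so the Laguerre--P\'olya factorization is unavailable in general and one is thrown back on the open characterization \Cref{conj:main}; and the determinantal-representation route then becomes a symmetric instance of the Generalized Lax Conjecture. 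Producing the Hermite-factorization certificate uniformly in $n$ and $d$ -- equivalently, showing that this distinguished family of nonnegative symmetric forms always admits SOS certificates -- is the crux.
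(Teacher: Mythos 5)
This statement appears in the paper only as a \emph{conjecture}, supported by computational evidence; the paper gives no proof, so there is no ``paper's own proof'' to compare against. Your proposal is, as you yourself acknowledge, a roadmap rather than an argument: the forward implication for $d \ge 4$ is deferred to the open \Cref{conj:main} (or to a symmetric instance of the Generalized Lax Conjecture), so nothing in the writeup actually establishes the statement.

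What you should have said far more plainly is that your reverse-direction discussion contains what appears to be an outright \emph{counterexample} to the conjecture as stated. The polynomial $p = e_1 e_2 - e_1^3$ is hook-shaped (take $a_1 = -1$, $a_2 = 1$, $a_3 = 0$ in $\Gamma_{n,3}$), and you compute
\[
\Delta_{\vec 1, \vec 1}p \;=\; n^2 e_2^2 - n(n-1)e_1^2 e_2 + (n^2+n+1)e_1^4,
\]
which is a positive-definite binary quadratic form in $e_2$ and $e_1^2$ (discriminant $-3n^2(n+1)^2 < 0$), hence a sum of two squares of quadratic forms in $x$. Yet $p = e_1(e_2 - e_1^2)$, and the quadratic factor $e_2 - e_1^2 = -\bigl(\sum_i x_i^2 + e_2\bigr)$ is negative definite, so it is not hyperbolic with respect to $\vec 1$ (or any direction) for $n \ge 2$; consequently $p$ is not symmetric hyperbolic. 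The conjecture's hypothesis is exactly SOS-ness of $\Delta_{\vec 1,\vec 1}p$; your example satisfies it and violates the conclusion. Framing this as a ``delicate point'' where one must ``recover sign information from the SOS decomposition'' obscures the fact that no further information is available from the stated hypothesis: you have a refutation, not an obstacle. The honest conclusion is that (if the computation is right) the conjecture is false as written and needs an added side condition -- for instance that $D_{\vec 1}p$ is itself hyperbolic and square-free, the condition the paper invokes in its quintic lemma -- rather than a more clever reading of the certificate. (Separately, the claim that ``every symmetric nonnegative quartic form is a sum of squares'' is not an established classical fact and should not be leaned on for the $d=3$ case without a reference.)
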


A more speculative conjecture is as follows:
\begin{conj}
    If $p \in \Gamma_{n,d}$ is symmetric hyperbolic, then the associated operator of $p$ is extendable if and only if $p$ is weakly SOS-hyperbolic.
\end{conj}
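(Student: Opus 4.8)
\emph{Setup and forward direction.} The plan is to prove the two implications by very different means. For ``extendable $\Rightarrow$ weakly SOS\nobreakdash-hyperbolic,'' suppose the associated operator $T$ extends to a diagonal hyperbolicity preserver $\hat T \colon \R[t]_n \to \R[t]_d$. The Pólya--Schur theory of diagonal hyperbolicity preservers (\cite{schur1914zwei}, and \cite{borcea2009polya} for the modern treatment) shows that such a $\hat T$ is a finite multiplier sequence, and hence factors as a composition of a single degree-reducing operator with elementary multiplier operators, one attached to each real negative root of the symbol polynomial of $\hat T$ (which, in the language of this paper, is $T\bigl((x-1)^{n-1}(x+n-1)\bigr)$, equivalently $p(u+t\vec 1)$ for $u$ a coordinate direction). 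The first step is to push this factorization through the linear isomorphism $p \leftrightarrow T$ of \Cref{thm:hookop}: I expect each elementary factor to correspond, on the symmetric side, either to multiplication by a linear form $\ell$ with $\ell(\vec 1) \ge 0$, or to the directional derivative $D_{\vec 1}$, and the degree-reducing operator to an iterate of $D_{\vec 1}$. The second step is then to show that weak SOS\nobreakdash-hyperbolicity (or, more ambitiously, the property that some power has a definite determinantal representation, which by \cite[Proposition~4.7]{saunderson2019certifying} implies SOS\nobreakdash-hyperbolicity) is preserved under each of these two operations, with base cases $e_1^d$ and $e_d$; closure under multiplication by $\ell$ is essentially a block/Schur-complement construction, compatible with the spectrahedral representations already produced in \Cref{thm:spectrahedral}, while closure under $D_{\vec 1}$ is more delicate and will likely require a Bézoutian argument.

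\emph{Reverse direction.} For ``weakly SOS\nobreakdash-hyperbolic $\Rightarrow$ extendable'' I would argue the contrapositive, generalizing the quintic phenomenon of \Cref{thm:quintic_example}. If $T$ is not extendable then, by \Cref{thm:extendable}, the failure cannot be detected on the zero-sum slice $\hyp_{n,0}$ alone (since $p$ is still assumed hyperbolic there): there is some $g \in \hyp_n \setminus \hyp_{n,0}$ that every diagonal extension of $T$ sends to a polynomial with a nonreal root. The goal is to convert such a $g$ into a pair $u, w \in H_{\vec 1}(p)$ for which the mixed derivative $\Delta_{uw} p$ fails to be a sum of squares---indeed, I expect it to attain a negative value for suitable $u,w$ lying on or near the boundary of the hyperbolicity cone in the ``bad'' direction prescribed by $g$. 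The mechanism is that, restricted to two-dimensional planes through $\vec 1$, the sign behavior of $\Delta_{uw} p$ is governed by the discriminant of $p(x + t\vec 1)$, which via the operator picture is the discriminant of $T$ applied to a univariate polynomial built from $x$; so the SOS property of $\Delta_{uw} p$ for all $u, w$ should force a Hermite/Bézout matrix positivity condition that is exactly equivalent to $\hat T$ being a multiplier sequence on all of $\R[t]_n$, i.e.\ to extendability.

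\emph{Main obstacle.} I expect the reverse direction to be the crux: identifying precisely which Hermite-matrix condition weak SOS\nobreakdash-hyperbolicity imposes, and matching it to the Pólya--Schur (same-sign real-rootedness) condition that governs extendability, is where genuinely new input seems to be needed, and is plausibly why the conjecture is stated speculatively. The quintic of \Cref{thm:quintic_example} is the natural test case, since there $\Delta_{\vec 1 \vec 1} p$ is SOS while $p$ is not SOS\nobreakdash-hyperbolic; this already tells us that the argument must exploit directions $u, w$ transverse to $\vec 1$, and pins down the geometry that any proof of the equivalence has to capture. A secondary obstacle is that the closure lemmas in the forward direction---especially stability of weak SOS\nobreakdash-hyperbolicity (or of having a determinantal power) under $D_{\vec 1}$---are not known in general and may themselves require the determinantal route rather than a purely SOS-based one.
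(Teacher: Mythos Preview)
The statement you are addressing is not a theorem but a conjecture that the paper explicitly leaves open. It appears in Section~2 (``Conjectures and Open Problems''), where the authors call it ``a more speculative conjecture'' and offer only computational evidence in degrees up to five, remarking that they ``were not able to find a counterexample.'' There is therefore no proof in the paper to compare your proposal against.

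Your write-up is partly aware of this---you note the conjecture is ``stated speculatively''---so what you have produced is a research outline rather than a proof, and the gaps you yourself flag are real and unresolved. Closure of weak SOS-hyperbolicity under $D_{\vec 1}$ is not established in the paper or in the cited literature, and the reverse implication---converting non-extendability into a specific pair $u,w \in H_{\vec 1}(p)$ with $\Delta_{uw}p$ not SOS---would require substantially new input beyond anything the paper develops; the quintic of \Cref{thm:quintic_example} was handled by a direct SDP computation, not by any structural mechanism of the kind you describe. Your factorization step in the forward direction also needs justification: a finite multiplier sequence does have a real-rooted symbol, but it is not clear that the bijection $p \leftrightarrow T$ of \Cref{thm:hookop} carries a factorization of $\hat T$ into the operations on $p$ that you list (multiplication by a linear form, application of $D_{\vec 1}$), since that bijection is upper-triangular in the coefficients rather than multiplicative.
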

We have a large amount of computational evidence that for cubics and quartics this holds in the sense that we cannot find any examples of such polynomials which are not SOS-hyperbolic. Our evidence in the case of quintic polynomials is more limited, but we were not able to find a counterexample.

\section{General Results about Symmetric Hyperbolic Polynomials}
\label{sec:general}
While most of our results concern the specific class of hook-shaped symmetric polynomials, we first present some general results about symmetric hyperbolic polynomials. These results are useful for our investigation of hook-shaped polynomials, but they may be of general interest as well.

First, we justify our choice of defining a symmetric hyperbolic polynomial to be one which is hyperbolic with respect to $\vec{1}$ (as opposed to a general vector).
\begin{lemma}
\label{lem:irred_sym}
    If $f$ is an irreducible symmetric hyperbolic polynomial, and $f$ is hyperbolic with respect to some $v \in \R^n$, then $f$ is hyperbolic with respect to $\vec{1}$.
\end{lemma}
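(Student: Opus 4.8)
The plan is to combine the convexity of hyperbolicity cones with the fact that an \emph{irreducible} hyperbolic polynomial has an essentially unique hyperbolicity cone, and then to symmetrize. Recall the classical facts (see \cite{gaarding1959inequality,bauschke2001hyperbolic}) that for $f$ hyperbolic with respect to $v$ the open cone $H_v(f)$ is convex, $f$ is hyperbolic with respect to every point of $H_v(f)$, $H_w(f) = H_v(f)$ for every $w \in H_v(f)$, and $H_{-v}(f) = -H_v(f)$. The place where irreducibility enters is the following standard fact: if $f$ is irreducible, then the set of directions with respect to which $f$ is hyperbolic is exactly $H_v(f) \cup \bigl(-H_v(f)\bigr)$; equivalently, any two hyperbolicity cones of $f$ agree up to sign. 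This rests on the irreducibility of the real hypersurface $\{f = 0\}$, which the hyperbolicity cone bounds.

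We may assume $n \ge 3$. For $n = 1$ every nonzero homogeneous polynomial is hyperbolic with respect to $\vec{1}$, and for $n = 2$ an irreducible symmetric homogeneous polynomial is either a nonzero scalar multiple of $e_1$, which is hyperbolic with respect to $\vec{1}$, or an anisotropic binary quadratic form, which is definite and hence not hyperbolic with respect to any direction.

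Now let $f$ be irreducible, symmetric, and hyperbolic with respect to $v$, and put $\Lambda = H_v(f)$. For $\sigma \in S_n$ acting on $\R^n$ by permuting coordinates, $f = f \circ \sigma^{-1}$ shows that $f$ is hyperbolic with respect to $\sigma v$ with $H_{\sigma v}(f) = \sigma \Lambda$. By the uniqueness statement above, $\sigma \Lambda \in \{\Lambda, -\Lambda\}$, so $\chi(\sigma) := 0$ if $\sigma \Lambda = \Lambda$ and $\chi(\sigma) := 1$ otherwise defines a homomorphism $\chi \colon S_n \to \mathbb{Z}/2\mathbb{Z}$. Any homomorphism to an abelian group kills the commutator subgroup $[S_n,S_n] = A_n$, so $\sigma \Lambda = \Lambda$ for all $\sigma$ in the alternating group $A_n$; that is, the nonempty open convex cone $\Lambda$ is $A_n$-invariant. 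Since $\Lambda$ is open and nonempty it is not contained in $\{x : e_1(x) = 0\}$, so we may choose $w \in \Lambda$ with $e_1(w) \ne 0$. By convexity the average $\frac{1}{|A_n|}\sum_{\sigma \in A_n} \sigma w$ lies in $\Lambda$, and because $A_n$ acts transitively on $\{1,\dots,n\}$ (this is where $n \ge 3$ is used) this average has all coordinates equal to $\frac{1}{n}e_1(w) \ne 0$, i.e.\ it equals $\frac{1}{n}e_1(w)\,\vec{1}$. As $\Lambda$ is a cone, this forces $\vec{1} \in \Lambda$ or $-\vec{1} \in \Lambda$, and in either case $f$ is hyperbolic with respect to $\vec{1}$, since hyperbolicity with respect to $-\vec{1}$ is equivalent to hyperbolicity with respect to $\vec{1}$.

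The only non-elementary ingredient is the uniqueness (up to sign) of the hyperbolicity cone of an irreducible polynomial; everything else is the observation that symmetry forces $A_n$-invariance of the cone together with a one-line averaging argument, so the main point requiring care is justifying that uniqueness (via irreducibility of $\{f=0\}$) if a citable form is not at hand. Irreducibility genuinely cannot be dropped: $\prod_{i<j}(x_i - x_j)^2$ is symmetric and hyperbolic with respect to every vector with distinct coordinates but not with respect to $\vec{1}$, and its hyperbolicity cones are pairwise distinct Weyl chambers, so even the homomorphism $\chi$ above fails to be well defined.
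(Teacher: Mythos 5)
Your proof is correct and takes essentially the same route as the paper's: use irreducibility to get that the hyperbolicity cone is unique up to sign (the paper cites this to \cite{kummer2019connectivity}), conclude that the stabilizer of $H_v(f)$ in $S_n$ has index at most $2$ and hence contains $A_n$, and average a cone point with nonzero coordinate sum over that subgroup to land on $\pm\vec{1}$. The only difference is that you are slightly more careful than the paper: you treat $n\le 2$ explicitly (the paper's claim that the index-$\le 2$ stabilizer acts transitively on $[n]$ requires $n\ge 3$), and you note that the averaging may produce $-\vec{1}$ rather than $\vec{1}$ when $e_1(w)<0$, which is harmless since $H_{-\vec{1}}(f)=-H_{\vec{1}}(f)$.
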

\begin{proof}
   By \cite{kummer2019connectivity}, if $f$ is irreducible, the only hyperbolicity cones of $f$ are $H_v(f)$ and $-H_v(f)$.

    Let $S$ be the stabiliser of $H_v(f)$ in $S_n$, which we see must be index at most 2, because every element of $S_n$ must send $H_v(f)$ to itself or to $-H_v(f)$.
    In particular, $S$ must act transitively on $[n]$.

    Since $v$ is in the interior of $H_v(f)$, there must be some $v' \in H_v(f)$ so that $e_1(v') \neq 0$. We then have that $\vec{1} =  \frac{n}{|S|e_1(v')} \sum_{\sigma \in S}\sigma v'$ is an element of $H_v(f)$ by convexity, and so $f$ is hyperbolic with respect to $\vec{1}$.
\end{proof}

We define the \emph{elementary symmetric mean} of degree $k$ in $n$ variables to be $$\tilde{e}^n_k = \frac{1}{\binom{n}{k}}e^n_k.$$
We will often suppress the dependence of this expression on $n$ and simply write $\tilde{e}_k$ when $n$ can be safely left implicit. This notation will make many expressions in terms of elementary symmetric polynomials independent of the number of variables.
For example, we have the binomial expansion type expression for any $n$:
\begin{equation}
\label{eq:norm_elem_expansion}
    \tilde{e}_k(x + t \vec{1}) = \sum_{i=0}^k \binom{k}{i} \tilde{e}_i(x) t^{k-i}.
\end{equation}

Any symmetric poynomial can be expressed as a polynomial in elementary symmetric means. We now show that if a polynomial is symmetric hyperbolic, then the same expression in elementary symmetric means is also symmeetric hyperbolic for any larger number of variables.

\begin{lemma}
\label{lem:n_bigger}
    Let $f$ be a symmetric hyperbolic polynomial in $n$ variables. Express $f$ as a polynomial in elementary symmetric means, so  $f = q(\tilde{e}^n_1, \tilde{e}^n_2, \dots, \tilde{e}^n_n)$, for some polynomial $q$.
    For all $m \ge n$, the polynomial $f_m = q(\tilde{e}^m_1, \tilde{e}^m_2, \dots, \tilde{e}^m_n)$ is also symmetric hyperbolic.
\end{lemma}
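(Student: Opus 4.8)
The plan is to recognize $f_m$ --- up to a sign change of the variables --- as the output of the composition construction in Theorem 3.1 of \cite{bauschke2001hyperbolic}, applied with base polynomial $e^m_n\in\R[x_1,\dots,x_m]$. This polynomial is homogeneous of degree $n$ and hyperbolic with respect to $\vec 1$ (standard: it is a positive multiple of the directional derivative $D_{\vec 1}^{\,m-n}(x_1\cdots x_m)$, and derivatives in a hyperbolicity direction preserve hyperbolicity). Writing $\lambda_1(x),\dots,\lambda_n(x)$ for the roots in $t$ of $e^m_n(x+t\vec 1)$, Theorem 3.1 of \cite{bauschke2001hyperbolic} then gives that $f(\lambda_1(x),\dots,\lambda_n(x))$ is hyperbolic with respect to $\vec 1\in\R^m$, because $f$, regarded as $p\in\R[y_1,\dots,y_n]$, is symmetric hyperbolic by hypothesis. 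So the task reduces to identifying this composition with $f_m(\pm x)$.

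The key computation is to express the symmetric functions of $\lambda_1(x),\dots,\lambda_n(x)$ in terms of the $\tilde{e}^m_k(x)$. Expanding the generating function $\prod_{i=1}^m(1+(x_i+t)s)=(1+ts)^m\prod_{i=1}^m\bigl(1+\tfrac{x_i s}{1+ts}\bigr)$ and extracting the coefficient of $s^n$ should give
\[
    e^m_n(x+t\vec 1)=\sum_{k=0}^n \binom{m-k}{n-k} e^m_k(x)\,t^{\,n-k}.
\]
Dividing by the leading coefficient $\binom{m}{n}$, substituting $e^m_k=\binom{m}{k}\tilde{e}^m_k$, and applying the identity $\binom{m}{k}\binom{m-k}{n-k}=\binom{m}{n}\binom{n}{k}$, the monic polynomial with roots $\lambda_1(x),\dots,\lambda_n(x)$ becomes $\sum_{k=0}^n\binom{n}{k}\tilde{e}^m_k(x)\,t^{\,n-k}$. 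Comparing with $\prod_{i=1}^n(t-\lambda_i(x))=\sum_{k=0}^n(-1)^k e^n_k(\lambda(x))\,t^{\,n-k}$ and dividing by $\binom{n}{k}$ then yields
\[
    \tilde{e}^n_k\bigl(\lambda_1(x),\dots,\lambda_n(x)\bigr)=(-1)^k\,\tilde{e}^m_k(x),\qquad k=0,1,\dots,n.
\]

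From here I would finish as follows. Since $f$ is homogeneous (hyperbolic polynomials are, by convention), we may take $q$ weighted homogeneous with $k$-th argument of weight $k$; hence $f_m$ is again a form of the same degree in $m$ variables, and it is symmetric by construction. Using $\tilde{e}^m_k(-x)=(-1)^k\tilde{e}^m_k(x)$ together with the displayed identity, $f(\lambda_1(x),\dots,\lambda_n(x))=q\bigl(\tilde{e}^m_1(-x),\dots,\tilde{e}^m_n(-x)\bigr)=f_m(-x)$. By Theorem 3.1 of \cite{bauschke2001hyperbolic} this is hyperbolic with respect to $\vec 1$; since a homogeneous polynomial is hyperbolic with respect to $\vec 1$ if and only if it is so after the substitution $x\mapsto -x$, the polynomial $f_m$ is hyperbolic with respect to $\vec 1$ as well. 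Finally $f_m(\vec 1)=q(1,\dots,1)=f(\vec 1)\neq 0$ since $f$ is hyperbolic with respect to $\vec 1$, so $f_m$ is symmetric hyperbolic.

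The one genuine idea is the choice of base polynomial $e^m_n$: it is precisely the polynomial for which the roots of $e^m_n(x+t\vec 1)$ carry the elementary symmetric means of $x$, up to sign. After that the argument is a generating-function expansion plus the binomial identity $\binom{m}{k}\binom{m-k}{n-k}=\binom{m}{n}\binom{n}{k}$, both routine. I expect the only points needing care are the sign bookkeeping and checking that $f_m$ is genuinely a homogeneous symmetric polynomial, so that the statement is well-posed. A more hands-on alternative would be an induction on $m$ via the step $m\to m+1$, but locating the right univariate operation there seems no easier than the route above.
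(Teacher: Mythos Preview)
Your proof is correct, but it takes a different route from the paper's. The paper argues directly: expanding $f_m(x+t\vec 1)$ in powers of $t$ via \eqref{eq:norm_elem_expansion}, the coefficients are polynomial expressions in $\tilde e^m_1(x),\dots,\tilde e^m_n(x)$ whose form does not depend on $m$. The paper then invokes a result of Rosset \cite{rosset1989normalized} stating that for any $x\in\R^m$ there is $y\in\R^n$ with $\tilde e^n_i(y)=\tilde e^m_i(x)$ for all $i\le n$; this yields $f_m(x+t\vec 1)=f(y+t\vec 1)$, which is real rooted.

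Your approach instead applies the composition theorem of \cite{bauschke2001hyperbolic} (quoted in the introduction) with base polynomial $e^m_n$. The nice observation is that your computation of the $\tilde e^n_k(\lambda(x))$ \emph{constructs} Rosset's witness explicitly: the vector $y=-\lambda(x)$ of negated roots of $e^m_n(x+t\vec 1)$ satisfies exactly $\tilde e^n_k(y)=\tilde e^m_k(x)$. So your argument is more self-contained on that point, trading the black-box use of Rosset's theorem for the black-box use of the Bauschke--G\"uler--Lewis--Sendov composition theorem plus a short generating-function calculation. The paper's proof is shorter once Rosset is granted; yours makes the underlying mechanism (that $e^m_n$ carries the means) visible.
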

\begin{proof}
Note that we may expand
\[
f_m(x + t\vec{1}) = \sum_{i = 0}^d q_k(\tilde{e}^m_1(x), \tilde{e}^m_2(x), \dots, \tilde{e}^m_n(x))t^{k-i},
\]
where $q_k$ is a polynomial whose coefficients crucially do not depend on $m$, due to \Cref{eq:norm_elem_expansion}.

In \cite{rosset1989normalized} it is shown that for any $x \in \R^m$, there exists $y \in \R^n$ such that for every $i \in [n]$, $\tilde{e}_i^n(y) = \tilde{e}_i^m(x)$. Therefore there exists some $y \in \R^n$ so that
\[
f_m(x + t\vec{1}) = f(y + t\vec{1}).
\]
This is then real rooted for every $x$ since $f$ is symmetric hyperbolic.
\end{proof}

We now prove an analogue of the degree principle for globally nonnegative polynomials (originally proved in \cite{timofte2003positivity}) for hyperbolic polynomials.

\begin{thm}
    Let $p$ be a homogeneous symmetric polynomial of degree $d$ with $p(\vec{1}) \neq 0$. Then $p$ is symmetric hyperbolic if and only if for every $x$ with at most $d-1$ distinct entries, the univariate polynomial $p(x+t\vec{1})$ has only real roots.
\end{thm}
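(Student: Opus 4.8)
The plan is to deduce this from the classical degree principle for symmetric nonnegative polynomials via the standard trick that reduces hyperbolicity of a polynomial to nonnegativity of its discriminant, combined with the observation that the discriminant of a symmetric polynomial evaluated along the $\vec 1$-direction is itself a symmetric polynomial in the remaining variables. One direction is trivial: if $p$ is symmetric hyperbolic then $p(x+t\vec 1)$ has only real roots for \emph{every} $x$, in particular for those with at most $d-1$ distinct entries. So the content is the converse.

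First I would set up the discriminant function. For a symmetric homogeneous $p$ of degree $d$ with $p(\vec 1)\neq 0$, write $p(x+t\vec 1)=\sum_{i=0}^d c_i(x)\,t^{d-i}$ where $c_0 = p(\vec 1)$ is a nonzero constant and each $c_i$ is a symmetric polynomial in $x$ of degree $i$ (this uses the expansion \eqref{eq:norm_elem_expansion} type identity). Then $\disc_t\big(p(x+t\vec 1)\big)$ is a symmetric polynomial $D(x)$ in $x$. A univariate polynomial of degree $d$ with real coefficients and nonzero leading coefficient has all real roots only if its discriminant is nonnegative; it has all real roots if its discriminant is positive (the boundary case of repeated real roots needs a small perturbation argument). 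So $p$ is symmetric hyperbolic essentially iff $D(x)\ge 0$ for all $x\in\R^n$; the only subtlety is distinguishing ``all real roots, some repeated'' from ``complex roots'' on the zero set of $D$, which I will handle by a continuity/perturbation argument or by noting that the set of $x$ with $p(x+t\vec 1)$ real-rooted is closed.

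Second, I would invoke the degree principle of Timofte \cite{timofte2003positivity}: a symmetric polynomial in $n$ variables of degree $e$ is nonnegative on $\R^n$ if and only if it is nonnegative on all points with at most $\max\{2,\lfloor e/2\rfloor\}$ distinct coordinates (I should recall the exact statement and the exact bound from that paper). Applying this to $D(x) = \disc_t(p(x+t\vec 1))$: the degree of $D$ in $x$ is $d(d-1)$ (the discriminant of a degree-$d$ form is degree $2d-2$ in the coefficients, and $c_i$ has degree $i$, giving a weighted count that I would compute precisely), so the relevant bound on the number of distinct entries from Timofte's theorem is some explicit function of $d$. I would then need to check that this bound is $\le d-1$; if Timofte's raw bound is larger, I would use that $D$ need only be tested for nonnegativity, and any point with at most $d-1$ distinct entries at which $p(x+t\vec 1)$ is real-rooted gives $D(x)\ge 0$ — so I want the degree-principle bound to come out as $d-1$ or smaller, which is plausible given the half-degree in the statement; if there is a gap I would close it either by a sharper version of the degree principle for discriminants or by a direct argument. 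The main obstacle I anticipate is exactly this bookkeeping: getting the bound from the degree principle to match $d-1$ cleanly (and handling the boundary case where $D$ vanishes but the roots are still real), rather than any conceptual difficulty — the conceptual structure is just ``hyperbolicity $\Leftrightarrow$ nonnegativity of a symmetric discriminant $\Rightarrow$ apply Timofte.''
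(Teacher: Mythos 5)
Your plan has two concrete gaps, and the paper's own proof avoids both by taking a genuinely different route.

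\textbf{The degree bound doesn't come out.} The discriminant $D(x)=\disc_t\bigl(p(x+t\vec 1)\bigr)$ is isobaric of weight $d(d-1)$ in the coefficients $c_i(x)$ (each monomial $\prod c_i^{m_i}$ has $\sum i\,m_i=d(d-1)$), and since $\deg c_i=i$, the polynomial $D(x)$ is homogeneous of degree $d(d-1)$ in $x$. Timofte's half-degree principle then gives a test set of points with at most $\lfloor d(d-1)/2\rfloor$ distinct coordinates, which is $\binom{d}{2}$, far larger than $d-1$ once $d\ge 3$. So even if the reduction to nonnegativity of $D$ were clean, the bound you would extract is the wrong one, and there is no sharper ``degree principle for discriminants'' that rescues it — this is a structural mismatch, not bookkeeping.

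\textbf{$D\ge 0$ does not characterize real-rootedness for $d\ge 4$.} A degree-$4$ or higher real polynomial with two pairs of nonreal conjugate roots has strictly positive discriminant, so ``$D(x)\ge 0$ for all $x$'' is necessary but not sufficient for hyperbolicity. Your perturbation/closedness argument handles the boundary case (repeated real roots), but it cannot handle the interior case where $D(x)>0$ and the roots are still complex. This is a second, independent reason the reduction fails.

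\textbf{What the paper does instead.} The paper inducts on $d$ and first reduces to the case where $D_{\vec 1}p$ is already symmetric hyperbolic. Then it uses the key structural fact that
\[
p(x+t\vec 1)=c\,e_d(x)+q\bigl(e_1(x),\dots,e_{d-1}(x),t\bigr),
\]
so that any two points $x,x'$ with $e_i(x)=e_i(x')$ for $i\le d-1$ give curves $p(x+t\vec 1)$ and $p(x'+t\vec 1)$ differing by a \emph{constant} $c\bigl(e_d(x')-e_d(x)\bigr)$. If $p(x+t\vec 1)$ has a strictly positive local minimum (which must happen if it fails to be real-rooted while $D_{\vec 1}p(x+t\vec 1)$ is real-rooted), one then \emph{maximizes} $c\,e_d(x')$ subject to fixing $e_1,\dots,e_{d-1}$; by the optimization form of the degree principle due to Riener, the maximum is attained at a point with at most $d-1$ distinct coordinates, and the vertical shift only raises the local minimum, so the failure of real-rootedness persists there. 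This is where the sharp bound $d-1$ actually comes from: from the number of constraints in the optimization, not from the degree of any discriminant. If you want to salvage your approach, the move is to replace ``apply Timofte to $D$'' with ``apply Riener's constrained-optimization degree principle to $e_d$ along the fiber $\{e_1,\dots,e_{d-1}\ \text{fixed}\}$,'' which is precisely what the paper does.
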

\begin{proof}
    We need to show that if $p(x+t\vec{1})$ is not real rooted for some $x$, then there exists $x'$ with at most $d-1$ distinct entries so that $p(x'+t\vec{1})$ is not real rooted.

    We use induction on the degree $d$: if $d = 1$, then the result is vacuous in the sense that all nonzero symmetric linear polynomials are hyperbolic with respect to $\vec{1}$. Now, we may assume that the theorem holds for polynomials of degree $d-1$.
    In particular, if $D_{\vec{1}}p$ is not symmetric hyperbolic, then by the inductive hypothesis, there exists $v$ with at most $d-2$ distinct entries such that $D_{\vec{1}}p(v+t\vec{1})$ is not real-rooted, and therefore, $p(v+t\vec{1})$  is also not real-rooted. So, we may assume that $D_{\vec{1}}p$ is symmetric hyperbolic.

    Assume now that $p(x+t\vec{1})$ is not real rooted for some $x$.
    Since $p(x+t\vec{1})$ is not real-rooted while $D_{\vec{1}}p(x+t\vec{1})$ is real rooted, there exists either a local minimum of $p(x+t\vec{1})$ which is strictly positive, or a local maximum of $p(x+t\vec{1})$ which is strictly negative. By replacing $p$ by its negative if necessary, we may assume that there is a local minimum of $p(x+t\vec{1})$ which is strictly positive.
    
    Since $p$ is symmetric in $x$, each coefficient in $t$ of $p(x+t\vec{1})$ is symmetric, so there are polynomials $f_0, \dots, f_d$ such that 
    \[
        p(x+t\vec{1}) = \sum_{i=0}^d f_i(e_1(x),  \dots, e_i(x))t^{d-i}.
    \]
    By degree considerations, each $f_i$ is a polynomial which is linear in $e_i$. The only appearance of $e_d(x)$ in this expression is in the $t^0$ coefficient. Together, these two observations imply that there is a polynomial $q$ in $d$ variables so that 
    \[
        p(x+t\vec{1}) = c e_d(x) + q(e_1(x), e_2(x), \dots, e_{d-1}(x), t).
    \]
    
    Suppose that $x' \in \R^n$ has the property that $e_i(x) = e_i(x')$ for $i = 1, \dots, d-1$, then
    \begin{align*}
        p(x'+t\vec{1}) &= c e_d(x') + q(e_1(x'), e_2(x'), \dots, e_{d-1}(x'), t)\\
        &= c (e_d(x') - e_d(x)) + ce_d(x) + q(e_1(x), e_2(x), \dots, e_{d-1}(x), t)\\
        &=c(e_d(x')-e_d(x)) + p(x+t\vec{1}).
    \end{align*}

    Now, we note that by \cite{RIENER2012850}, for any $c \in \R$, there is an optimal solution to the following optimization problem with at most $d-1$ distinct entries:
    \begin{align}
        \argmax_{x'} &\; \{c e_d(x'):\;e_i(x') = e_i(x)\text{ for }i \in [d-1]\}.
    \end{align}

    Now, if $x'$ optimizes this, then $p(x'+t\vec{1})$ is not real rooted, since $p(x'+t\vec{1}) =  p(x'+t\vec{1}) + d$, for some $d > 0$, so that there is a local minimum of $p(x'+t\vec{1})$ which is strictly positive.
\end{proof}

\section{Hook-shaped Symmetric Hyperbolic Polynomials and Univariate Hyperbolicity Preservers}
\subsection{Background on Hyperbolicity Preservers}
We denote by $\R[t]_n$ the $n+1$ dimensional vector space of univariate polynomials of degree at most $n$.
A hyperbolicity preserver is a linear map $T:\R[t]_n \rightarrow \R[t]_d$ such that for every real rooted polynomial $g \in \R[t]_n$, $T(g)$ is real rooted.
A linear map $T:\R[t]_n \rightarrow \R[t]_d$ is diagonal if there exist $\gamma_0,\gamma_1, \dots, \gamma_d \in \R$ so that $T(x^{n-i})  = \gamma_i x^{d-i}$ for $i \le d$, and $T(x^{n-i}) = 0$ for $i > d$.

The following was shown in  \cite{schur1914zwei}:
\begin{thm}
\label{thm:schurpolya}
    Let $T : \R[t]_n \rightarrow \R[t]_d$ be a diagonal linear map.
    Then $T$ is a hyperbolicity preserver if and only if $T((x-1)^n)$ has real roots, all with the same sign.
\end{thm}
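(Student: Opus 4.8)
The plan is to encode the diagonal map $T$ by the single univariate polynomial $\Phi := T((1+x)^n)$ and to recognize the operation $g \mapsto T(g)$ as a Schur--Szeg\H{o} composition (a binomial-weighted Hadamard product) with $\Phi$, so that the classical Schur composition theorem can be applied. Writing $T(x^{n-i}) = \gamma_i x^{d-i}$ for $0 \le i \le d$ and $T(x^{n-i}) = 0$ for $i > d$, one has $\Phi(x) = \sum_{i=0}^{d} \binom{n}{i}\gamma_i\, x^{d-i}$, so the coefficient of $x^{d-i}$ in $\Phi$ is $\binom{n}{i}\gamma_i$; and a short computation shows that for any $g \in \R[t]_n$ the Schur--Szeg\H{o} composition of $x^{n-d}\Phi$ with $g$ equals $x^{n-d}\, T(g)$. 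Since $T((x-1)^n)$ and $\Phi$ differ only by the substitution $x \mapsto -x$, the condition in the statement is equivalent to the assertion that $\Phi$ has real roots all of the same sign; and since replacing each $\gamma_i$ by $(-1)^i\gamma_i$ conjugates $T$ by $x\mapsto -x$ (hence preserves being a hyperbolicity preserver) while sending $\Phi$ to $\Phi(-x)$, I may assume throughout that ``same sign'' means ``all roots $\le 0$''.

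For the ``if'' direction, assume every root of $\Phi$ lies in $(-\infty,0]$. Then $x^{n-d}\Phi$ also has all its roots in $(-\infty,0]$, in particular all of one sign, so Schur's composition theorem gives that its Schur--Szeg\H{o} composition with any real-rooted $g \in \R[t]_n$ is real-rooted; dividing that composition by $x^{n-d}$ only deletes roots at the origin, so $T(g)$ is real-rooted and $T$ is a hyperbolicity preserver. When $\Phi$ is not of full degree $d$, a routine limiting argument (replace $\Phi$ by $\Phi(x)\,(1+\varepsilon x)^{d-\deg\Phi}$, apply the above, and let $\varepsilon\to 0$, using that a coefficientwise limit of real-rooted real polynomials is real-rooted) reduces to the full-degree case.

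For the ``only if'' direction, suppose $T$ is a hyperbolicity preserver. Applying $T$ to the real-rooted polynomial $(1+x)^n$ shows at once that $\Phi$ is real-rooted, so only the sign condition is in doubt. For each $0 \le j \le d-2$ and each $a \in \R$ the polynomial $x^{n-j-2}(x^2-a^2)$ is real-rooted, and $T(x^{n-j-2}(x^2-a^2)) = x^{d-j-2}(\gamma_j x^2 - a^2\gamma_{j+2})$; real-rootedness of this for all $a$ forces $\gamma_j\gamma_{j+2}\ge 0$. Since the coefficients of $\Phi$ are the numbers $\binom{n}{j}\gamma_j$, this says precisely that any two coefficients of $\Phi$ at positions differing by $2$ have nonnegative product, and the sign condition will follow from the purely univariate statement: \emph{a real-rooted univariate polynomial, any two of whose coefficients at positions differing by $2$ have nonnegative product, has all its roots of the same sign.}

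To prove that univariate statement, let $P$ be such a polynomial. Factoring out the highest power of $x$ dividing $P$ preserves both real-rootedness and the hypothesis, so assume $P(0)\ne 0$, and after negating assume the constant coefficient is positive. If no coefficient of $P$ vanishes, the hypothesis forces the even-indexed coefficients all to share one sign and the odd-indexed ones all to share one sign, so $P(x)$ or $P(-x)$ has all coefficients positive; being real-rooted it then has all roots $\le 0$, so $P$ has same-signed roots. Otherwise some internal coefficient $c_j$ of $P$ (with $0 < j < \deg P$) vanishes; Newton's inequalities give $c_{j-1}c_{j+1}\le 0$ while the hypothesis gives $c_{j-1}c_{j+1}\ge 0$, so $c_{j-1}=0$ or $c_{j+1}=0$. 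Thus internal zero coefficients occur in runs of length at least $2$, creating a gap of length at least $3$ in the support of $P$; a short count shows that a gap of length $g$ between consecutive nonzero coefficients contributes at most $\min(g,2)$ to $V(P)+V(P(-x))$ (with $V(\cdot)$ the number of sign changes of the coefficient sequence), so summing over gaps, whose lengths add up to $\deg P$, a gap of length $\ge 3$ would give $V(P)+V(P(-x)) < \deg P$. But for a real-rooted polynomial with nonzero constant term the numbers of positive and negative roots equal $V(P)$ and $V(P(-x))$ respectively (the equality case of Descartes' rule of signs) and add up to $\deg P$, a contradiction; hence $P$ has no internal zero coefficient, and we are in the previous case. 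The main obstacle is this last combinatorial step, the bookkeeping of sign changes across support gaps; with it in hand, the ``if'' direction is a one-line application of Schur's composition theorem and the reduction in the ``only if'' direction is a direct computation.
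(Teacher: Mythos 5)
Your proof is correct, but note that the paper does not prove this statement at all: it is the classical P\'olya--Schur characterization of diagonal hyperbolicity preservers (multiplier sequences), quoted directly from \cite{schur1914zwei}. What you have reconstructed is essentially the classical argument. For sufficiency, recognizing $g\mapsto T(g)$ as the Szeg\H{o} (binomial-weighted) composition with $x^{n-d}\Phi$, where $\Phi=T((1+x)^n)$, and invoking the Malo--Schur--Szeg\H{o} composition theorem is exactly the standard route; your identity that the composition of $x^{n-d}\Phi$ with $g$ equals $x^{n-d}T(g)$ checks out, and the degenerate cases (roots of $x^{n-d}\Phi$ at the origin, $\deg\Phi<d$, $\deg g<n$) are all handled by the kind of coefficientwise limiting argument you flag --- you should state explicitly that it also covers the zero roots coming from the factor $x^{n-d}$ and low-degree $g$, since some formulations of the composition theorem assume full degree and strictly one-signed roots. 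For necessity, your test polynomials $x^{n-j-2}(x^2-a^2)$ giving $\gamma_j\gamma_{j+2}\ge 0$ are the same device the paper itself uses in its Lemma on $0$-sum preservers (where $T((t-1)(t+1)t^{n-2})$ is used), and your univariate lemma --- real-rooted with all distance-two coefficient products nonnegative implies same-signed roots --- is correct: the no-vanishing case is immediate, Newton's inequalities force internal zero coefficients to come in consecutive pairs, and the resulting support gap of length at least $3$ contradicts the equality $V(P)+V(P(-x))=\deg P$ that real-rootedness (with nonzero constant term) forces in Descartes' rule. The only caveats are cosmetic: the boundary interpretation of ``same sign'' when $\Phi$ has zero roots or drops degree should be fixed once and used consistently, and your argument silently assumes $d\le n$ (which is the only case relevant to the paper). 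So the comparison is simply: the paper defers to the literature, while you give a legitimate proof that reduces sufficiency to the (equally classical) composition theorem and proves necessity from scratch.
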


This result was greatly generalized in \cite{borcea2009polya} and \cite{borcea2009lee}, which considered not only real rooted univariate polynomials but also the more general multivariate stable polynomials.
For our purposes, we will only require the case of this theorem concerning diagonal maps between univariate polynomials.

\subsection{Associated Operators and 0-Sum Hyperbolicity Preservers}
In this subsection, we show \Cref{thm:hookop}, restated here.
\hookop*

\begin{proof}
    Recall the definition of the elementary symmetric means: $\tilde{e}_i(x) = \frac{1}{\binom{n}{i}} e_i(x)$.
    
    Fix $p \in \Gamma_{n,d}$, so that $p = \sum_{i=1}^d a_i\tilde{e}_1(x)^{d-i}\tilde{e}_i(x)$. Let $g(t) \in \R[t]_{n,0}$ be monic with roots $r_1, \dots, r_n \in \C$.
    We may write $g(t) = \prod_{i=1}^{n} (t - r_i) = \sum_{i=0}^n \binom{n}{k} c_i t^{n-i}$.

    Observe that that $c_i = \tilde{e}_i(r_1, \dots, r_n)$.
    Since $g \in \R[t]_{n,0}$, we have that $c_1 = 0$.
    Now, consider the associated operator 
    \[
        T(g) = p(\vec{r} - \vec{1}t) = \sum_{i=1}^d a_i\tilde{e}_1(\vec{r} - \vec{1}t)^{d-i}\tilde{e}_i(\vec{r} - \vec{1}t).
    \]
    It follows from Taylor expanding $\tilde{e}_i(\vec{r} - \vec{1}t)$ in $t$ that $\tilde{e}_i(\vec{r} - \vec{1}t) = \sum_{j=0}^i (-1)^{i-j}\binom{i}{j}\tilde{e}_j(\vec{r})t^{i-j}$, and in particular, $\tilde{e}_1(\vec{r} - \vec{1}t) = -t$.
    We then compute that
    \begin{align*}
        T(g) &= p(\vec{r} - \vec{1}t)\\
        &= \sum_{i=1}^d a_i\left(\sum_{j=0}^i(-1)^{d-j}\binom{i}{j}\tilde{e}_j(\vec{r})t^{d-j}\right)\\
        &= 
        \sum_{j=0}^d \left(\sum_{i=j}^d(-1)^{d-j} \binom{i}{j}a_i\right)\tilde{e}_j(\vec{r})t^{d-j}\\
        &= 
        \sum_{j=0}^d \left(\sum_{i=j}^d(-1)^{d-j} \binom{i}{j}a_i\right)c_jt^{d-j}
    \end{align*}
    That is, $T(g) = \sum_{i=0}^d \gamma_j c_j t^{d-j}$, where $ \gamma_j = \frac{1}{\binom{n}{j}}\sum_{i=j}^d(-1)^{d-j}\binom{i}{j} a_i$.
    
    After extending by linearity to all elements of $\R[t]_{n,0}$, $T$ is a diagonal linear map.
    It is not hard to see that the linear map that sends $(a_1, \dots, a_d)$ to $(\gamma_1, \dots, \gamma_d)$ is upper triangular with nonzero diagonal entries, and therefore invertible. We see then that the map $A$ is linear and bijective.

    We now show that $T$ is a 0-sum hyperbolicity preserver if and only if $p$ is symmetric hyperbolic.
    
    If $p$ is symmetric hyperbolic, and $g = \prod_{i=1}^n (t-r_i)$ is monic, then $p(\vec{r} - \vec{1}t)$ has only real roots, which is exactly saying that $T(g)$ has only real roots. If $g \in \R[t]_{n,0}$ is not monic, then we obtain that $T(g)$ is real rooted by homogeneity and continuity of $T$.

    On the other hand, let $T(g)$ be real rooted for all $g \in \R[t]_{n,0}$ with real roots.
    If $x \in \R^n$ with $e_1(x) = 0$, we find that $T(\prod_{i=1}^n (t-x_i)) = p(x - t \vec{1})$ has only real roots.
    To show that $p$ is hyperbolic, fix $x \in \R^n$, and consider the univariate polynomial $p(x - t \vec{1})$. Letting $\hat{x} = x - \tilde{e}_1(x) \vec{1}$, with $e_1(\tilde{x}) = 0$,  we obtain $p(x - t \vec{1}) = p(\tilde{x} - (t-\tilde{e}_1(x)) \vec{1})$, which is real rooted since $p(\tilde{x} - t \vec{1})$ is.
\end{proof}
\subsection{A Quintic Example}
Here, we exhibit a quintic symmetric polynomial which will be of interest to us.
\begin{lemma}
    Let $p$ be the symmetric polynomial in $n \ge 5$ variables defined by
    \[
        p = 6 \tilde{e}_5 - \frac{22}{3} \tilde{e}_1 \tilde{e}_4 + \frac{7}{3} \tilde{e}_1^2 \tilde{e}_3.
    \]
    $p$ is symmetric hyperbolic. If $n= 5$, then $p$ is not SOS-hyperbolic.
\end{lemma}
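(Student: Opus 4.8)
The plan is to prove the two assertions separately: for symmetric hyperbolicity I would reduce to $n=5$ and then pass through the first two derivatives of $p$ in the direction $\vec{1}$; for the failure of SOS-hyperbolicity I would produce an explicit mixed derivative $\Delta_{uw}p$, with $u,w$ in the hyperbolicity cone, that is globally nonnegative but not a sum of squares.

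Since $p$ is a fixed polynomial in $\tilde{e}_1,\dots,\tilde{e}_5$, \Cref{lem:n_bigger} shows it suffices to prove $p$ is symmetric hyperbolic when $n=5$. Using $D_{\vec{1}}\tilde{e}_k=k\,\tilde{e}_{k-1}$ one checks that $D_{\vec{1}}p$ is again hook-shaped of degree $4$, so $D_{\vec{1}}p\in\Gamma_{5,4}$; by \Cref{thm:quartics} its symmetric hyperbolicity amounts to the statement that the univariate quartic $q_0(t-\frac{1}{5})$ — where $q_0(t)=D_{\vec{1}}p(u+t\vec{1})$ for a coordinate vector $u$ — is real rooted with at least three roots of the same sign, which I would settle by a discriminant computation. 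Granting this, fix $x\in\R^5$ and set $q(t)=p(x+t\vec{1})$, so that $q'(t)=D_{\vec{1}}p(x+t\vec{1})$ has only real roots; hence all four critical points of the quintic $q$ are real, and for generic $x$ they are distinct, $t_1<t_2<t_3<t_4$, with $t_1,t_3$ local maxima and $t_2,t_4$ local minima. At each $t_i$ we have $q'(t_i)=0$, so
\[
  \Delta_{\vec{1}\vec{1}}p(x+t_i\vec{1}) = q'(t_i)^2 - q(t_i)\,q''(t_i) = -\,q(t_i)\,q''(t_i).
\]
If $\Delta_{\vec{1}\vec{1}}p\geq 0$ everywhere, this forces $q(t_1),q(t_3)\geq 0$ and $q(t_2),q(t_4)\leq 0$, and tracing $q$ across the five intervals cut out by $t_1,\dots,t_4$ then produces five real roots, so $q$ is real rooted; the non-generic $x$ then follow by continuity of the roots. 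Thus it remains to show $\Delta_{\vec{1}\vec{1}}p\geq 0$ on all of $\R^5$, for which I would exhibit an explicit sum-of-squares decomposition of this symmetric octic, obtained from a symmetry-reduced semidefinite program and verified in exact arithmetic; producing this certificate is the main computation of this half.

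For the last claim I would keep $n=5$ and use that $p$ is SOS-hyperbolic only if $\Delta_{uw}p$ is a sum of squares for every $u,w\in H_{\vec{1}}(p)$; since $\Delta_{\vec{1}\vec{1}}p$ is already SOS, the obstruction must sit elsewhere in the cone. I would search for an explicit pair $u,w\in H_{\vec{1}}(p)$ — for instance $w=\vec{1}$ together with a direction $u$ near $\partial H_{\vec{1}}(p)$, or a pair of such near-boundary directions — for which the octic $\Delta_{uw}p$, which is automatically nonnegative, is not a sum of squares, and I would certify this with a dual solution: a pseudo-moment functional $L$ on degree-$8$ forms with positive semidefinite truncated moment matrix but $L(\Delta_{uw}p)<0$, again found by a semidefinite program (symmetry-reduced by the stabilizer of $u$) and checked exactly. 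Because being a non-sum-of-squares is an open condition, $u$ and $w$ may first be located on the closure of $H_{\vec{1}}(p)$ and then nudged into its interior. I expect the conceptual steps — reduction to $n=5$, the descent through $D_{\vec{1}}p$ and $\Delta_{\vec{1}\vec{1}}p$, the mixed-derivative obstruction, and the openness of non-SOS — to be routine; the real obstacle is computational, namely producing the two semidefinite certificates (the SOS representation of $\Delta_{\vec{1}\vec{1}}p$ and, since the correct pair $(u,w)$ is not dictated by symmetry, the separating functional for $\Delta_{uw}p$) in a form that can be verified by hand, with the permutation symmetry used to block-diagonalize both programs.
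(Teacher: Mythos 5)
Your proposal follows essentially the same route as the paper: reduce to $n=5$ via \Cref{lem:n_bigger}, verify $D_{\vec{1}}p$ is hyperbolic via \Cref{thm:quartics}, show $\Delta_{\vec{1}\vec{1}}p$ is a sum of squares by a (symmetry-reduced) SDP and deduce hyperbolicity from the univariate lemma, then exhibit a mixed derivative $\Delta_{\vec{1}w}p$ that is not SOS; the paper carries this out with \texttt{SumsOfSquares.jl} and the explicit witness $w=(6,1,1,1,1)$. The only difference is that you spell out the elementary sign-counting argument behind the implication ``$D_{\vec{1}}p$ hyperbolic and square-free $+$ $\Delta_{\vec{1}\vec{1}}p\geq 0$ $\Rightarrow$ $p$ hyperbolic,'' which the paper states without proof.
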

\begin{proof}
Our proof of both of these facts about $p$ will rely heavily on explicit computations.
    By \Cref{lem:n_bigger}, it suffices to show that $p$ is hyperbolic when $n = 5$.

    In this case, we note that $D_{\vec{1}}p$ is irreducible, and in particular, it is square free. It can be verified using \Cref{thm:quartics}  that $D_{\vec{1}}p$ is symmetric hyperbolic.

    It can be seen by reduction to the univariate case that if $p$ is a homogeneous polynomial so that $D_{\vec{1}}p$ is hyperbolic and square-free, and $\Delta_{\vec{1}\vec{1}}p$ is globally nonnegative, then $p$ is hyperbolic. 
    By using the \texttt{SumsOfSquares.jl} \cite{legat2017sos} package we checked directly that $\Delta_{\vec{1}\vec{1}}p$ is a sum of squares, which implies that $p$ is hyperbolic. Also using the package we verified that for $n = 5$, $\Delta_{\vec{1}w}p$ is not a sum of squares, where $w = (6,1,1,1,1)$.
\end{proof}
\begin{rem}
    This Lemma, together with \Cref{lem:nonextendable} shows Theorem \ref{thm:quintic_example}.
\end{rem}
\section{0-Sum Hyperbolicity Preservers}
\subsection{Sign Conditions and $g_0(t)$}
In this section, we use properties of the univariate polynomial
\[
g_0(t) = (x+n-1)(x-1)^{n-1} = \sum_{k=0}^n -(k-1)\binom{n}{k}t^{n-k} \in \R[t]_{n,0}.
\]
We first show is a necessary condition for a diagonal map to be a 0-sum hyperbolicity preserver.
\begin{lemma}
    \label{lem:necessary}
    Let $T : \R[t]_{n,0}\rightarrow \R[t]_{d,0}$ be a 0-sum hyperbolicity preserver. Then $T(g_0(t))$ has at least $d-1$ roots which have the same sign.
\end{lemma}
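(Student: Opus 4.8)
Real-rootedness of $T(g_0)$ is automatic: $g_0 = (t-1)^{n-1}(t+n-1)$ has only real roots and they sum to $0$, so $g_0 \in \hyp_{n,0}$ and hence $T(g_0)\in\hyp_{d,0}$. The content is the sign condition, which I would first make concrete. Writing $\vec r_0 = (1,\dots,1,-(n-1))$ for the roots of $g_0$ and observing $\vec r_0 = \vec 1 - nu$ for $u$ any coordinate vector, the definition of the associated operator of the (unique, by \Cref{thm:hookop}) hook-shaped symmetric hyperbolic $p$ with $T_p = T$ gives
\[
    T(g_0)(t) = p(\vec r_0 - \vec 1 t) = (-n)^d\, q\!\left(\tfrac{t-1}{n}\right), \qquad q(\tau) := p(u + \tau\vec 1).
\]
As $p$ is hyperbolic with respect to $\vec 1$, $q$ has exactly $d$ real roots and leading coefficient $p(\vec 1)\ne 0$, so $\deg T(g_0) = d$ with root sum $0$; hence $T(g_0)$ lacks $d-1$ roots of one sign exactly when it has at least two strictly positive \emph{and} at least two strictly negative roots. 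That is what must be excluded.

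The plan rests on perturbations of $g_0$ that stay in $\hyp_{n,0}$. Since $\R[t]_{n-2}\subseteq\R[t]_{n,0}$ and every real-rooted polynomial of degree $\le n-2$ lies in $\hyp_{n,0}$, the restriction $T|_{\R[t]_{n-2}}$ is a \emph{diagonal} hyperbolicity preserver $\R[t]_{n-2}\to\R[t]_{d-2}$, so by \Cref{thm:schurpolya} the polynomial $R := T((t-1)^{n-2})$ has only real roots, all of one sign. Also $g_0 + s(t-1)^{n-2} = (t-1)^{n-2}\big[(t-1)(t+n-1) + s\big]$, whose bracketed quadratic has discriminant $n^2 - 4s$, so $g_0 + s(t-1)^{n-2}\in\hyp_{n,0}$ for $s\le n^2/4$ and therefore $T(g_0) + sR$ is real-rooted for all $s\le n^2/4$. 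Assume now that $T(g_0)$ has $\ge 2$ strictly positive and $\ge 2$ strictly negative roots. If $a_d = 0$ then $T(g_0)$ vanishes at $0$ (one computes $q(-\tfrac1n) = p(u - \tfrac1n\vec 1)$ is a nonzero multiple of $a_d$), the map $g\mapsto T(g)/t$ is a $0$-sum hyperbolicity preserver of degree $d-1$, and induction on $d$ — together with the fact that a root at $0$ counts for either sign — settles this case. So take $a_d \ne 0$; then $T(g_0)$ has $d$ nonzero real roots, hence at least three on one side if $d\ge 5$, and exactly two on each side if $d = 4$. Inspecting $T(g_0) + sR$ as $s\to-\infty$ forces $\deg R = d-2$ and that $R$ and $T(g_0)$ have leading coefficients of the same sign; normalizing $p$ up to sign so that $R$ is positive on the open half-line $H$ containing no root of $R$, one checks that if $T(g_0)$ has at least three roots in $H$ then a pair of consecutive such roots $a<b$ bounds an interval on which both $T(g_0)$ and $R$ are positive, so $T(g_0) + sR$ has two roots in $(a,b)$ for small $s<0$ and none for $s\ll 0$; those roots collide and leave $\R$, contradicting real-rootedness for $s\le n^2/4$.

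The remaining configurations — where the excess roots of $T(g_0)$ lie on the same half-line as the roots of $R$, and the $d=4$ ``two-and-two'' case — are the main obstacle. For these a single $(t-1)^{n-2}$-perturbation is too weak, and I would enlarge the family: the perturbations $g_0 + \sum_{j\ge 2} s_j (t-1)^{n-j}$, with each $T((t-1)^{n-j})$ again single-signed by \Cref{thm:schurpolya}, combined with \Cref{lem:n_bigger} to let $n$ grow, should supply enough single-signed polynomials to trap a pair of real roots of the resulting multi-parameter pencil against each other. Making this precise requires a careful description of the (half-space, discriminant-cut) region on which that pencil is forced to remain real-rooted, which I expect to be the technical heart of the argument.
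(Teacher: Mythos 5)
Your argument is genuinely incomplete, and you say so yourself: the pencil argument with $R = T((t-1)^{n-2})$ only rules out the configurations in which $T(g_0)$ has at least three roots on the open half-line \emph{free} of roots of $R$. The cases you defer are not edge cases but the heart of the matter: every potential counterexample with $d=4$ has exactly two roots of each sign, and for $d\ge 5$ the three ``excess'' roots may all lie on the same side as the roots of $R$, where $R$ changes nothing about root collisions. The proposed repair (perturbing by $\sum_{j\ge 2} s_j(t-1)^{n-j}$, invoking \Cref{lem:n_bigger} to let $n$ grow, and describing the region of the multi-parameter pencil on which real-rootedness is forced) is a plan, not a proof; no mechanism is given for trapping roots in those configurations, so the lemma is not established. (Secondary, fixable issues: the reduction through the hook-shaped polynomial $p$ and the claim $\deg T(g_0)=d$ lean on the non-degeneracy $p(\vec 1)\neq 0$, the $a_d=0$ induction has no base case spelled out, and the sign-alternation step between consecutive roots needs a word about multiple roots.)

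For comparison, the paper's proof is far more elementary and avoids root dynamics entirely: since $T$ is diagonal and every coefficient of $g_0$ other than the leading one is nonpositive, the sign pattern of the coefficients of $T(g_0)$ is read off from the signs of the $\gamma_i$. Applying $T$ to the single test polynomial $(t-1)(t+1)t^{n-2}\in\hyp_{n,0}$ gives $\gamma_0 t^d-\gamma_2 t^{d-2}$, forcing $\gamma_0$ and $\gamma_2$ to share a sign; restricting $T$ to $\R[t]_{n-2}\subseteq\R[t]_{n,0}$ gives an honest diagonal hyperbolicity preserver, and \Cref{thm:schurpolya} applied to it (the same observation you make, but used differently) pins down the signs of $\gamma_2,\gamma_3,\dots,\gamma_d$ up to alternation. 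Descartes' rule of signs then shows $T(g_0)$ has at most one root of one sign, i.e.\ at least $d-1$ of the other. You had both key ingredients (the image of a degenerate zero-sum polynomial, and Polya--Schur for the restriction) but used the second only to manufacture one perturbing polynomial rather than to control the full coefficient sign pattern; redirecting it that way closes the gap and also disposes of your $a_d=0$ and multiplicity worries for free.
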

\begin{proof}
    Suppose that $T(x^{n-i}) = \gamma_i x^{d-i}$ for each $i$.

    Note that the only positive coefficient of $g_0(t)$ is that of $t^n$.
    Because $T(g_0(t))$ is real rooted, it suffices by Descartes' rule of signs that the $\gamma_i$ either all have the same sign or they alternate in sign.

    We first claim that $\gamma_0$ has the same sign as $\gamma_2$. To see this, note that 
    \[
        T((t-1)(t+1)t^{n-2}) = \gamma_0 t^d - \gamma_2 t^{d-2} = t^{d-2}(\gamma_0 t^2-\gamma_2).
    \]
    This has real roots if and only if $\gamma_0$ and $\gamma_2$ have the same sign.

    Now, consider the diagonal linear map $T' : \R[t]_{n-2} \rightarrow \R[t]_{d-2}$ given by $T'(g) = T(g)$ for $g \in \R[t]_{n-2}$.
    We have that $T'$ is a hyperbolicity preserver, so  \Cref{thm:schurpolya} implies that the coefficients of $T'((x+1)^{n-2})$ have real roots of the same sign, which implies by Descarte's rule of signs that $\gamma_{i}$ has the same sign as $\gamma_{i+2}$ for each $i \ge 2$.
\end{proof}

\subsection{An Equivalent Condition of Extendability for 0-Sum Hyperbolicity Preservers}
Let $T : \R[t]_{n,0} \rightarrow \R[t]_{d,0}$ be a 0-sum hyperbolicity preserver.
Recall that $T$ is extendable if there exists $\hat{T} : \R[t]_n \rightarrow \R[t]_d$ such that $\hat{T}$ preserves hyperbolicity and $\hat{T}(g) = T(g)$ for all $g \in \R[t]_{n,0}$.
We will give an equivalent characterization of extendability in this section and then later use it to show \Cref{thm:extendable}.

Note that if $T : \R[t]_{n,0} \rightarrow  \R[t]_{d,0}$ is diagonal, then it is uniquely determined by the image of $g_0(t)$ because all of the coefficients of $g_0(t)$ besides that of $t^{n-1}$ are nonzero.
If $g \in \R[t]_{d,0}$, we will let $T_g$ denote the unique diagonal map so that $T_{g}(g_0(t)) = g(t)$. 

We now define a family of maps which play an important role in our investigation of hyperbolicity preservers.

\begin{definition}
Let $\delta_n :\R[t]_n \rightarrow \R[t]_{n,0}$ be the diagonal linear map defined by $$\delta_n(t^{n-k}) = -(k-1)t^{n-k}$$ for all $k \in [n].$  
\end{definition}
Observe that $\delta_n((t-1)^n) = g_0(t)$. Moreover, for any diagonal map $T : \R[t]_{n} \rightarrow \R[t]_{d}$, $T(\delta_n(g)) = \delta_d(T(g))$,
since the coefficient of $\delta_n(t^{n-k})$ does not depend on $n$.

\begin{lemma}
\label{lem:roots_extendable}
    Let $g \in \R[t]_{d,0}$.
    The map $T_g$ is extendable if and only if there exists $f \in \R[t]_d$ such that $\delta_d(f) = g$ and $f$ has real roots, all of which have the same sign.
\end{lemma}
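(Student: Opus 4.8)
The plan is to set up a bijection between extensions $\hat T$ of $T_g$ and factorizations of $g$ through $\delta_d$, and then invoke the Schur--Pólya criterion (\Cref{thm:schurpolya}) to translate hyperbolicity-preservation of $\hat T$ into the root condition on the preimage $f$. First I would observe that a diagonal map $\hat T : \R[t]_n \to \R[t]_d$ is the restriction-extension data we want precisely when $\hat T|_{\R[t]_{n,0}} = T_g$; since $T_g$ is itself diagonal and determined by its values $\gamma_1,\dots,\gamma_d$ on $t^{n-1},\dots,t^{n-d}$ (with $t^{n-1}$ excepted, as $\R[t]_{n,0}$ omits that coefficient), extending $T_g$ amounts to choosing the single remaining scalar $\gamma_1$, the coefficient of $\hat T(t^{n-1})$. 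So extensions of $T_g$ form a one-parameter family, and I would make this parametrization explicit.

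Next I would use the identity $T(\delta_n(h)) = \delta_d(T(h))$ for diagonal maps, noted in the excerpt, together with $\delta_n((t-1)^n) = g_0(t)$. Given an extension $\hat T$, set $f := \hat T((t-1)^n) \in \R[t]_d$. Then
\[
\delta_d(f) = \delta_d(\hat T((t-1)^n)) = \hat T(\delta_n((t-1)^n)) = \hat T(g_0(t)) = T_g(g_0(t)) = g,
\]
using that $g_0 \in \R[t]_{n,0}$ so $\hat T$ and $T_g$ agree on it. Conversely, given any $f \in \R[t]_d$ with $\delta_d(f) = g$, I would define $\hat T$ to be the unique diagonal map sending $(t-1)^n$ to $f$ (this determines all the $\gamma_i$ since every coefficient of $(t-1)^n$ is nonzero), and check that its restriction to $\R[t]_{n,0}$ is $T_g$: indeed $\hat T(g_0) = \hat T(\delta_n((t-1)^n)) = \delta_d(\hat T((t-1)^n)) = \delta_d(f) = g = T_g(g_0)$, and two diagonal maps on $\R[t]_{n,0}$ agreeing on $g_0$ are equal. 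This establishes a bijection between extensions $\hat T$ of $T_g$ and polynomials $f \in \R[t]_d$ with $\delta_d(f) = g$, compatible with $f = \hat T((t-1)^n)$.

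Finally I would apply \Cref{thm:schurpolya}: the diagonal map $\hat T$ is a hyperbolicity preserver if and only if $\hat T((t-1)^n) = f$ has real roots all of the same sign. Combining with the bijection: $T_g$ is extendable (i.e. some extension $\hat T$ is a hyperbolicity preserver) iff there is some $f \in \R[t]_d$ with $\delta_d(f) = g$ and $f$ real-rooted with all roots of the same sign, which is exactly the claim. The one point requiring a little care — and the likeliest source of a gap — is the degenerate situation where $g$ has degree strictly less than $d$, so that $\gamma_d = 0$ (or more generally some leading $\gamma_i$ vanish): I need to confirm that "$\delta_d(f) = g$" and the Schur--Pólya statement still behave correctly when $f$ or $g$ is not of full degree, i.e. that $\delta_d$ maps $\R[t]_d$ onto the relevant coefficient data and that \Cref{thm:schurpolya} is being applied with the correct convention for lower-degree polynomials. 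I expect this to be routine given that $\delta_d$ is diagonal with the single zero multiplier on $t^{d-1}$, but it is the step I would write out most carefully.
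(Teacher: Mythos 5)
Your proposal is correct and follows essentially the same route as the paper's proof: both directions are handled by setting $f = \hat T((t-1)^n)$ and using the commutation identity $\hat T \circ \delta_n = \delta_d \circ \hat T$ together with $\delta_n((t-1)^n) = g_0$, then invoking \Cref{thm:schurpolya}. The extra framing as a bijection between extensions and preimages under $\delta_d$, and the explicit flag about lower-degree degeneracies, are reasonable elaborations but do not change the argument.
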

\begin{proof}
    First suppose that $T_g$ is extendable, so there exists a diagonal hyperbolicity preserver $\hat{T}$ with $\hat{T}(g) = T(g)$ for all $g \in \R[t]_{n,0}$.

    We use the properties of $\delta_d$ to show that
    \begin{align*}
    g(t) &=  T_g(g_0(t))\\
        &=  \hat{T}(g_0(t))\\
        &=  \hat{T}(\delta_n((x-1)^n))\\
        &=  \delta_d(\hat{T}((x-1)^n))
    \end{align*}
    Let $f = \hat{T}((x-1)^n)$. By \Cref{thm:schurpolya}, $f$ has real roots with the same sign, and $g = \delta_d(f)$, as desired.

    Next suppose that there exists an $f$ with real roots of the same sign and such $\delta_n(f) = g$.

    We let $\hat{T}$ be unique diagonal linear map sending $(x-1)^n$ to $f$. 
    By \Cref{thm:schurpolya}, $\hat{T}$ is then a hyperbolicity preserver, and moreover
    \[
        \hat{T}(g_0) = 
        \hat{T}(\delta_n((x-1)^n)) = 
        \delta_d(\hat{T}((x-1)^n)) = 
        \delta_d(f) = g.
    \]
    Therefore, $\hat{T}$ must restrict to $T_g$ on $\R[t]_{n,0}$ giving our desired extension.
\end{proof}

We have now seen the importance of the map $\delta_n$ in understanding extendable linear maps. We will give some more properties of this map in the next part.

\subsection{Properties of the map $\delta_n$ and a nonextendable map}
Let $R_n : \R[x]_n \rightarrow \R[x]_n$ be the map that reverses the order of the coefficients, i.e. $R_n(p) = t^n p\left(\frac{1}{t}\right)$.
and let 
\[
D(p) = R_{n-1} \left(\frac{d}{dt}R_n(p)\right).
\]

We can also view the operator $D$ in the following way:  homogenize $p$ to a bivariate homogeneous polynomial $\bar{p}(t,s)$, then $D(p)$ is $\frac{d}{ds}\bar{p}$ evaluated at $s=1$. More explicitly:
$$D(p)=\left[\frac{d}{ds}\bar{p}\right]_{s=1}.$$

\begin{lemma}
\label{lem:factoring}
\[
    \delta_n(p) = p  -  D(p)
\]
\end{lemma}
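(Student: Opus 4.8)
The plan is to verify the identity $\delta_n(p) = p - D(p)$ by checking it on the monomial basis $\{t^{n-k} : k = 0, 1, \dots, n\}$ of $\R[t]_n$ and invoking linearity. Both $\delta_n$ and $D$ are linear maps (the former by definition, the latter as a composition of $R_n$, $\frac{d}{dt}$, and $R_{n-1}$, all linear), so it suffices to establish the equality coefficient-wise on monomials.

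First I would compute $D(t^{n-k})$ directly from the homogenization description, which is the cleanest route: the homogenization of $t^{n-k}$ to a bivariate form of degree $n$ is $\bar{p}(t,s) = t^{n-k}s^{k}$. Then $\frac{d}{ds}\bar{p} = k\, t^{n-k}s^{k-1}$, and setting $s = 1$ gives $D(t^{n-k}) = k\, t^{n-k}$ (with the understanding that this is $0$ when $k = 0$, consistent with $\frac{d}{ds}$ of a constant-in-$s$ term). Hence $t^{n-k} - D(t^{n-k}) = (1 - k)t^{n-k} = -(k-1)t^{n-k}$, which is exactly $\delta_n(t^{n-k})$ by the definition of $\delta_n$. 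Since this holds for every $k \in \{0, 1, \dots, n\}$, linearity gives $\delta_n(p) = p - D(p)$ for all $p \in \R[t]_n$.

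If one prefers to avoid the homogenization picture and work from the stated formula $D(p) = R_{n-1}\!\left(\frac{d}{dt}R_n(p)\right)$, the same computation runs as follows: $R_n(t^{n-k}) = t^n \cdot (1/t)^{n-k} = t^{k}$; then $\frac{d}{dt}t^{k} = k\, t^{k-1}$, a polynomial of degree at most $n-1$; then $R_{n-1}(k\, t^{k-1}) = k\, t^{n-1} \cdot (1/t)^{k-1} = k\, t^{n-k}$. This recovers $D(t^{n-k}) = k\, t^{n-k}$ and the rest is as above; one should note the edge case $k=0$, where $\frac{d}{dt}R_n(t^n) = \frac{d}{dt}(1) = 0$, so $D(t^n) = 0 = 0 \cdot t^n$, still matching.

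I do not anticipate a genuine obstacle here — the statement is essentially a bookkeeping identity — but the one point requiring care is the treatment of $R_{n-1}$ applied to a polynomial of degree strictly less than $n-1$ (which happens for every monomial except $t^{n-1}$ itself, i.e. the $k=1$ case). One must make sure the reversal convention $R_{m}(q) = t^{m} q(1/t)$ is applied with the correct degree parameter $m = n-1$ at the second step, rather than with the degree of $q$; this is exactly what pads the result back up to degree $n$ and makes the identity come out with the clean coefficient $k$. Stating the monomial computation carefully and then appealing to linearity is the whole proof.
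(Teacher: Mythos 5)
Your proposal is correct and follows the same route as the paper: compute $D$ on the monomial basis (the paper gets $D(t^k) = (n-k)t^k$, equivalently $D(t^{n-k}) = k\,t^{n-k}$), subtract from the identity, and conclude by linearity. Your extra care about the degree parameter in $R_{n-1}$ and the $k=0$ edge case is sensible bookkeeping but does not change the argument.
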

\begin{proof}
    Note that $D(p)$ is a diagonal map, as
    \[
        D(t^k) = R_{n-1}\left(\frac{d}{dt}t^{n-k}\right) = 
        R_{n-1}\left((n-k)t^{n-k-1}\right) = 
       (n-k)t^{k}.
    \]
    We have that $p - D(p)$, when applied to $t^{n-k}$ gives
    \[
        t^{n-k} - D(t^{n-k}) = (1-k)t^{n-k} = \delta_n(t^{n-k}),
    \]
    which proves the claim by linearity.
\end{proof}
We will see that $\delta_n$ in fact has a number of interesting properties in relation to the multiplicities of roots.

We recall that if $g \in \hyp_n$ and $q \in \hyp_{n-1}$, then $g$ and $q$ \emph{interlace} if 
\[
    r_1 \le s_1 \le r_2 \le s_2 \dots \le s_{n-1} \le r_n,
\]
where $r_i$'s are the roots of $g$ and $s_i$'s are the roots of $q$. We say that $g$ and $q$ \emph{strictly interlace} if they interlace and all of the above inequalities are strict.

\begin{lemma}
\label{lem:interlace_delta}
    For all $p \in \hyp_n$ with nonnegative roots , the polynomials $D(p)$ and $p$ interlace, and also $D(p)$ and $\delta_n(p)$ interlace. 
\end{lemma}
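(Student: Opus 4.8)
The plan is to rewrite $D$ and $\delta_n$ in a convenient analytic form, reduce to the generic case of a degree‑$n$ polynomial with distinct positive roots, run a direct sign‑change (intermediate value) count for each of the two interlacing claims, and then recover the general case by continuity.

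First I would record two identities. Homogenizing $p$ to $\bar p(t,s)$ and applying Euler's relation $t\,\partial_t\bar p + s\,\partial_s\bar p = n\bar p$ gives, at $s=1$,
\[
    D(p) = \left[\tfrac{d}{ds}\bar p\right]_{s=1} = n\,p - t\,p',
\]
so $D$ is the polar derivative of $p$ at the point $0$; and by \Cref{lem:factoring}, $\delta_n(p) = p - D(p)$. Note $D(p)$ always has degree $\le n-1$, with degree exactly $n-1$ precisely when the coefficient of $t^{n-1}$ in $p$ — which equals $-a\sum r_i$ for $p=a\prod(t-r_i)$ — is nonzero; and $\delta_n(p)$ has degree $n$ with the same leading coefficient $a$ as $p$ (since $D(p)$ has strictly smaller degree). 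For the reduction: if $0$ is a root of $p$ of multiplicity $m$, write $p=t^m q$ with $q(0)\neq 0$; a one‑line computation from $D(p)=np-tp'$ gives $D(p)=t^m D(q)$ and hence $\delta_n(p)=t^m\delta_{n-m}(q)$, and since $q$ has degree $n-m$ with all roots positive, the two interlacing assertions for $(p,D(p),\delta_n(p))$ follow from those for $(q,D(q),\delta_{n-m}(q))$. The degenerate case $p=at^n$ (where $D(p)\equiv 0$) is trivial. Finally, $D$ and $\delta_n$ are continuous and roots vary continuously with coefficients among polynomials of a fixed degree, while (non‑strict) interlacing is a closed condition; since polynomials of degree exactly $n$ with distinct positive roots are dense among degree‑$n$ polynomials with positive roots, it suffices to prove \emph{strict} interlacing in that generic case.

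So assume $0<r_1<\dots<r_n$ are the roots of $p$ and, after scaling, $a>0$. Then $p'(r_i)=a\prod_{j\neq i}(r_i-r_j)$ has sign $(-1)^{n-i}$, so $D(p)(r_i)=np(r_i)-r_i p'(r_i)=-r_i p'(r_i)$ has sign $(-1)^{n-i+1}$; these alternate with $i$. Since $\sum r_i>0$, $D(p)$ has degree exactly $n-1$, so the $n-1$ sign changes among $r_1,\dots,r_n$ account for all its roots $u_1<\dots<u_{n-1}$, with $r_i<u_i<r_{i+1}$: strict interlacing of $D(p)$ and $p$. For the second claim, use $\delta_n(p)=p-D(p)$ and $D(p)(u_j)=0$ to get $\delta_n(p)(u_j)=p(u_j)$, which for $u_j\in(r_j,r_{j+1})$ has sign $(-1)^{n-j}$, again alternating; this yields a root of $\delta_n(p)$ in each of the $n-2$ intervals $(u_j,u_{j+1})$. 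Since $\deg\delta_n(p)=n$ with positive leading coefficient, $\delta_n(p)(+\infty)>0$ while $\delta_n(p)(u_{n-1})$ has sign $-1$, giving a root in $(u_{n-1},\infty)$; and $\delta_n(p)(-\infty)$ has sign $(-1)^n$ while $\delta_n(p)(u_1)=p(u_1)$ has sign $(-1)^{n-1}$, giving a root in $(-\infty,u_1)$. That is $n$ real roots, hence all of them, interlacing the $u_j$. (Alternatively, the first claim is an instance of Laguerre's theorem, and the second follows because the polynomials interlacing a fixed real‑rooted polynomial form a cone, using $W[D(p),\delta_n(p)] = W[D(p),p]$ together with the constancy of the latter Wronskian.)

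The step needing the most care is the bookkeeping around repeated roots and the root at $0$: one must check that the factorization $p=t^m q$ really commutes with $D$ and $\delta_n$ with matching multiplicities, and that the limiting argument does not drop degrees — which is guaranteed here because keeping all roots positive keeps $\sum r_i$ (the leading coefficient of $D(p)$, up to sign) bounded away from $0$. Everything else is a routine intermediate‑value count.
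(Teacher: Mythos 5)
Your proof is correct, but it takes a genuinely different route from the paper. The paper works through the reversal operator: it shows that $D(p)$ and $p$ interlace by observing that $R_n$ sends the roots $r_i$ to $r_i^{-1}$, invokes Rolle's theorem for $\frac{d}{dt}(R_n p)$ versus $R_n p$, and pulls the interlacing back through $R_n$; it then gets the second interlacing for free from Obreschkoff's theorem, since $\alpha\,\delta_n(p) + \beta\,D(p) = \alpha p + (\beta-\alpha)D(p)$ is a linear combination of the interlacing pair $p, D(p)$. You instead extract the polar-derivative identity $D(p) = np - tp'$ from the Euler relation, and then do an explicit intermediate-value/sign-change count, first locating the $n-1$ roots of $D(p)$ strictly between consecutive $r_i$, and then evaluating $\delta_n(p) = p - D(p)$ at those roots to locate all $n$ roots of $\delta_n(p)$. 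The payoff of your route is that it is entirely elementary and self-contained (no appeal to Obreschkoff/HKO), and the identity $D(p) = np - tp'$ makes the reduction to positive roots via $p = t^m q$, $D(p) = t^m D(q)$ transparent; the paper's route is shorter once one is willing to cite Obreschkoff, and the reversal picture makes it visually obvious why $D$ behaves like a derivative on the reversed roots. One small thing to tighten in your write-up: in the parenthetical alternative you assert constancy of the Wronskian $W[D(p),p]$; what the argument actually uses (and what is true) is that this Wronskian is sign-definite on $\R$, which is the content of the Hermite--Kakeya--Obreschkoff criterion, not literal constancy. The main line of your proof does not depend on that remark, so the proof as given stands.
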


\begin{proof}
   It suffices to prove the Lemma for all $p$ with distinct, positive real roots.
    The full theorem then follows by taking limits, since the set of polynomials with distinct positive real roots is dense in the set of polynomials with nonnegative real roots, and the property we want to conclude is closed.
    
    Under this assumption, $\frac{d}{dt}(R_np)$ strictly interlaces $R_np$ by Rolle's theorem.
    
    If $p$ is a polynomial of degree exactly $n$ with roots $0 < r_1 < r_2 < \dots < r_n$, then $R_np$ has roots 
    \[
        0 < r_n^{-1} < r_{n-1}^{-1} < \dots < r_1^{-1}.
    \]
    Similarly, we know that if $s_1 < s_2 < \dots < s_{n-1}$ are the roots of $D(p)$, then the roots of $\frac{d}{dt}(R_np) = R_{n-1}D(p)$ are 
    \[
        s_{n-1}^{-1} < 
        s_{n-2}^{-1} < \dots <
        s_{1}^{-1}
    \]

    Interlacing then implies that
    \[
        0 < r_n^{-1} < s_{n-1}^{-1} < r_{n-1}^{-1} < \dots <s_1^{-1}< r_1^{-1}.
    \]
    Inverting, we see that
    \[
        0 < r_1 < s_1 < r_2 < \dots <s_{n-1}< r_n.
    \]
    This implies that $p$ and $D(p)$ interlace.

    Next, we recall Obreschkoff's theorem \cite{obreschkoff1963verteilung} which states that a polynomial $q$ interlaces $p$ if and only if for all $\alpha, \beta \in \R$, we have that $\alpha q + \beta p$ is real rooted. So, to show $D(p)$ and $\delta_n(p)$ interlace, we only need to show that for all $\alpha, \beta$,  $\alpha(p - D(p)) + \beta D(p) =  \alpha p + (\beta - \alpha)D(p))$
    has real roots. This follows because $p$ and $D(p)$ interlace.
\end{proof}

\begin{lemma}
    \label{lem:mult1}
    Let $p \in \hyp_n$ have nonnegative real roots. Suppose that $p$ vanishes at  $r$ with multiplicity exactly $k$. If $r\neq 0$, then $\delta_n(p)$ has a root at $r$ with multiplicity exactly $k-1$. If $r = 0$, then $\delta_n(p)$ has a root at 0 with multiplicity at least $k$.
\end{lemma}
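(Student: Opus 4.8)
The plan is to first reduce $\delta_n$ to an elementary differential operator, and then read off multiplicities from a single factorization. Combining \Cref{lem:factoring}, which gives $\delta_n(p) = p - D(p)$, with the identity $D(p) = n\,p - t\,p'$ (immediate from $D(t^{n-k}) = k\,t^{n-k}$ and linearity, or from Euler's relation applied to the homogenization $\bar p$), we obtain the closed form
\[
    \delta_n(p)(t) = t\,p'(t) - (n-1)\,p(t).
\]
This identity holds for every $p \in \R[t]_n$; the real-rootedness and nonnegativity hypotheses are not needed for it, and they only serve to ensure $r$ is an honest real root being discussed.

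With this formula in hand, write $p(t) = (t-r)^k h(t)$ with $h(r) \neq 0$, so that $p'(t) = (t-r)^{k-1}\big(k\,h(t) + (t-r)h'(t)\big)$. Substituting into the displayed formula and pulling out the common factor,
\[
    \delta_n(p)(t) = (t-r)^{k-1}\,g(t), \qquad g(t) := t\big(k\,h(t) + (t-r)h'(t)\big) - (n-1)(t-r)h(t).
\]
The entire statement now reduces to understanding the behaviour of $g$ at $t = r$.

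We compute $g(r) = r\,k\,h(r)$. If $r \neq 0$, then $g(r) \neq 0$, since $k \ge 1$ and $h(r) \neq 0$, so $\delta_n(p)$ has a root at $r$ of multiplicity exactly $k-1$, as claimed. If $r = 0$, then $g(0) = 0$, so $t \mid g(t)$ and hence $t^k \mid \delta_n(p)$, giving multiplicity at least $k$ at the origin. (In fact $g(t) = t\big((k-n+1)h(t) + t h'(t)\big)$, so the multiplicity is exactly $k$ unless $k = n-1$, but this refinement is not needed.)

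I do not expect any genuine obstacle here: once the closed form $\delta_n(p) = t\,p' - (n-1)p$ is isolated, the remainder is a one-line factorization, and the only care needed is the bookkeeping in the $r = 0$ case, where the extra factor of $t$ contributed by the $t\,p'$ term is precisely what keeps the multiplicity from dropping. An alternative route would invoke the interlacing of $\delta_n(p)$ with $D(p)$ from \Cref{lem:interlace_delta} to bound the multiplicity, but that argument is longer and yields only an inequality rather than the sharp ``exactly $k-1$'' conclusion, so the direct computation is preferable.
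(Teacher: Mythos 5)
Your proof is correct, and it rests on the same foundation as the paper's argument, namely the identity $\delta_n(p) = p - D(p)$ from \Cref{lem:factoring}; the difference is in how that identity is exploited. The paper keeps $D$ in its reverse--differentiate--reverse form and argues by comparing orders of vanishing: for $r \neq 0$ it observes that $D(p)$ vanishes at $r$ to order exactly $k-1$ while $p$ vanishes to order $k$, so the difference vanishes to order at least $k-1$ but not $k$; for $r = 0$ it runs a separate coefficient-counting argument to get ``at least $k$.'' You instead collapse everything to the closed form $\delta_n(p) = t\,p' - (n-1)p$ (your identity $D(p) = np - tp'$ is right, as one checks on monomials or via the Euler relation the paper itself mentions) and then handle both cases uniformly from the single factorization $p = (t-r)^k h$, with the cofactor value $g(r) = rk\,h(r)$ cleanly separating $r \neq 0$ from $r = 0$. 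Your route buys two small things: the hypotheses of real-rootedness and nonnegativity of the roots are visibly irrelevant to the conclusion, and you get the sharper information that the multiplicity at $0$ is exactly $k$ unless $k = n-1$, where the paper only proves the lower bound. The paper's phrasing, in turn, stays closer to the operator $D$ that it has already analyzed (e.g.\ in \Cref{lem:interlace_delta}), but for this particular lemma your direct computation is the shorter and more self-contained path.
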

\begin{proof}
    In this proof, when we say a polynomial vanishes at a point with multiplicity $k$, we mean that it vanishes with multiplicity exactly equal to $k$.

    If $r > 0$ and $p$ vanishes at $r$ with multiplicity $k$, then $D(p)$ vanishes at $r$ with multiplicity $k-1$.
    Next, since $D(p)$ and $p$ both vanish at $r$ to multiplicity at least $k-1$ we see that $\delta_n(p) = p - D(p)$
    also vanishes to order at least $k-1$ at $r$.
    On the other hand, because $p$ vanishes to order $k$ at $r$, but $D(p)$ does not, $\delta_n(p)$ cannot vanish to order $k$ at $r$, so $\delta_n(p)$ vanishes at $r$ with multiplicity exactly $k-1$.

    If $r = 0$, then the first $k$ coefficients of $p$ are 0, and we see that the last $k$ coefficients of $R_np$ are 0. This implies that the first $k$ coefficients of $\frac{d}{dt} R_np$ are 0, and that the first $k$ coefficients of $D(p)$ are 0. Therefore, we conclude that both $p$ and $D(p)$ vanish with multiplicity $k$ at 0. We conclude that $\delta_n(p)$ also vanishes at 0 with multiplicity at least $k$. 
\end{proof}

Using the above Lemma, we exhibit a nonextendable zero-sum hyperbolicity preserver.
\begin{lemma}
\label{lem:nonextendable}
    Let $n \ge 5$.
    Let $T_g$ be the unique diagonal map sending $(t-(n-1))(t-1)^{n-1}$ to $(t-1)^2(t-2)^2(t+6)$, then $T$ is a zero-sum hyperbolicity preserver, but $T$ is not extendable.
\end{lemma}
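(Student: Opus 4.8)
The plan is to prove the two assertions separately and by rather different means: that $T_g$ is a $0$-sum hyperbolicity preserver via the quintic hook-shaped polynomial of the previous subsection and \Cref{thm:hookop}, and that $T_g$ is not extendable via the criterion of \Cref{lem:roots_extendable} together with the interlacing properties of $\delta_5$ recorded in \Cref{lem:interlace_delta}. Throughout, write $g_0(t) = (t+n-1)(t-1)^{n-1} \in \R[t]_{n,0}$ and $g(t) = (t-1)^2(t-2)^2(t+6)$, which expands as $t^5-23t^3+66t^2-68t+24$ and lies in $\R[t]_{5,0}$; so $d = 5$ and $T_g$ is, by definition, the unique diagonal map with $T_g(g_0) = g$.

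For the first assertion I would take $p = 6\tilde{e}_5 - \tfrac{22}{3}\tilde{e}_1\tilde{e}_4 + \tfrac{7}{3}\tilde{e}_1^2\tilde{e}_3 \in \Gamma_{n,5}$, the symmetric hyperbolic quintic of the previous subsection; then $-p$ is symmetric hyperbolic as well. Since the roots of $g_0$ are $\vec{r} = (-(n-1),1,\dots,1)$, one checks $\tilde{e}_i(\vec{r}) = 1-i$, and using \eqref{eq:norm_elem_expansion} a direct expansion gives $-p(\vec{r}-t\vec{1}) = g(t)$. Thus $T_g$ is precisely the associated operator of the symmetric hyperbolic polynomial $-p$, so \Cref{thm:hookop} shows that $T_g$ is a $0$-sum hyperbolicity preserver.

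For the second assertion I would argue by contradiction. By \Cref{lem:roots_extendable}, if $T_g$ were extendable there would be $f\in\R[t]_5$ with $\delta_5(f) = g$ and with all roots real and of one sign. Since $\delta_5$ is diagonal with $\delta_5(t^{5-k}) = (1-k)t^{5-k}$, its kernel is $\SPAN\{t^4\}$ and its image is all of $\R[t]_{5,0}$; as $g$ has degree $5$, $f$ is necessarily monic of degree $5$, and the set of preimages of $g$ is the line $\{f_a : a\in\R\}$ with $f_a(t) = t^5 + a t^4 + 23 t^3 - 33 t^2 + \tfrac{68}{3}t - 6$. Because $f_a(0) = -6$, the product of the roots of $f_a$ is $6>0$, so real roots of one sign must all be positive; then $a < 0$, so $D(f_a)$ has degree exactly $4$. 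Assume such an $f_a$ exists. By \Cref{lem:interlace_delta}, $D(f_a)$ and $\delta_5(f_a) = g$ interlace; the roots of $g$ in increasing order are $-6,1,1,2,2$, so the two repeated roots force the degree-$4$ polynomial $D(f_a)$ to vanish at $t = 1$ and at $t = 2$. But $D(f_a)(t) = a t^4 + 46 t^3 - 99 t^2 + \tfrac{272}{3}t - 30$, and a short computation gives $D(f_a)(1) = a + \tfrac{23}{3}$ and $D(f_a)(2) = 16a + \tfrac{370}{3}$, which vanish at $a = -\tfrac{23}{3}$ and $a = -\tfrac{185}{24}$ respectively. These are unequal, a contradiction; hence $T_g$ is not extendable.

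I expect the second assertion to be the main difficulty, and the key point is that a naive multiplicity argument via \Cref{lem:mult1} is not quite enough: that lemma would tell us $f$ must vanish to order $3$ at each of $1$ and $2$ — impossible in degree $5$ — but only after we already know $f$ vanishes there, whereas a priori $g$ could pick up its double roots at points where $f_a$ does not vanish. The interlacing argument circumvents this, since a repeated root of the degree-$5$ polynomial $\delta_5(f_a)$ is forced to be a root of the interlacing degree-$4$ polynomial $D(f_a)$ whether or not it is a root of $f_a$, and requiring $D(f_a)$ to pass through both forced roots over-determines the single free parameter $a$. The only other point requiring attention is the sign bookkeeping in the first assertion: the quintic $p$ itself sends $g_0$ to $-g$, so one must work with $-p$.
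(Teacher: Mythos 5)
Your proof is correct. For the first half you take essentially the paper's route: identify $T_g$ with the associated operator of the quintic hook-shaped hyperbolic polynomial of the previous subsection and invoke \Cref{thm:hookop}. Your sign bookkeeping is right and a bit more careful than the paper's: indeed $\tilde{e}_i(\vec{r})=1-i$ for the roots of $g_0(t)=(t+n-1)(t-1)^{n-1}$ (the statement's $(t-(n-1))(t-1)^{n-1}$ is a typo, since that polynomial is not zero-sum), and the associated operator of $p$ sends $g_0$ to $-g$, so one passes to $-p$ — harmless either way, since negating $p$ negates the operator and affects neither property. Citing the quintic lemma directly (rather than \Cref{thm:quintic_example}) is also the cleaner reference, since that theorem is itself deduced from the lemma together with the present statement. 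For non-extendability the paper takes precisely the short route you call naive: after \Cref{lem:roots_extendable} it applies \Cref{lem:mult1} to conclude $f$ would need triple roots at both $1$ and $2$, contradicting $\deg f=5$. Your reservation about that step is fair: \Cref{lem:mult1} transfers multiplicities from $f$ to $\delta_5(f)$, so one must first know that $f$ vanishes at $1$ and $2$, which is not automatic (e.g.\ $\delta_2\bigl((t-1)(t-2)\bigr)=t^2-2$ vanishes where $(t-1)(t-2)$ does not); the gap is closed exactly by the interlacing fact you exploit — a double root of $\delta_5(f)$ at $r>0$ forces $D(f)(r)=0$ by \Cref{lem:interlace_delta}, hence $f(r)=\delta_5(f)(r)+D(f)(r)=0$, and then \Cref{lem:mult1} yields the multiplicity-$3$ contradiction. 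Your alternative — parametrize the preimage line $f_a$ of $g$ under $\delta_5$, note the roots of $f_a$ would all have to be positive so $a<0$, and show the forced conditions $D(f_a)(1)=0$ and $D(f_a)(2)=0$ pin down the incompatible values $a=-\tfrac{23}{3}$ and $a=-\tfrac{185}{24}$ — is correct (the expansions of $g$, $f_a$, $D(f_a)$ and both evaluations check out), bypasses \Cref{lem:mult1} entirely, and is self-contained; what it gives up is the structural ``multiplicity drops by one'' picture that the paper reuses elsewhere (for instance in the proof of \Cref{lem:rhosurj}), in exchange for an explicit one-parameter computation special to this example.
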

\begin{proof}
    We start by showing that $T$ is not extendable.

    Suppose that $T$ is extendable. Then by \Cref{lem:roots_extendable}, there exists $f \in \R[t]_5$ with nonnegative real roots such that 
    \[
        \delta_5(f) = (t-1)^2(t-2)^2(t+6).
    \]
    By \Cref{lem:mult1}, we see that $f$ has a root of multiplicity 3 at 1 and also a root of multiplicity 3 at 2, but this is a contradiction, since $f$ has degree 5.

    Now, we wish to show that $T$ is a hyperbolicity preserver. 
    For this, we note that the symmetric polynomial $p = 6 \tilde{e}_5 - \frac{22}{3} \tilde{e}_1 \tilde{e}_4 + \frac{7}{3} \tilde{e}_1^2 \tilde{e}_3$ has $T$ as an associated operator, and since $p$ is hyperbolic by \Cref{thm:quintic_example}, $T$ is a hyperbolicity preserver, as desired.
\end{proof}
\subsection{A Topological Proof of Extendability}
We now give a proof that all 0-sum hyperbolicity preservers from $\R[t]_{n,0}$ to $\R[t]_{4,0}$ are extendable. While it may be possible to obtain explicit formulas for such an extension, we give a nonconstructive proof based on the observations we made in the last section.

Define the family of polytopes $A_d$ as follows: 
\[
    A_d = \{(r_1, \dots, r_d) : r_1 \ge r_2 \ge \dots \ge r_d \ge 0, \,\, \sum_{i=1}^d r_i = 1\}.
\]
This polytope can be thought of as a `projectivization' of the set of univariate real-rooted polynomials with all roots of the same sign.
Explicitly, let $\mathcal{H}_{+, d}$ be the set of all real rooted univariate polynomials $p$ of degree $d$ with roots $r_1 \ge r_2 \ge \dots \ge r_d \ge 0$ such that $p$ is not a scalar multiple of $t^d$. There is an action of the multiplicative group $\R^{\times} \times \R^{\times}_+$ on $\mathcal{H}_{+, d}$ where $(\alpha, \beta) \cdot p = \alpha p(\beta t)$. We then have that $A_d$ is homeomorphic to the quotient of $\mathcal{H}_{+, d}$ by this action, and indeed the map $\rho$ defined below  induces a homeomorphism from the quotient space to $A_d$.

We can also think of $A_{d-1}$ as being a projectivization of the set of zero-sum univariate polynomial with all but one root having the same sign, in an analogous way. If we let $\mathcal{H}_{+,0, d}$ denote the set of all zero-sum real rooted univariate polynomials with $d-1$ roots of the same sign, then we have an analogous action of  $\R^{\times} \times \R_+$ on $\mathcal{H}_{+,0,d}$, and $A_{d-1}$ is homeomorphic to the quotient space. As a consequence of \Cref{lem:interlace_delta} the map $\delta_d$ sends real rooted univariate polynomials of degree $d$ with roots of the same sign to zero-sum univariate polynomials with $d-1$ roots of the same sign. 

We would like for $\delta_d$ to also induce a map between the associated projective spaces of roots. However, $\delta_d(t^d - t^{d-1}) = t^d$, which does not correspond to a point in the associated projective space.

To rectify this, we define a map  $\phi : A_d \rightarrow A_{d-1}$
as follows: set $\phi(1,\underbrace{0,\dots,0}_{d-1 \text{ times}}) = (1,\underbrace{0,\dots,0}_{d-2 \text{ times}})$, and for all other vectors $r=(r_1,\dots,r_d)$, let
\[
    \phi(r_1, \dots, r_d) = \rho\left(\delta_d\left(\prod_{i=1}^d(t-r_i)\right)\right).
\]

where
\[
    \rho(p) = \frac{1}{\sum_{i=1}^{d-1}r_i}(r_1, \dots, r_{d-1}),
\]
 for a monic polynomial $p \in \hyp_{d,0}$ with roots $r_1, \dots, r_d$.

\begin{lemma}
    The map $\phi$ is well defined and continuous.
\end{lemma}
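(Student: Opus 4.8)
The plan is to realize $\phi$, on the punctured polytope $A_d \setminus \{(1,0,\dots,0)\}$, as the composite $r \mapsto p_r := \prod_{i=1}^d (t-r_i) \mapsto q_r := \delta_d(p_r) \mapsto \rho(q_r)$, and to treat the exceptional point $(1,0,\dots,0)$ by a separate limiting argument. First I would establish well-definedness. For $r \in A_d$ the polynomial $p_r$ is monic of degree $d$ with all roots in $[0,1]$, and by \Cref{lem:factoring} $q_r = p_r - D(p_r)$, which is again monic of degree $d$ since $D$ kills the top coefficient. By \Cref{lem:interlace_delta}, $D(p_r)$ interlaces $p_r$ (hence has nonnegative roots) and also interlaces $q_r$; since $\deg D(p_r) = d-1$, interlacing forces every root of $q_r$ except the smallest, $s_d$, to be nonnegative. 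As $q_r \in \R[t]_{d,0}$ its roots sum to $0$, so $s_d \le 0$, and $s_d = 0$ would force $q_r = t^d$; but the diagonal formula $\delta_d(t^{d-k}) = -(k-1)t^{d-k}$ shows $q_r = t^d$ only when $e_k(r) = 0$ for all $k \ge 2$, i.e.\ (using $e_1(r)=1$) only when $r = (1,0,\dots,0)$. Hence for $r \neq (1,0,\dots,0)$ we have $s_d < 0$, the normalizing scalar $-s_d = \sum_{i=1}^{d-1} s_i$ is positive, and $\rho(q_r)$ — taking the roots of $q_r$ in decreasing order — is a genuine point of $A_{d-1}$ (nonincreasing, nonnegative, summing to $1$). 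Together with the stipulated value at $(1,0,\dots,0)$, this shows $\phi$ is well-defined.

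Continuity on $A_d \setminus \{(1,0,\dots,0)\}$ is then straightforward: $r \mapsto$ coefficients of $p_r$ is polynomial, $\delta_d$ is linear, the map from the coefficients of a monic degree-$d$ polynomial to its decreasingly-sorted root vector is continuous, and $(s_1, \dots, s_d) \mapsto \tfrac{1}{-s_d}(s_1,\dots,s_{d-1})$ is continuous wherever $s_d \neq 0$, which is exactly the set in question.

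The heart of the matter is continuity at $r^\ast = (1,0,\dots,0)$, and here I would use a rescaling argument. Put $\sigma = 1 - r_1 = r_2 + \cdots + r_d$, so $\sigma \to 0$ as $r \to r^\ast$. Maclaurin's inequality applied to $(r_2,\dots,r_d)$ gives $e_k(r) \le 2\sigma^{k-1}$ for all $k \ge 2$, while $e_2(r) = r_1\sigma + e_2(r_2,\dots,r_d)$ gives $e_2(r)/\sigma \to 1$, so $e_2(r)$ is comparable to $\sigma$. Writing $q_r(t) = t^d + \sum_{k=2}^d c_k t^{d-k}$ with $c_2 = -e_2(r)$ and $|c_k| = (k-1)\,e_k(r)$, I would rescale by $\lambda := \sqrt{e_2(r)}$: the polynomial $q_r(\lambda\tau)/\lambda^{d}$ has $\tau^{d-2}$-coefficient identically $-1$, and its remaining lower coefficients are (for $k \ge 3$) bounded by a constant times $\sigma^{(k-2)/2}$, hence $\to 0$. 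Thus $q_r(\lambda\tau)/\lambda^d \to \tau^d - \tau^{d-2} = \tau^{d-2}(\tau-1)(\tau+1)$ coefficientwise as $r \to r^\ast$. By continuity of roots, the decreasingly-sorted root vector of $q_r(\lambda\tau)/\lambda^d$, which is $(s_1,\dots,s_d)/\lambda$, converges to $(1,0,\dots,0,-1)$; in particular $-s_d/\lambda \to 1$ and $s_i/\lambda \to 0$ for $2 \le i \le d-1$. Dividing, $\phi(r) = \tfrac{1}{-s_d}(s_1,\dots,s_{d-1}) = \tfrac{(s_1,\dots,s_{d-1})/\lambda}{-s_d/\lambda} \to (1,0,\dots,0) = \phi(r^\ast)$, which finishes the proof. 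I expect this last step — pinning down the correct blow-up rate $\sqrt{e_2(r)}$ and controlling the higher $e_k(r)$ uniformly — to be the only real obstacle; everything else is bookkeeping with interlacing and the standard continuity of roots.
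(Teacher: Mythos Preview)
Your proof is correct. The well-definedness argument and the continuity on $A_d \setminus \{(1,0,\dots,0)\}$ are essentially identical to the paper's (both rest on \Cref{lem:interlace_delta}); in fact your identification of exactly when $\delta_d(p_r)=t^d$ is a bit cleaner than the paper's phrasing.

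The genuine difference is in the continuity at $r^\ast=(1,0,\dots,0)$. The paper argues by explicit root-size estimates: using the interlacing $s_i\le r_i\le 1-r_1$ for $2\le i\le d-1$, Newton's identity to get $e_2(r)>\tfrac12(1-r_1)$, the relation $-e_2(s)=e_2(r)$ to bound $\sum s_i^2$ from below, and a short contradiction to conclude $s_1\ge C\sqrt{1-r_1}$. You instead blow up by $\lambda=\sqrt{e_2(r)}$ and show coefficientwise convergence of the rescaled polynomial to $\tau^{d-2}(\tau-1)(\tau+1)$, then read off the limiting sorted root vector $(1,0,\dots,0,-1)$ directly. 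Your route is arguably more conceptual and yields the full first-order asymptotic shape of the roots at once; the paper's route avoids Maclaurin's inequality and the rescaling device, trading them for elementary inequalities plus one use of the interlacing $s_i\le r_i$. Either way the key insight is the same scale separation $s_1\asymp\sqrt{1-r_1}$ versus $s_i=O(1-r_1)$ for $2\le i\le d-1$.
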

\begin{proof}
    We first show that $\phi$ is well defined, i.e. that $\phi(r) \in A_{d-1}$ for any $r \in A_d$.

    Fix $(r_1, \dots, r_d) \in A_d$, and let $p(t) = \prod_{i=1}^d (t-r_i)$.
    By \Cref{lem:interlace_delta}, 
    $D(p)$ interlaces $p$, so in particular, $D(p)$ has nonnegative real roots. 
    
    By \Cref{lem:interlace_delta}, $D(p)$ interlaces $\delta_d(p)$, which implies that $\delta_d(p)$ has $d-1$ nonnegative roots.
    If moreover, $p \neq t^n$, then $\delta_d(p)$ has a positive root, and so, $\rho(\delta_d(p)) \in A_{d-1}$ is well defined.

    Because the roots of a polynomial are continuous in the coefficients of that polynomial, on $A_d \setminus \{(1,0,\dots,0)\}$, $\phi$ is continuous. It remains to show that $\lim_{r \rightarrow (1,0,\dots,0)} \phi(r) = (1,0,\dots,0)$.

    Fix some $1 > \delta > 0$. We will show that there is some $\epsilon$ with the property that if $r \in A_d$ with $r \neq (1,0,\dots,0)$ and $1-r_1 < \epsilon$, then $\|\phi(r) - (1,0,\dots,0)\| < \delta$.
    Very roughly speaking, we will accomplish this by arguing that the largest entry of $\phi(r)$ is much larger than any other entry of $\phi(r)$ when $1-r_1$ is small. This implies that after normalizing $\phi(r)$ to have sum equal to 1, the largest entry will have value tending toward 1 as $r$ tends to $(1,0, \dots, 0)$

    More concretely, let $s_1 \ge s_2 \ge \dots \ge s_d$ be the roots of $\delta_d(p)$.
    We will show that $s_1 \ge C \sqrt{1-r_1}$ whereas $s_i < c (1-r_1)$ for $d > i > 1$, for some constants $C, c > 0$, whenever $1-r_1$ is small enough. It is clear that this then implies that as $1-r_1$ tends toward 0, $\phi(r)_1 = \frac{s_1}{\sum_{i=1}^{d-1} s_i}$ will tend toward 1, and $\phi(r)_i = \frac{s_1}{\sum_{i=1}^{d-1} s_i}$ will tend toward 0. This implies the result.
    
    Because $r_i \ge 0$ for all $i$ and $\sum_{i=1}^n r_i = 1$, $r_i < 1-r_1$ for each $i$.
    It can be seen from the proof of \Cref{lem:interlace_delta} that $s_i \le r_i$ for each $1 < i < d$.
    This implies that $s_i \le 1-r_1$ for all $i$.

    Now, we must show that $s_1 \ge C \sqrt{1-r_1}$ for some $C > 0$ whenever $1-r_1$ is small enough. To do this, we note from Newton's identity that
    \begin{align*}
        2e_2(r) &= \left( \sum_{i=1}^d r_i\right)^2 - \sum_{i=1}^d r_i^2\\
        &= 1 - \sum_{i=1}^d r_i^2\\
        &>  1 - r_1^2 - \sum_{i=2}^d (1-r_1)^2\\
        &= (1-r_1)(1+r_1-(d-1)(1-r_1))\\
        &> 1-r_1.
    \end{align*}
    This last inequality holds whenever $1-r_1$ is small enough.

    $p(t) = t^d - t^{d-1} + e_2(r)t^{d-2}+\dots$, so by definition $\delta_d(p) = t^d - e_2(r)t^{d-2}+\dots$.
    Because $s_1, \dots, s_d$ are the roots of $\delta_d(p)$, we also have that
    \[
        \delta_d(p) = t^d + e_2(s)t^{d-2}+\dots.
    \]
    We conclude $-e_2(s) = e_2(r)$. This implies that
    \[
        -2e_2(s) = \sum_{i=1}^d s_i^2 - \left( \sum_{i=1}^d s_i\right)^2 = \sum_{i=1}^d s_i^2 > 1-r_1.
    \]
    Using the fact that $0 \le s_i \le 1-r_1$ for each $d > i > 2$, we have that
    \[
        s_1^2 + s_d^2 + (d-2)(1-r_1)^2 > (1-r_1),
    \]
    or equivalently,
    \[
        s_1^2 + s_d^2 \ge (1-r_1)(1 - (d-2)(1-r_1)) > \frac{1}{2}(1-r_1).
    \]
    This inequality holds whenever $1-r_1$ is small enough.
    
    We also have that $\sum_{i=1}^d s_i = 0$ so that $s_1 + s_d = -\sum_{i=2}^{d-1} s_i \ge - (d-2)(1-r_1)$.

    Suppose for contradiction that $s_1 < \frac{1}{4}\sqrt{1-r_1}$. Because $s_1^2 + s_d^2 > \frac{1}{2}(1-r_1)$, and $s_d < 0$, we conclude that $s_d < -\frac{7}{16}\sqrt{\frac{(1-r_1)}{2}}$, but this would imply that 
    \[
        - (d-2)(1-r_1) \le s_1 + s_d < -\frac{3}{16}\sqrt{\frac{(1-r_1)}{2}},
    \]
    which is false for $1-r_1$ small enough.
    We conclude that $s_1 \ge \frac{1}{4}\sqrt{1-r_1}$, which we have seen implies the result.
\end{proof}
\begin{lemma}\label{lem:rhosurj}
    For $n = 4$, the map $\phi$ is surjective.
\end{lemma}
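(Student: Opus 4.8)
The plan is to reduce the surjectivity of $\phi$ to a one‑variable root‑counting problem. Since $\phi$ is continuous and $A_4$ is compact, $\phi(A_4)$ is closed in $A_3$, and the relative interior of $A_3$ (the points with $\sigma_1>\sigma_2>\sigma_3>0$) is dense; so it suffices to hit every such interior point $\sigma=(\sigma_1,\sigma_2,\sigma_3)$ and then pass to the closure. Fix such a $\sigma$ and set $g(t)=(t-\sigma_1)(t-\sigma_2)(t-\sigma_3)(t+1)$. Because $\sigma_1+\sigma_2+\sigma_3=1$ the $t^3$‑coefficient of $g$ vanishes, so $g(t)=t^4-at^2+2bt-3c$ with $a=1-e_2(\sigma)\in[2/3,1]$, $b=\tfrac12(e_2(\sigma)-e_3(\sigma))>0$ and $c=\tfrac13 e_3(\sigma)>0$.

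First I would record the identity $\delta_4(f)=tf'(t)-3f(t)$, which is immediate from $\delta_4(t^{4-k})=-(k-1)t^{4-k}$. If $r\in A_4$ and $p_r=\prod_i(t-r_i)$, then $e_1(r)=1$, so $p_r=t^4-t^3+e_2(r)t^2-e_3(r)t+e_4(r)$ and $\delta_4(p_r)=t^4-e_2(r)t^2+2e_3(r)t-3e_4(r)$. Asking $\phi(r)=\sigma$ amounts to asking that $\delta_4(p_r)$ have roots $\mu\sigma_1,\mu\sigma_2,\mu\sigma_3,-\mu$ for some $\mu>0$; expanding $(t-\mu\sigma_1)(t-\mu\sigma_2)(t-\mu\sigma_3)(t+\mu)$ and comparing coefficients, this is equivalent to $e_2(r)=\mu^2a$, $e_3(r)=\mu^3b$, $e_4(r)=\mu^4c$, i.e.\ to $r$ being the sorted root tuple of
\[
    p_\mu(t):=t^4-t^3+\mu^2 a\,t^2-\mu^3 b\,t+\mu^4 c .
\]
Since the coefficient sign pattern of $p_\mu$ is $+,-,+,-,+$ and $c>0$, real‑rootedness of $p_\mu$ forces all four roots to be positive, and then $e_1(r)=1$ puts $r$ in $A_4$ automatically. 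So the task becomes: find $\mu>0$ with $p_\mu$ real‑rooted.

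For this I would substitute $t=\mu s$, getting $p_\mu(\mu s)=\mu^3\bigl(\mu q(s)-s^3\bigr)$ with $q(s):=s^4+as^2-bs+c$, so the roots of $p_\mu$ are exactly $\{\mu s: s>0,\ \psi(s)=\mu\}$, where $\psi(s):=s^3/q(s)$. Thus I need a value $\mu>0$ taken by $\psi$ at four points of $(0,\infty)$. A short computation gives $\psi'(s)=-s^2 g(s)/q(s)^2$, so on any interval where $q>0$ the positive critical points of $\psi$ are precisely $\sigma_3\le\sigma_2\le\sigma_1$, with $\psi$ increasing on $(0,\sigma_3)$, decreasing on $(\sigma_3,\sigma_2)$, increasing on $(\sigma_2,\sigma_1)$, decreasing on $(\sigma_1,\infty)$, and $\psi(0^+)=\psi(+\infty)=0$. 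The key point is that $q(\sigma_1)>0$; granting it, a short case analysis — using that $q$ is decreasing then increasing on $[0,\infty)$ with $q(0)=c>0$ (hence has at most two zeros there) and that $\psi\to+\infty$ as $s$ approaches any such zero from the side where $q>0$ — shows that $q>0$ on $[\sigma_2,\infty)$, and that if $q$ does vanish on $(0,\infty)$ its two zeros straddle $\sigma_3$. In either configuration $\psi$ sweeps monotonically through one fixed nonempty interval $(\alpha,\beta)\subset(0,\infty)$ on each of four pairwise disjoint subintervals of $(0,\infty)$ (namely $(0,\sigma_3),(\sigma_3,\sigma_2),(\sigma_2,\sigma_1),(\sigma_1,\infty)$ when $q>0$ everywhere, and the obvious variant otherwise), so for any $\mu\in(\alpha,\beta)$ the equation $\psi(s)=\mu$ has four positive solutions; then $p_\mu$ has four positive roots whose sorted tuple $r$ lies in $A_4$ and maps to $\sigma$, and the closure argument finishes the proof.

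The main obstacle is the inequality $q(\sigma_1)>0$ (together with the attendant bookkeeping about where $q$ can vanish). To prove it, note that $g(\sigma_1)=0$ gives $\sigma_1^4=a\sigma_1^2-2b\sigma_1+3c$, so $q(\sigma_1)=2a\sigma_1^2-3b\sigma_1+4c$; substituting $a,b,c$ and using $e_2(\sigma)=\sigma_1(1-\sigma_1)+e_3(\sigma)/\sigma_1$ simplifies this to
\[
    q(\sigma_1)=2\sigma_1^4-\tfrac12\sigma_1^3+\tfrac12\sigma_1^2-e_3(\sigma)\Bigl(\tfrac{\sigma_1}{2}+\tfrac16\Bigr).
\]
Bounding $e_3(\sigma)=\sigma_1\sigma_2\sigma_3\le\sigma_1\bigl(\tfrac{1-\sigma_1}{2}\bigr)^2$ and clearing denominators reduces $q(\sigma_1)>0$ to $45m^3-7m^2+11m-1>0$ for $m=\sigma_1\in[1/3,1]$, which holds since this cubic is increasing (its derivative $135m^2-14m+11$ has negative discriminant) and equals $32/9$ at $m=1/3$. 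The positivity $q(\sigma_2)>0$ then follows formally from $q(\sigma_1)>0$ via the same blow‑up argument, so no extra estimate is needed.
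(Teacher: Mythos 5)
Your argument is correct, but it is a genuinely different proof from the one in the paper. The paper argues topologically and non\-constructively: it parameterizes the boundary of the triangle $A_3$ by a closed curve $\gamma$, exhibits an explicit piecewise\-linear lift $\hat\gamma$ in $A_4$ (using \Cref{lem:mult1} to see that each lifted edge maps into the corresponding edge of $A_3$), and concludes surjectivity because $\gamma$ is non\-contractible in $A_3\setminus\{x\}$ for any interior $x$ while every loop in $A_4$ is contractible. You instead construct a preimage of each interior target $\sigma$ directly: after the (correct) reductions $\delta_4(f)=tf'-3f$, $\delta_4(p_r)=t^4-e_2(r)t^2+2e_3(r)t-3e_4(r)$, and the observation that $\phi(r)=\sigma$ is equivalent to $r$ being the root tuple of $p_\mu(t)=t^4-t^3+a\mu^2t^2-b\mu^3t+c\mu^4$ for some $\mu>0$ (Descartes plus $c>0$ forcing positivity of the roots), everything hinges on finding $\mu$ making $p_\mu$ real rooted, which you do through the level sets of $\psi(s)=s^3/q(s)$; I checked that indeed $\psi'=-s^2g(s)/q(s)^2$, that $q(\sigma_1)=2a\sigma_1^2-3b\sigma_1+4c$, and that your substitution and the AM--GM bound $e_3\le\sigma_1\bigl(\tfrac{1-\sigma_1}{2}\bigr)^2$ reduce $q(\sigma_1)>0$ to $45m^3-7m^2+11m-1>0$ on $[1/3,1]$, which holds as you say. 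The compressed ``bookkeeping'' also completes as sketched: granting $q(\sigma_1)>0$, if $q$ vanished at some $z\in[\sigma_2,\sigma_1)$ then on $(z,\sigma_1)$ the function $\psi$ would be increasing (there $g<0$) yet blow up to $+\infty$ at $z^+$, a contradiction, and similar pole\-versus\-monotonicity contradictions rule out all placements of the (at most two) zeros of $q$ except weakly straddling $\sigma_3$ with both zeros below $\sigma_2$; in either surviving configuration the interval $\bigl(\psi(\sigma_2),\,\min(\psi(\sigma_3),\psi(\sigma_1))\bigr)$ (or $\bigl(\psi(\sigma_2),\psi(\sigma_1)\bigr)$ in the straddling case) is nonempty and any $\mu$ in it yields four positive solutions, so your closure argument (compactness of $\phi(A_4)$ plus density of interior targets) finishes the proof. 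The explicit inequality is genuinely needed—its role is to exclude the one configuration (zeros of $q$ straddling $\sigma_1$) that the soft monotonicity arguments cannot—so do write out the case analysis rather than leaving it ``formal.'' The trade\-off: your proof is elementary, quantitative, and actually produces preimages (even an interval of admissible scalings $\mu$), at the cost of a delicate polynomial estimate; the paper's proof is shorter and estimate\-free but purely existential, and its homotopy argument does not obviously extend beyond small $d$ any more than yours does.
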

\begin{proof}
    When $n = 4$, we note that $A_3$ is a 2 dimensional simplex, whose boundary is the union of three line segments:
    \[
        L_1 = \{(\frac{1+t}{2},\frac{1-t}{2},0) : t \in [0,1]\}.
    \]
    \[
        L_2 = \{(1-2t,t,t) : t \in [0,\frac{1}{3}]\}.
    \]
    \[
        L_3 = \{(t,t,1-2t) : t \in [\frac{1}{3},\frac{1}{2}]\}.
    \]
    Each of these segments can be parameterized by a linear function from $[0,1]$ to $A_3$.
    Let $\gamma : [0,1] \rightarrow A_3$ be the closed piecewise linear curve which concatenates these parameterizations in order.

    Note that $\gamma$ has the property that for any $x \in A_3^o$ (the interior of $A_3$), $\gamma$ is not contractible in $A_3 \setminus \{x\}$.
    On the other hand, any curve in $A_4$ is contractible as $A_4$ is simply connected. 

    This implies that if we can find a \emph{lift} of $\gamma$ to $A_4$, i.e. a map $\hat{\gamma} : [0,1] \rightarrow A_4$ with the property that $\phi \circ \hat{\gamma}$ is homotopic to $\gamma$, then $\phi$ is surjective. Otherwise, we would have that there is some $x \in A_3$ so that $\phi$ is a well defined map from $A_4$ to $A_3 \setminus x$, so that $\hat{\gamma}$ would map to a noncontractible curve in $A_3\setminus x$, a contradiction.
    
    We now give an explicit lift $\hat{\gamma}$. Consider the line segments
    \[
        \hat{L}_1 = \{(1-2t,t,t,0) : t \in [0,\frac{1}{3}]\}.
    \]
    \[
        \hat{L}_2 = \{(\frac{1-t}{3},\frac{1-t}{3},\frac{1-t}{3},t) : t \in [0,\frac{1}{4}]\}.
    \]
    \[
        \hat{L}_3 = \{(t,\frac{1-t}{3},\frac{1-t}{3},\frac{1-t}{3}) : t \in [\frac{1}{4},1]\}.
    \]
    
    The endpoints of these line segments are $(1,0,0,0)$, $(\frac{1}{3},\frac{1}{3},\frac{1}{3},0)$, $(\frac{1}{4},\frac{1}{4},\frac{1}{4},\frac{1}{4})$. These points map to $(1,0,0)$, $(\frac{1}{2},\frac{1}{2},0)$, $(\frac{1}{3},\frac{1}{3},\frac{1}{3})$ respectively.
    
    Each of these line segments can be parameterized by a linear map, and if $\hat{\gamma}$ is the concatenation of these maps, then $\hat{\gamma}$ defines a simple closed curve.
    We now need to show that $\hat{\gamma}$ lifts $\gamma$.

    It can be seen using \Cref{lem:mult1} that $\phi(\hat{L}_i) \subseteq L_i$ for $i = 1, 2, 3$. For example, $\hat{L}_1$ maps to $L_1$ because for any $r \in L_1$, $r_4 = 0$. This implies that $p = \prod_{i=1}^4 (t-r_i)$ vanishes at 0, and so  $\delta_n(p)$ vanishes at 0 by \Cref{lem:mult1}, and therefore, $\phi(r) \in L_1$.
    Moreover, $\phi$ maps the two endpoints of $\hat{L}_i$ to the two endpoints of $L_i$ in order.
    This implies that when restricted to $\hat{L}_i$, $\phi$ is a homotopy equivalence between the natural parameterizations of $\hat{L}_i$ and $L_i$. 

    In conclusion, we are able to homotopy $\phi \circ \hat{\gamma}$ to $\gamma$ on each $L_i$ separately while preserving the endpoints of the $L_i$. We conclude by contractibility of $A_4$ that $\phi$ is surjective.
\end{proof}

\begin{thm}
    Let $T : \R[t]_{n,0} \rightarrow \R[t]_{4,0}$ be a diagonal zero-sum hyperbolicity preserver. Then there exists $\hat{T}:\R[t]_{n} \rightarrow \R[t]_{4}$ such that $\hat{T}$ is a diagonal hyperbolicity preserver, and $\hat{T}(f) = T(f)$ for all $f \in \R[t]_{n,0}$.

    Moreover, this is the case if and only if 
    \[
        g = T((t+n-1)(t-1)^{n-1})
    \]
    has real roots with three of the same sign.
\end{thm}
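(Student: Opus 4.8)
The plan is to reduce the theorem to \Cref{lem:roots_extendable} and then feed in the topological input \Cref{lem:rhosurj}, together with the interlacing properties of $\delta_4$ recorded in \Cref{lem:interlace_delta,lem:mult1}. Write $g = T((t+n-1)(t-1)^{n-1})$, so that $T = T_g$. By \Cref{lem:roots_extendable}, $T_g$ is extendable precisely when there is an $f \in \R[t]_4$ with $\delta_4(f) = g$ whose roots are real and all of the same sign; so the whole statement reduces to the claim that such an $f$ exists if and only if $g$ is real rooted with at least three roots of the same sign. The first assertion of the theorem then comes for free: if $T$ is a $0$-sum hyperbolicity preserver, then $g = T(g_0)$ is real rooted (being $T$ applied to the real-rooted $g_0$) and has three roots of the same sign by \Cref{lem:necessary}, hence $T$ is extendable.

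For the ``only if'' direction I would argue as follows. Given $\delta_4(f) = g$ with $f$ real rooted with all roots of one sign, use that $\delta_4$ is diagonal, hence commutes with $t \mapsto -t$, to reduce to the case that $f$ has nonnegative roots. Then \Cref{lem:interlace_delta} gives that $D(f)$ interlaces both $f$ (so $D(f)$ has nonnegative roots) and $\delta_4(f) = g$; interlacing makes $g$ real rooted, and sandwiching the roots of $g$ against the nonnegative roots of $D(f)$ forces three of the roots of $g$ to be nonnegative. When $f$, and hence $g$, has degree less than $4$ this is handled by the same computation after perturbing $f$ to a nearby degree-$4$ polynomial with nonnegative roots, counting any roots of $g$ at infinity with their natural sign.

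The ``if'' direction is the heart of the matter. Assume $g$ is real rooted with three roots of one sign. Using again that $\delta_4$ commutes with $t \mapsto -t$, with rescalings $t \mapsto \lambda t$ ($\lambda > 0$), and with scalar multiplication, I would normalize $g$ so that it has three nonnegative roots; since a polynomial in $\R[t]_{4,0}$ can neither have degree exactly $3$ nor have three roots while having degree at most $2$, this forces $\deg g = 4$. If $g$ is a scalar multiple of $t^4$, take $f = g$, since $\delta_4(t^4) = t^4$. Otherwise the three largest roots $\rho_1 \ge \rho_2 \ge \rho_3 \ge 0$ of $g$ are not all zero (the vanishing coefficient of $t^3$ would otherwise force $g$ to be a multiple of $t^4$), so $\rho(g) \in A_3$ is defined; by \Cref{lem:rhosurj}, choose $r = (r_1, \dots, r_4) \in A_4$ with $\phi(r) = \rho(g)$. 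If $r = (1,0,0,0)$, then $\rho(g) = (1,0,0)$, i.e.\ $g$ is a scalar multiple of $t^2(t^2 - \rho_1^2)$, and a direct computation exhibits the member $\propto t^2(t - \rho_1)^2$ of the line $\delta_4^{-1}(g)$, which has nonnegative roots. Otherwise put $p_0 = \prod_{i=1}^4 (t - r_i)$; by the definition of $\phi$ we have $\rho(\delta_4(p_0)) = \rho(g)$, and since both $\delta_4(p_0)$ and $g$ lie in $\R[t]_{4,0}$ — so each one's fourth root is minus the sum of the other three — comparing the three largest roots shows that the four roots of $\delta_4(p_0)$ are a common positive multiple $\mu$ of the four roots of $g$. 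Hence $g(t) = c\,\delta_4(p_0)(\mu t)$ for some $c \ne 0$ and $\mu > 0$, and because $\delta_4$ commutes with rescaling and with scalars, $f(t) := c\, p_0(\mu t)$ satisfies $\delta_4(f) = g$ and has nonnegative roots $r_i/\mu$ — exactly the polynomial we sought.

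I expect the main obstacle to be exactly this ``if'' direction. Algebraically, the preimage $\delta_4^{-1}(g)$ is an entire line $\{f_0 + b t^3 : b \in \R\}$ in $\R[t]_4$, and there is no elementary reason why any point of this line should have all real roots of the same sign — that fact is supplied only by the topological surjectivity of $\phi$ in \Cref{lem:rhosurj}. The delicate parts are therefore the dictionary between ``a $\delta_4$-preimage of $g$'' and ``a point of $A_4$ mapping to $\rho(g)$ under $\phi$'', tracking the $\R^{\times} \times \R_+$ rescaling through that dictionary, and dispatching the degenerate configurations (degree drops of $f$ or $g$, a root at $0$, and the vertex $(1,0,0,0)$ of $A_4$).
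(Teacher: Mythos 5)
Your proposal is correct and takes essentially the same route as the paper: reduce to \Cref{lem:roots_extendable}, and then use the topological surjectivity in \Cref{lem:rhosurj} to produce the preimage $f \in \delta_4^{-1}(g)$ with roots of one sign. There are two places where you diverge from the paper's argument and, in both, your version is the more careful one. First, for the direction ``extendable $\Rightarrow$ sign condition'' the paper simply cites \Cref{lem:necessary}; you instead deduce it from \Cref{lem:roots_extendable} together with the interlacing facts in \Cref{lem:interlace_delta} (the sandwich $s_1 \ge q_1 \ge s_2 \ge q_2 \ge s_3 \ge q_3 \ge s_4$ with $q_3 \ge 0$). Both work, and yours has the small advantage of not passing through the Descartes/Polya--Schur argument. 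Second, and more substantively, you correctly track the $\R^\times \times \R_+$ scaling through the passage from $\phi(s) = \rho(g)$ back to a preimage of $g$ under $\delta_4$. The paper's displayed identity $\delta_n(\hat{g}(t)) = g((r_1+r_2+r_3)t)$ is in fact off by a multiplicative constant: knowing $\phi(s)=\rho(g)$ only pins the roots of $\delta_4(\hat{g})$ down as $\lambda r_i$ for some a priori unknown $\lambda > 0$, so $\delta_4(\hat{g})(t) = \lambda^4 g(t/\lambda)$ rather than $g((r_1+r_2+r_3)t)$. Your fix---set $f(t)=c\,p_0(\mu t)$ and use that $\delta_4$ commutes with dilations and scalars---is exactly the right repair and makes the argument airtight. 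You also handle the vertex $(1,0,0,0)$ of $A_4$ and the pure $t^4$ case explicitly, which the paper does only implicitly (and in fact the paper's application of \Cref{lem:rhosurj} would also quietly need the observation that $\phi^{-1}((1,0,0))$ contains a point other than the vertex, which you circumvent). The only place where you are a bit sketchier is the treatment of degree drops ($\deg g < 4$) in the ``only if'' direction; the limiting/perturbation argument you gesture at can be made precise (e.g.\ multiply $f$ by $(t - M)$ with $M \to \infty$ after renormalizing), but as stated it is vague. Since the paper is no more careful about this degeneracy, I would not count it against you.
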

\begin{proof}
    It follows from \Cref{lem:necessary} that if $T$ is extendable, then  $g = T((t+n-1)(t-1)^{n-1})$
    has real roots of which 3 have the same sign.

    On the other hand, suppose that $g = T((t+3)(t-1)^{3})$
    has real roots of which $3$ have the same sign.

    If $g = at^4$ for some $a \in \R$, then for any $f \in \R[t]_{n,0}$, $T(f) = \ell(f)t^4$ for some linear function $\ell : \R[t]_{n,0} \rightarrow \R$, so we may take $\hat{T}(f) = \ell(f)t^4$ for any $f \in \R[t]_n$, and we have our desired extension.
Now suppose that $g$ has a nonzero root, so that     \[
        g = \prod_{i=1}^4 (t-r_i),
    \]
    where $r_1 > 0$ and $r_2, r_3 \ge 0$.
    By Lemma~\ref{lem:rhosurj}, we see that there exists $(s_1, s_2, s_3,s_4) \in A_4$ such that 
    \[
        \phi(s_1, s_2, s_3, s_4) = \frac{1}{r_1+r_2+r_3}(r_1, r_2, r_3), 
    \]
    which implies that for $\hat{g}(t) = \prod_{i=1}^4 (t-s_i)$ we have
    \[
        \delta_n(\hat{g}(t)) = g((r_1+r_2+r_3)t).
    \]
   Therefore,
    \[
        \delta_n(\hat{g}(\frac{1}{r_1+r_2+r_3}t)) = g(t).
    \]
    It follows from \Cref{lem:roots_extendable} that $T$ is extendable.
\end{proof}
\section{Spectrahedral Representations}
In this section, we will show the following theorem.
\spectrahedral*

We summarize some existing results \cite[Lemma 5.3]{kummer2021spectral} and \cite[Lemma 2]{branden2014hyperbolicity} in the following theorem:
\begin{thm}
    There is a matrix $B_{n,k}(x)$ of linear forms with the following properties:
    \begin{itemize}
        \item $B_{n,k}(x)m(x) = e_k(x) \delta$, where $m(x)$ is a vector whose entries are forms of degree $k-1$ (and whose first entry is $e_{k-1}(x)$), and $\delta$ is the coordinate vector whose first entry is 1.
        \item $B_{n,k}(x) \succeq 0$ iff $x \in \Lambda_{\vec{1}}e_k$.
        \item $\det(B_{n,k}(x)) = e_k(x) \prod_{S \subseteq [n] : |S| \le k} (\partial^S e_{k-1}(x))^{|S|!(n-|S|-1)}$.
    \end{itemize}
\end{thm}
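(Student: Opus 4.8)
The plan is to treat this theorem as an assembly of results already in the literature: the three asserted properties are, up to notation, the content of \cite[Lemma 2]{branden2014hyperbolicity} together with its refinement in \cite[Lemma 5.3]{kummer2021spectral}, so the proof amounts to recalling the construction of $B_{n,k}(x)$ and matching each bullet to what is proved there. Accordingly, I would organize the proof around three steps: (i) write down the matrix and the vector $m(x)$ and verify the identity $B_{n,k}(x)m(x) = e_k(x)\delta$; (ii) recall the semidefiniteness characterization; (iii) recall the determinant formula.

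First I would recall Br\"and\'en's matrix. Index both rows and columns of $B_{n,k}(x)$ by the subsets $S \subseteq [n]$ with $|S| \le k-1$, placing the empty set first, so that $B_{n,k}(x)$ has size $\sum_{j=0}^{k-1}\binom{n}{j}$. Its entries are linear forms in $x$ recording the incidence structure of the Boolean lattice truncated at level $k-1$: up to normalizing constants the $(S, S\cup\{i\})$ and $(S\cup\{i\}, S)$ entries are multiples of $x_i$, the diagonal entries are multiples of $e_1(x)$, and all remaining entries vanish. Define $m(x)$ by letting $m(x)_S$ be a suitably homogenized degree-$(k-1)$ form built from the iterated partial derivative $\partial^S e_{k-1}(x)$, so that $m(x)_\emptyset = e_{k-1}(x)$ as required. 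The identity $B_{n,k}(x)m(x) = e_k(x)\delta$ is then a direct computation with elementary symmetric polynomials: it unwinds, one lattice level at a time, the Euler-type recursion $e_1 e_{k-1} = k\,e_k + \sum_i x_i\,\partial_i e_{k-1}$ together with the analogous recursions for the partials $\partial^S e_{k-1}$. This is the first bullet.

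Next I would recall the semidefiniteness characterization. That $B_{n,k}(x) \succeq 0$ for $x$ in the hyperbolicity cone $\Lambda_{\vec 1} e_k$ follows by induction on $k$ from the Schur-complement block structure of $B_{n,k}$: deleting the row and column indexed by $\emptyset$ leaves, in the appropriate coordinates, a copy of $B_{n,k-1}$, and $\Lambda_{\vec 1}e_k$ is cut out by $e_k \ge 0$ together with the conditions that the relevant directional derivatives lie in their cones, which is precisely positivity of the Schur complement. The converse uses the first bullet: from $B_{n,k}(x)\succeq 0$ one reads off $e_k(x) = \delta^\top B_{n,k}(x) m(x) \ge 0$ and, more generally, that all the derivative conditions defining $\Lambda_{\vec 1}e_k$ hold. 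This is the content of \cite[Lemma 2]{branden2014hyperbolicity}, which I would cite.

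Finally, for the determinant formula I would expand $\det B_{n,k}(x)$ by iterated Schur complements down the subset lattice: eliminating the block indexed by a subset $S$ contributes a factor of $\partial^S e_{k-1}(x)$, the exponent $|S|!(n-|S|-1)$ being the count of how many maximal chains in the truncated Boolean lattice produce that factor, with the leftover $e_k(x)$ coming from the empty set at the top. I expect this bookkeeping — tracking the multiplicities $|S|!(n-|S|-1)$ through the Schur-complement cascade and matching them against the combinatorics of chains in the Boolean lattice — to be the one genuinely delicate point, and it is exactly what is carried out in \cite[Lemma 5.3]{kummer2021spectral}; I would reference that computation rather than reproduce it. Combining the three steps gives the theorem.
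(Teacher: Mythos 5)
Your proposal takes exactly the paper's route: the paper offers no proof of this theorem at all, stating it explicitly as a summary of \cite[Lemma 2]{branden2014hyperbolicity} and \cite[Lemma 5.3]{kummer2021spectral}, which is precisely what you do. Your informal sketch of the matrix construction and the Schur-complement bookkeeping is extra color that is not load-bearing (and its fine details, e.g.\ the exact indexing and entries, should be checked against the cited sources rather than trusted as stated), but since you defer the actual verification to those lemmas, the argument matches the paper's.
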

We now prove the theorem:
\begin{proof}[Proof of \Cref{thm:spectrahedral}]
Fix $a \ge 0$, and let $D_{n,k}(x) = B_{n,k}(x) + a\ell(x)\delta \delta^{\intercal}$. We then have that
\[
    D_{n,k}(x) m(x) = (B_{n,k}(x) + \ell(x)\delta \delta^{\intercal})m(x) = (e_k(x) + \ell(x) e_{k-1}(x))\delta.
\]
We conclude that $e_k(x) + \ell(x) e_{k-1}(x)$ is a factor of $\det(D_{n,k}(x))$ (because whenever $e_k(x) + \ell(x) e_{k-1}(x)$ vanishes, $m(x)$ is in the kernel of $D_{n,k}(x)$, and $m(x)$ is generically nonzero). Moreover, because $\delta \delta^{\intercal}$ is rank 1, we have that $\det(D_{n,k}(x))$ is in fact degree at most 1 in the coefficients of $\ell$, and so we have that for some polynomial $q(x)$, $\det(D_{n,k}(x)) = (e_k(x) + \ell(x) e_{k-1}(x)) q(x),$
where $q(x)$ does not depend on $\ell$. Examining the case when $\ell = 0$, we conclude that $q(x) = \prod_{S \subseteq [n] : |S| \le k} (\partial^S e_{k-1}(x))^{|S|!(n-|S|-1)}$. 
Note that the fact that $q(x)$ is a product of directional derivatives of $e_{k-1}$ implies that it does not vanish on the interior of the hyperbolicity cone of $e_{k-1}$.
Also note that $e_{k-1}$ interlaces $e_k + \ell e_{k-1}$, and so the hyperbolicity cone of $e_k + \ell e_{k-1}$ is contained in that of $e_{k-1}$.
Therefore, $q(x)$ does not vanish on the interior of the hyperbolicity cone of $e_k + \ell e_{k-1}$. Together with the fact that $D_{n,k}(\vec{1}) = B_{n,k}(\vec{1}) + \ell(\vec{1}) \delta\delta^{\intercal}$ allows us to conclude that $D_{n,k}(x) \succeq 0$ iff $x$ is in the hyperbolicity cone of $e_k + ae_1e_{k-1}$, as desired.
\end{proof}
\subsection{Cubics}
\begin{thm}
Let $p$ be a symmetric hyperbolic cubic. $\Lambda_{\vec{1}}p$ is spectrahedral.
\end{thm}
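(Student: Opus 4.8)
The plan is to reduce an arbitrary symmetric hyperbolic cubic to one whose hyperbolicity cone is already known to be spectrahedral, using invertible linear changes of variables that commute with the $S_n$-action. First I would use that every cubic symmetric polynomial is hook-shaped to write $p = c_1 e_1^3 + c_2 e_1 e_2 + c_3 e_3$; rescaling $p$ by a nonzero constant does not affect $\Lambda_{\vec 1}(p)$, and $p(\vec 1) = c_1 + c_2 + c_3 \neq 0$ since $p$ is hyperbolic with respect to $\vec 1$. I would then split into the cases $c_3 = 0$ and $c_3 \neq 0$.

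In the case $c_3 = 0$, $p = e_1 \cdot q$ with $q = c_1 e_1^2 + c_2 e_2$ a quadratic, and since $p(\vec 1)\neq 0$ both factors are hyperbolic with respect to $\vec 1$ (a nonreal root of one factor along a line through $\vec 1$ would, together with its conjugate, be a nonreal root of $p$); hence $\Lambda_{\vec 1}(p) = \Lambda_{\vec 1}(e_1) \cap \Lambda_{\vec 1}(q)$. The first factor contributes a half-space. For the second, after possibly replacing $q$ by $-q$ so that $q(\vec 1) > 0$, hyperbolicity with respect to $\vec 1$ forces the symmetric matrix of $q$ to have at most one positive eigenvalue, so in suitable linear coordinates $q(y) = y_1^2 - y_2^2 - \dots - y_r^2$ and $\Lambda_{\vec 1}(q)$ is a second-order cone, which is spectrahedral via the standard arrow-matrix inequality. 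An intersection of spectrahedra is spectrahedral, which finishes this case.

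In the case $c_3 \neq 0$, I would introduce the family of maps $L_b \colon x \mapsto x + b\, e_1(x)\vec 1$, which commute with coordinate permutations, satisfy $L_b\vec 1 = (1+bn)\vec 1$, and are invertible precisely when $1 + bn \neq 0$; for such $b$, $\Lambda_{\vec 1}(p \circ L_b)$ is $\pm L_b^{-1}(\Lambda_{\vec 1}(p))$, so spectrahedrality of $\Lambda_{\vec 1}(p)$ is equivalent to that of $\Lambda_{\vec 1}(p \circ L_b)$. A short computation with the binomial identity \eqref{eq:norm_elem_expansion} shows that $p \circ L_b$ is again a cubic symmetric polynomial whose $\tilde e_3$-coefficient is unchanged (hence nonzero) and whose $\tilde e_1^3$-coefficient is a cubic polynomial in $b$ with leading coefficient a nonzero multiple of $p(\vec 1)$ and not vanishing at $b = -1/n$. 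I would pick $b$ to be a real root of this cubic --- necessarily with $1 + bn \neq 0$ --- so that the $\tilde e_1^3$-term disappears and $p \circ L_b$ becomes a nonzero multiple of $e_3 + c\, e_1 e_2$ for some $c \in \R$; if the root can be chosen so that $c \ge 0$, then $e_3 + c\,e_1 e_2 = e_3 + \ell e_2$ with $\ell = c\,e_1$ and $\ell(\vec 1) = cn \ge 0$, and \Cref{thm:spectrahedral} gives a spectrahedral representation of $\Lambda_{\vec 1}(p \circ L_b)$, hence of $\Lambda_{\vec 1}(p)$.

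The hard part will be justifying that a suitable root $b$ exists, i.e.\ that among the at most three real roots of the cubic above there is one for which the reduced coefficient $c$ is nonnegative. This is a root-location problem: setting $w = 1 + bn$, the attainable values of $c$ are governed by the intersection points of the cubic $w \mapsto p(\vec 1)\, w^3$ with a fixed line determined by $c_1, c_2, c_3$, and one must use the hyperbolicity inequality $p(\vec 1)\,(27 c_1 c_3^2 - c_2^3 - 9 c_2^2 c_3) \le 0$ satisfied by $p$ (equivalently, nonnegativity of $\disc_t(p(u + t\vec 1))$ for a coordinate vector $u$) to force at least one such point into the region $c \ge 0$. Granting this root count, the theorem follows from the two cases above together with \Cref{thm:spectrahedral}.
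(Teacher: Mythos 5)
Your overall approach matches the paper's: transform $p$ by the invertible, $S_n$-equivariant affine reparametrizations $x\mapsto x+be_1(x)\vec1$ (the paper writes them $T_a(x)=x-ae_1(x)\vec1$) so that the $\tilde e_1^3$-coefficient vanishes, and then invoke \Cref{thm:spectrahedral}. Your handling of the case $c_3=0$ (factoring out $e_1$ and treating the quadratic factor as a second-order cone) is a correct, and in fact more explicit, alternative to the paper's brief ``limiting argument'' for the degenerate situation where the reparametrization is not invertible.

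The genuine gap is exactly the step you flag as ``the hard part,'' and it is not a minor loose end --- it is the heart of the argument. You need not just \emph{some} real root $b$ of the cubic killing the $\tilde e_1^3$-term, but one for which the residual $\tilde e_1\tilde e_2$-coefficient has the same sign as the (unchanged) $\tilde e_3$-coefficient, and you give no criterion for selecting it. The paper's idea here is to tie the choice to the geometry of the cone: take $t$ to be the \emph{extremal} root of $s\mapsto p(u-s\vec1)$ for a coordinate vector $u$; then $u$ lands on $\partial\Lambda_{\vec1}(p\circ T_t)$, the remaining two roots of $(p\circ T_t)(u+s\vec1)$ are nonpositive, and one tries to read off the sign relation from the product-of-roots formula. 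This is the missing ingredient in your sketch. Be warned, though, that the sign step is genuinely delicate: the claim one wants, that the $\tilde e_3$- and $\tilde e_1\tilde e_2$-coefficients end up with equal sign, does \emph{not} follow just from $u$ lying on the boundary (one only gets that the $\tilde e_1\tilde e_2$-coefficient has the same sign as $p'(\vec1)$, not as the $\tilde e_3$-coefficient), and the paper's justification via ``$D_{\vec1}^2p'(u)=c_1$'' is not literally correct since $D_{\vec1}^2 p'(u)$ also involves $c_2$. For instance $p'=-\tilde e_3+10\,\tilde e_1\tilde e_2$ is hyperbolic, has $u\in\partial\Lambda_{\vec1}(p')$ with the other two roots negative, yet its two coefficients have opposite sign; so the extremal-root choice alone is not sufficient and one must select the correct root among the three candidates (in this example the minimal rather than maximal one produces $-\tilde e_3-\tfrac53\tilde e_1\tilde e_2$, which does have equal signs). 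So the ``root-location problem'' you identify is real, and the discriminant inequality you cite must in fact be brought to bear carefully; your sketch has the right shape but has not done that work.
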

\begin{proof}
    Consider the family of symmetric linear transformations defined by $T_{a}(x) = x - a e_1(x) \vec{1}$.
    Because $p$ is hyperbolic, there is some $t$ so that
    \[
        p(b - t\vec{1}) = 0, \text{ and }p(b - t'\vec{1}) \neq 0 \text{ for } t' > t.
    \]
    for any coordinate vector $b$.
    It suffices to show the result in the case when $T_t$ is invertible by a limiting argument.
    
    For this value of $t$, let $p' = p(T_t(x))$, which is also a symmetric hyperbolic cubic with $b \in \partial \Lambda_{\vec{1}} p'$.  It is clear that the hyperbolicity cone of $p'$ is linearly isomorphic to that of $p$, so in particular, if $\Lambda_{\vec{1}}p'$ is spectrahedral, so is $\Lambda_{\vec{1}} p$.
    
    We have that $p' = c_1 \tilde{e}_3 + c_2 \tilde{e}_1 \tilde{e}_2 + c_3 \tilde{e}_1^3$, for some $c_1, c_2, c_3 \in \R$ and $p'(b) = c_1 e_3(b) + c_2e_1(b)e_2(b) + c_3 e_1(b)^3 = c_3 = 0$.
    Hence, $p' = c_1 e_3 + c_2 e_1 e_2$.

    Because $b \in \Lambda_{\vec{1}}p'$, we also have that $D_{\vec{1}}p(b) = c_2$ has the same sign as $D_{\vec{1}}^2p(b) = c_1$. This implies that $p'$ has the same hyperbolicity cone as $c_1 e_3 + c_2 e_1 e_2$ for $c_1, c_2 \ge 0$, which we have established has a spectrahedral hyperbolicity cone in \Cref{thm:spectrahedral}.
\end{proof}

\section{Acknowledgements}
We would like to thank Mario Kummer for productive conversations.
\bibliographystyle{plain}
\bibliography{references.bib}

\begin{thebibliography}{10}

\bibitem{anari2016monte}
Nima Anari, Shayan~Oveis Gharan, and Alireza Rezaei.
\newblock Monte {C}arlo {M}arkov chain algorithms for sampling strongly
  {R}ayleigh distributions and determinantal point processes.
\newblock In {\em Conference on Learning Theory}, pages 103--115. PMLR, 2016.

\bibitem{anari2018log}
Nima Anari, Shayan~Oveis Gharan, and Cynthia Vinzant.
\newblock Log-concave polynomials, entropy, and a deterministic approximation
  algorithm for counting bases of matroids.
\newblock In {\em 2018 IEEE 59th Annual Symposium on Foundations of Computer
  Science (FOCS)}, pages 35--46. IEEE, 2018.

\bibitem{bauschke2001hyperbolic}
Heinz~H Bauschke, Osman G{\"u}ler, Adrian~S Lewis, and Hristo~S Sendov.
\newblock Hyperbolic polynomials and convex analysis.
\newblock {\em Canadian {J}ournal of {M}athematics}, 53(3):470--488, 2001.

\bibitem{borcea2009lee}
Julius Borcea and Petter Br{\"a}nd{\'e}n.
\newblock The {L}ee-{Y}ang and {P}{\'o}lya-{S}chur programs. {I}. {L}inear
  operators preserving stability.
\newblock {\em Inventiones {M}athematicae}, 177(3):541--569, 2009.

\bibitem{borcea2009polya}
Julius Borcea and Petter Br{\"a}nd{\'e}n.
\newblock P{\'o}lya-{S}chur master theorems for circular domains and their
  boundaries.
\newblock {\em Annals of {M}athematics}, pages 465--492, 2009.

\bibitem{branden2014hyperbolicity}
Petter Br{\"a}nd{\'e}n.
\newblock Hyperbolicity cones of elementary symmetric polynomials are
  spectrahedral.
\newblock {\em Optimization Letters}, 8(5):1773--1782, 2014.

\bibitem{branden2020lorentzian}
Petter Br{\"a}nd{\'e}n and June Huh.
\newblock Lorentzian polynomials.
\newblock {\em Annals of Mathematics}, 192(3):821--891, 2020.

\bibitem{gaarding1959inequality}
Lars G{\aa}rding.
\newblock An inequality for hyperbolic polynomials.
\newblock {\em Journal of Mathematics and Mechanics}, pages 957--965, 1959.

\bibitem{gurvits2006hyperbolic}
Leonid Gurvits.
\newblock Hyperbolic polynomials approach to {V}an der
  {W}aerden/{S}chrijver-{V}aliant like conjectures: sharper bounds, simpler
  proofs and algorithmic applications.
\newblock In {\em Proceedings of the thirty-eighth annual ACM symposium on
  Theory of computing}, pages 417--426, 2006.

\bibitem{helton2007linear}
J~William Helton and Victor Vinnikov.
\newblock Linear matrix inequality representation of sets.
\newblock {\em Communications on Pure and Applied Mathematics: A Journal Issued
  by the Courant Institute of Mathematical Sciences}, 60(5):654--674, 2007.

\bibitem{kummer2019connectivity}
Mario Kummer.
\newblock On the connectivity of the hyperbolicity region of irreducible
  polynomials.
\newblock {\em Advances in Geometry}, 19(2):231--233, 2019.

\bibitem{kummer2021spectral}
Mario Kummer.
\newblock Spectral linear matrix inequalities.
\newblock {\em Advances in Mathematics}, 384:107749, 2021.

\bibitem{kummer2015hyperbolic}
Mario Kummer, Daniel Plaumann, and Cynthia Vinzant.
\newblock Hyperbolic polynomials, interlacers, and sums of squares.
\newblock {\em Math. Program.}, 153(1, Ser. B):223--245, 2015.

\bibitem{legat2017sos}
Beno{\^\i}t Legat, Chris Coey, Robin Deits, Joey Huchette, and Amelia Perry.
\newblock {Sum-of-squares optimization in Julia}.
\newblock In {\em The First Annual JuMP-dev Workshop}, 2017.

\bibitem{marcus2015interlacing}
Adam~W Marcus, Daniel~A Spielman, and Nikhil Srivastava.
\newblock Interlacing families {II}: {M}ixed characteristic polynomials and the
  {K}adison—{S}inger problem.
\newblock {\em Annals of Mathematics}, pages 327--350, 2015.

\bibitem{nuij1968a}
Wim Nuij.
\newblock A note on hyperbolic polynomials.
\newblock {\em Math. Scand.}, 23:69--72, 1968.

\bibitem{obreschkoff1963verteilung}
Nikola Obreschkoff.
\newblock {\em Verteilung und berechnung der {N}ullstellen reeller {P}olynome}.
\newblock VEB Deutscher Verlag der Wissenschaften, 1963.

\bibitem{RIENER2012850}
Cordian Riener.
\newblock On the degree and half-degree principle for symmetric polynomials.
\newblock {\em Journal of Pure and Applied Algebra}, 216(4):850--856, 2012.

\bibitem{rosset1989normalized}
Shmuel Rosset.
\newblock Normalized symmetric functions, {N}ewton's inequalities, and a new
  set of stronger inequalities.
\newblock {\em The American Mathematical Monthly}, 96(9):815--819, 1989.

\bibitem{sanyal2013derivative}
Raman Sanyal.
\newblock On the derivative cones of polyhedral cones.
\newblock {\em Advances in Geometry}, 13(2):315--321, 2013.

\bibitem{saunderson2019certifying}
James Saunderson.
\newblock Certifying polynomial nonnegativity via hyperbolic optimization.
\newblock {\em SIAM J. Appl. Algebra Geom.}, 3(4):661--690, 2019.

\bibitem{schur1914zwei}
Isaai Schur and George Polya.
\newblock {\"U}ber zwei arten von faktorenfolgen in der theorie der
  algebraischen gleichungen.
\newblock 1914.

\bibitem{timofte2003positivity}
Vlad Timofte.
\newblock On the positivity of symmetric polynomial functions.: {P}art {I}:
  {G}eneral results.
\newblock {\em Journal of Mathematical Analysis and Applications},
  284(1):174--190, 2003.

\bibitem{zinchenko2008hyperbolicity}
Yuriy Zinchenko.
\newblock On hyperbolicity cones associated with elementary symmetric
  polynomials.
\newblock {\em Optimization Letters}, 2(3):389, 2008.

\end{thebibliography}
\end{document}